\documentclass[12pt]{amsart}
\usepackage{amsmath,amssymb}
\usepackage[utf8,utf8x]{inputenc}
\usepackage[pagebackref=true,
pdftitle = {Strongly\ symmetric\ smooth\ toric\ varieties},
pdfkeywords = {arrangement\ of\ hyperplanes; reflection; toric\ variety; fan},
pdfsubject = {20F55; 52C35; 52B20; 14M25},
pdfauthor = {}]
{hyperref}
\usepackage{enumerate}
\usepackage{longtable}
\usepackage{epsfig,graphics}
\usepackage{pict2e}

\newtheorem{propo}{Proposition}[section]
\newtheorem{corol}[propo]{Corollary}
\newtheorem{theor}[propo]{Theorem}
\newtheorem{lemma}[propo]{Lemma}

\theoremstyle{definition}
\newtheorem{defin}[propo]{Definition}
\newtheorem{examp}[propo]{Example}

\theoremstyle{remark}
\newtheorem{remar}[propo]{Remark}

\numberwithin{equation}{section}

\newcommand{\NN }{\mathbb{N}}
\newcommand{\CC }{\mathbb{C}}
\newcommand{\RR }{\mathbb{R}}

\newcommand{\ZZ }{\mathbb{Z}}
\newcommand{\FF }{\mathbb{F}}
\newcommand{\PP }{\mathbb{P}}

\newcommand{\Ac }{\mathcal{A}}

\newcommand{\Kc }{\mathcal{K}}

\newcommand{\Wc }{\mathcal{W}}
\newcommand{\Cc }{\mathcal{C}}

\DeclareMathOperator{\Aut}{Aut}

\DeclareMathOperator{\Hom}{Hom}

\DeclareMathOperator{\Star}{Star}
\DeclareMathOperator{\Supp}{Supp}
\DeclareMathOperator{\orb}{orb}
\DeclareMathOperator{\Pic}{Pic}

\newcommand{\stsym}{strongly symmetric}
\newcommand{\csym}{centrally symmetric}

\title[Strongly symmetric smooth toric varieties]
{Strongly symmetric smooth toric varieties}

\author{M.~Cuntz}
\address{Michael Cuntz,
Fachbereich Mathematik,
Universit\"at Kai\-sers\-lau\-tern,
Postfach 3049,
D-67653 Kaiserslautern, Germany}
\email{cuntz@mathematik.uni-kl.de}

\author{Y.~Ren}
\address{Yue Ren,
Fachbereich Mathematik,
Universit\"at Kai\-sers\-lau\-tern,
Postfach 3049,
D-67653 Kaiserslautern, Germany}
\email{ren@mathematik.uni-kl.de}

\author{G.~Trautmann}
\address{G\"unther Trautmann,
Fachbereich Mathematik,
Universit\"at Kai\-sers\-lau\-tern,
Postfach 3049,
D-67653 Kaiserslautern, Germany}
\email{trm@mathematik.uni-kl.de}

\begin{document}

\begin{abstract}
We investigate toric varieties defined by arrangements of hyperplanes and call them \stsym.
The smoothness of such a toric variety translates to the fact that the arrangement is crystallographic.
As a result, we obtain a complete classification of this class of toric varieties.
Further, we show that these varieties are projective and describe
associated toric arrangements in these varieties.
\subjclass{20F55 \and 52C35 \and 52B20 \and 14M25}
\keywords{arrangement of hyperplanes \and reflection \and toric variety \and fan}
\end{abstract}

\maketitle

\section{Introduction}

Toric varieties associated to root systems have been considered,
investigated and used in several papers 
\cite{CP83}, \cite{VK85}, \cite{Pro90}, \cite{DL94}, \cite{Kly95}, \cite{BJ08}.
The so called crystallographic arrangements are generalizations
of the classical root systems and their Weyl chamber structure.
In this paper we establish a one to one correspondence between crystallographic
arrangements and toric varieties which are smooth and projective,
and which have the property of being \stsym, see Def.\ \ref{def:stsym},
a property which has not been used in the previous papers mentioned above.

Crystallographic arrangements have originally been used in the theory of pointed Hopf algebras:
Classical Lie theory leads to the notion of Weyl groups which are special reflection groups
characterized by a certain integrality and which are therefore also
called \emph{crystallographic} reflection groups.
A certain generalization of the universal enveloping algebras of Lie algebras
yields Hopf algebras to which one can associate \emph{root systems} and \emph{Weyl groupoids}
(see \cite{p-H-06}, \cite{p-HS-08}, \cite{p-AHS-08}). The case of \emph{finite Weyl groupoids}
has recently been treated including a complete classification in a series of papers
\cite{p-CH09a}, \cite{p-CH09b}, \cite{p-CH09d}, \cite{p-CH09c}, \cite{p-CH10}.

The theorems needed for the classification reveal an astonishing connection:
It turns out that finite Weyl groupoids correspond to certain simplicial
arrangements called \emph{crystallographic} \cite{p-C10}:
Let $\Ac $ be a simplicial arrangement of finitely many real hyperplanes in a
Euclidean space $V$ and let $R$ be a
set of nonzero covectors such that $\Ac =\{\alpha ^\perp \,|\,\alpha \in R\}$.
Assume that $\RR\alpha \cap R=\{\pm\alpha\}$ for all $\alpha\in R$.
The pair $(\Ac ,R)$ is called crystallographic, see \cite[Def.\ 2.3]{p-C10} or Def.\ \ref{def:cryarr},
if for any chamber $K$ the elements of $R$
are integer linear combinations of the covectors defining the walls of $K$.
For example, crystallographic Coxeter groups give rise to crystallographic
arrangements in this sense, but there are many other.

Thus the main feature of crystallographic arrangements is the integrality.
But integrality is also the fundamental property of a fan in toric geometry.
Indeed, the set of closed chambers of a rational simplicial arrangement is a fan
which is \stsym.
A closer look reveals that the property \emph{crystallographic} corresponds
to the smoothness of the variety.
We obtain (see Thm.\ \ref{thmcor}):
\begin{theor}
There is a one to one correspondence between crystallographic arrangements
and \stsym\ smooth complete fans.
\end{theor}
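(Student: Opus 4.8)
The plan is to make the correspondence of Thm.~\ref{thmcor} explicit by constructing the two maps and showing they are mutually inverse, the whole point being to translate the integrality condition \emph{crystallographic} into the lattice-theoretic condition \emph{smooth}. Throughout, the covectors in $R$ live in the dual space $V^*$, the chambers live in $V$, and the bridge between the two descriptions is the pairing together with the choice of lattice $M:=\ZZ R\subseteq V^*$ and its dual $N:=M^*\subseteq V$.

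Given a crystallographic arrangement $(\Ac,R)$, I would associate the fan $\Sigma(\Ac)$ consisting of the closed chambers of $\Ac$ and all their faces, taken with respect to the lattice $N$. Completeness is immediate because the chambers tile $V$, and each maximal cone is simplicial because $\Ac$ is a simplicial arrangement; that $\Sigma(\Ac)$ is \stsym\ is built into the construction, since by definition its walls lie on the hyperplanes of $\Ac$ (Def.~\ref{def:stsym}). The essential point is smoothness. Fix a chamber $K$ with wall covectors $\alpha_1,\dots,\alpha_n\in R$; these are linearly independent, and the crystallographic condition says exactly that $R\subseteq\sum_i\ZZ\alpha_i$. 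Since each $\alpha_i\in R$, this forces $\sum_i\ZZ\alpha_i=\ZZ R=M$, so $\alpha_1,\dots,\alpha_n$ is a $\ZZ$-basis of $M$. Dualizing, the primitive generators of the rays of $K$ form the dual basis of $N$, hence a $\ZZ$-basis, which is precisely the statement that $K$ is a smooth cone. As $K$ was arbitrary, $\Sigma(\Ac)$ is smooth.

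For the converse I would start from a \stsym\ smooth complete fan $\Sigma$ in $V$ with lattice $N$, set $M:=N^*$, and recover the arrangement as follows. Strong symmetry guarantees that the codimension-one cones of $\Sigma$ assemble into a finite set of linear hyperplanes $\Ac$, and for $R$ I take, for each such hyperplane, its two primitive normal covectors in $M$; this yields $\RR\alpha\cap R=\{\pm\alpha\}$ automatically. Smoothness forces every maximal cone to be simplicial, so $\Ac$ is a simplicial arrangement whose chambers are the maximal cones of $\Sigma$. To see that $(\Ac,R)$ is crystallographic, fix a chamber $K$: smoothness says the primitive ray generators of $K$ are a $\ZZ$-basis of $N$, so the dual wall covectors $\alpha_1,\dots,\alpha_n$ are a $\ZZ$-basis of $M$; since every element of $R$ lies in $M$ by construction and $M=\sum_i\ZZ\alpha_i$, each element of $R$ is an integer combination of the walls of $K$, which is precisely the crystallographic condition.

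It remains to check that the two constructions are mutually inverse, and here the only subtlety — which I expect to be the main obstacle — is matching the lattices and the normalizations. One must verify that $\ZZ R$ really is the dual of the ray lattice and that the $\alpha\in R$ are exactly the primitive normals, so that passing arrangement $\to$ fan $\to$ arrangement returns the same $R$; this uses that in a simplicial arrangement every hyperplane of $\Ac$ occurs as a wall of some chamber, whence (by the basis argument above) every $\alpha\in R$ is primitive in $M$. Granting this, the hyperplanes and chambers are visibly preserved in both directions, the lattices are determined by $R$ and by $N=M^*$ respectively, and the two maps compose to the identity. The genuinely new input over the classical toric dictionary is thus concentrated in the clean equivalence ``the walls of each chamber form a $\ZZ$-basis $\iff$ crystallographic integrality,'' with strong symmetry supplying the hyperplane structure that lets one speak of an arrangement at all.
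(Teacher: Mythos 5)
Your proposal is correct and follows essentially the same route as the paper (Lemmas \ref{lem1} and \ref{lem2}): in both directions the key step is the equivalence between the crystallographic integrality condition and the wall covectors $B^K$ of each chamber forming a $\ZZ$-basis of $M=\ZZ R$, which dualizes to smoothness of the chamber cones over $N=M^*$. The only cosmetic difference is that in the reverse direction you define $R$ as the set of primitive normal covectors of the hyperplanes rather than as the union of the dual bases $B^{K_\sigma}$ over all maximal cones, and these are the same set, as the paper's own primitivity argument shows.
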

Thus the classification of finite Weyl groupoids \cite{p-CH10} gives:
\begin{corol}
Any \stsym\ smooth complete toric variety is isomorphic to a product of
\begin{enumerate}
\item varieties of dimension two corresponding to triangulations of a
convex $n$-gon by non-intersecting diagonals (see Section \ref{ranktwo}),
\item varieties of dimension $r>2$ corresponding to the reflection arrangements of type $A_r$, $B_r$, $C_r$
and $D_r$, or out of a series of $r-1$ further varieties,
\item $74$ further ``sporadic'' varieties.
\end{enumerate}
\end{corol}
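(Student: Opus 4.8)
The plan is to transport the whole problem through the correspondence of Theorem \ref{thmcor} and then read off the answer from the already completed classification of finite Weyl groupoids. Concretely, a \stsym\ smooth complete toric variety $X$ is determined up to isomorphism by its fan, and by Theorem \ref{thmcor} that fan is the fan of closed chambers of a crystallographic arrangement $(\Ac,R)$. Thus the corollary splits into two tasks: (i) understanding how a product decomposition of $X$ is reflected on the level of arrangements, and (ii) invoking the classification of the irreducible crystallographic arrangements.

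For step (i) I would first recall the standard fact that a toric variety is a product $X_1\times X_2$ precisely when its fan splits as a direct sum $\Sigma_1\oplus\Sigma_2$ relative to a decomposition $V=V_1\oplus V_2$ of the ambient space, with each $\Sigma_i$ supported in $V_i$. I would then match this splitting of fans with the notion of a reducible (product) arrangement: $(\Ac,R)$ is a product $(\Ac_1,R_1)\times(\Ac_2,R_2)$ iff $V=V_1\oplus V_2$ so that every hyperplane of $\Ac$ has the form $H_1\oplus V_2$ or $V_1\oplus H_2$. The point to verify carefully is that the integrality condition defining \emph{crystallographic} in Def.\ \ref{def:cryarr} both descends to and is assembled from the two factors, so that $X$ is indecomposable as a product exactly when the associated arrangement is irreducible.

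With (i) in hand it remains to list the irreducible crystallographic arrangements, which is exactly the content of the classification of finite Weyl groupoids \cite{p-CH10} transported through the dictionary of \cite{p-C10}. In rank two each such arrangement corresponds to a triangulation of a convex $n$-gon by non-intersecting diagonals, yielding case (1) (see Section \ref{ranktwo}); in rank $r>2$ the classification produces the reflection arrangements of types $A_r$, $B_r$, $C_r$, $D_r$ together with a further series of $r-1$ arrangements, yielding case (2); and there remain exactly $74$ sporadic examples, yielding case (3).

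The main obstacle I anticipate is step (i): making the dictionary between ``product of toric varieties'' and ``reducible crystallographic arrangement'' precise, and checking that irreducibility on the arrangement side corresponds exactly to indecomposability of the variety as a product. Once this is settled, the remainder is a direct appeal to the classification, with the rank two case handled separately through its combinatorial description by triangulations.
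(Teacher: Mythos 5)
Your proposal is correct and is essentially the paper's own argument: the paper proves this corollary simply by combining the correspondence of Theorem \ref{thmcor} with the classification of finite Weyl groupoids in \cite{p-CH10}. The product/reducibility dictionary you flag as the main obstacle (fan splits $\Sigma=\Sigma_1\times\Sigma_2$ versus reducible arrangements $\Ac=\Ac_1\times\Ac_2$) is indeed needed and is exactly what the paper records in its ``Further remarks'' section on reducibility, using Lemma \ref{lemrestr} to see that the factors are again \stsym\ and smooth.
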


To each crystallographic arrangement $\Ac$, we construct a polytope $P$
such that the toric variety of $P$ is isomorphic to the toric variety
corresponding to $\Ac$. Thus we obtain that the variety is projective,
see Section \ref{projectivity}.
Further, the strong symmetry of the fan $\Sigma$ associated to
$\Ac$
gives rise to a system $\{Y^E\}_{E\in L(\Ac)}$ of
smooth \stsym\ toric varieties $Y^E\subseteq X_\Sigma$
(here $L(\Ac)$ is the poset of intersections of hyperplanes of $\Ac$).
This system mirrors the arrangement $\Ac$ in $X_\Sigma$ in a remarkable
way, see Section \ref{torarr}, and will be called the associated toric
arrangement.
The intersections $Y^H\cap T$ with the torus $T$ of $X_\Sigma$ for
$H\in \Ac$ are subtori of $T$ and form a toric arrangement.

This note is organized as follows. After recalling the notions
of fans and arrangements of hyperplanes in Section \ref{prel},
we collect some results on \stsym\ fans in Section \ref{stsymfans}.
We then prove the main theorem (the correspondence) in Section
\ref{corresp}. In Section \ref{projectivity} we construct a
polytope for each crystallographic arrangement.
In Section \ref{ranktwo} we compare the well-known classifications
of smooth complete surfaces (specified for the \csym\ case) and
the corresponding arrangements of rank two.
In the following section we discuss the toric arrangements
associated to the crystallographic arrangements.
The last section consists of further remarks on irreducibility,
blowups, and automorphisms.

\vspace{\baselineskip}
\textbf{Acknowledgement.} We would like to thank M.\ Brion
for helpful remarks and hints to literature.

\section{Preliminaries}\label{prel}

Let us first recall the notions of hyperplane arrangements and of
fans for normal toric varieties.

For subsets $A$ in a real vector space $V$ of dimension $r$ and
a subset $B$ of its dual $V^*$ we set
\begin{eqnarray*}
A^\perp&=&\{b\in V^* \mid b(a) = 0 \; \forall \, a \in A \}, \\
B^\vee&=&\{a\in V \mid b(a) \geq 0 \; \forall \, b \in B \},\\
B^\perp&=&\{a\in V \mid b(a) = 0 \; \forall \, b \in B \}.
\end{eqnarray*}

An \emph{open} or \emph{closed simplicial cone} $\sigma$ is a subset $\sigma\subseteq V$ such
that there exist linearly independent $n_1,\ldots,n_d$, $d\in\NN$ with
\begin{eqnarray*}
&& \sigma = \langle n_1,\ldots,n_d\rangle_{\RR_{>0}}:=\RR_{>0} n_1+\hdots+\RR_{>0} n_d\\
\text{or} && \sigma = \langle n_1,\ldots,n_d\rangle_{\RR_{\ge 0}}:=\RR_{\ge 0} n_1+\hdots+\RR_{\ge 0} n_d
\end{eqnarray*}
respectively.

\subsection{Fans and toric varieties}
Given a lattice $N$ in $V$ of rank $r$, its dual lattice
$M = \Hom(N,\ZZ)$ is viewed as lattice in $V^*$.
A subset $\sigma\subseteq V$ is called a (closed) \emph{strongly convex rational polyhedral cone}
if there exist $n_1,\ldots,n_d \in N$ such that
\[ \sigma=\langle n_1,\ldots,n_d\rangle_{\RR_{\geq 0}} \quad
    \text{and} \quad \sigma\cap-\sigma=\{0\}. \]
We say that $n_1,\ldots,n_d$ are \emph{generators} of $\sigma$.
By abuse of notation we will call such a cone simply an ``$N$-cone''.\\
We call $\sigma$ \emph{simplicial}, if $\sigma$ is a closed simplicial cone.
If $\sigma$ is generated by a subset of a $\ZZ$-basis of $N$,
then we say that $\sigma$ is \emph{smooth}.\\
Let $\sigma$ be an $N$-cone. We write
$\langle\sigma\rangle_\RR:=\sigma+(-\sigma)$
for the subspace spanned by $\sigma$.
The \emph{dimension} $\dim(\sigma)$ of $\sigma$ is the dimension of $\langle\sigma\rangle_\RR$.

Identifying $N_\RR=N\otimes_\ZZ\RR$ with $V$, we consider fans
$\Sigma$ in $N_\RR$ of strongly convex rational polyhedral cones as defined
in the standard theory of toric varieties, see \cite{Oda}, \cite{CLS}:

A \emph{face} of $\sigma$ is the intersection of $\sigma$ with a supporting hyperplane,
$\sigma \cap m^\perp$, $m \in V^*$, $m(a)\ge 0$ for all $a\in\sigma$.
Faces of codimension $1$ are called \emph{facets}.\\
A \emph{fan} in $N$ is a nonempty collection of $N$-cones $\Sigma$ such that
\begin{enumerate}
\item any face $\tau$ of a cone $\sigma \in \Sigma$ is contained in $\Sigma$,
\item any intersection $\sigma_1 \cap \sigma_2$ of two cones $\sigma_1,\sigma_2\in\Sigma$ is a face of
$\sigma_1$ and $\sigma_2$.
\end{enumerate}

For $k\in\NN$ we write $\Sigma(k)=\{\sigma\in\Sigma\mid \dim(\sigma)=k\}$.
For $S\subseteq\Sigma$ we write $\Supp S=\bigcup_{\sigma\in S}\sigma$
for the \emph{support} of $S$.

The fan $\Sigma$ and its associated toric variety $X_\Sigma$
(over the ground field $\CC$) is called \emph{simplicial} if any cone of $\Sigma$
is simplicial. It is well-known that $X_\Sigma$ for finite $\Sigma$ is nonsingular (smooth)
if and only if each cone $\sigma$ of $\Sigma$ is smooth. Moreover,
$X_\Sigma$ is complete (compact) if and only if $\Sigma$ is finite
and $\Supp \Sigma=N_\RR$.

\subsection{Crystallographic arrangements}\label{A_R}
Let $\Ac$ be a simplicial arrangement in $V=\RR^r$, i.e.\ 
$\Ac=\{H_1,\ldots,H_n\}$ where $H_1,\ldots,H_n$ are distinct linear hyperplanes in $V$
and every component of $V\smallsetminus\bigcup_{H\in\Ac} H$ is an open simplicial cone.
Let $\Kc(\Ac)$ be the set of connected components of $V\smallsetminus \bigcup_{H\in\Ac} H$;
they are called the {\it chambers} of $\Ac$.

For each $H_i$, $i=1,\ldots,n$ we choose an element $x_i\in V^*$ such that $H_i=x_i^\perp$.
Let
\[ R = \{\pm x_1,\ldots,\pm x_n\}\subseteq V^*. \]
For each chamber $K\in\Kc(\Ac)$ set
\begin{eqnarray*}
W^K &=& \{ H\in\Ac \mid \dim(H\cap \overline K) = r-1 \}, \\
B^K &=& \{ \alpha\in R \mid \alpha^\perp\in W^K,\quad
\{\alpha\}^\vee \cap K = K \} \subseteq R.
\end{eqnarray*}
Here, $\overline{K}$ denotes the closure of $K$.
The elements of $W^K$ are the {\it walls} of $K$ and
$B^K$ ``is'' the set of normal vectors of the walls of $K$ pointing to the inside.
Note that
\[ \overline K = \bigcap_{\alpha\in B^K} \{\alpha\}^\vee, \]
and that $B^K$ is a basis of $V^*$ because $\Ac$ is simplicial.
Moreover, if $\alpha^\vee_1,\ldots,\alpha^\vee_r$ is the dual basis to
$B^K=\{\alpha_1,\ldots,\alpha_r\}$, then
\begin{equation}\label{simp_cone}
K = \Big\{ \sum_{i=1}^r a_i\alpha_i^\vee \mid a_i> 0 \quad\mbox{for all}\quad
i=1,\ldots,r \Big\}.
\end{equation}

\begin{defin}\label{def:cryarr}
Let $\Ac$ be a simplicial arrangement and $R\subseteq V^*$ a finite set
such that $\Ac = \{ \alpha^\perp \mid \alpha \in R\}$ and $\RR\alpha\cap R=\{\pm \alpha\}$
for all $\alpha \in R$.
For $K\in\Kc(\Ac)$ set
\[ R^K_+ = R \cap \sum_{\alpha \in B^K} \RR_{\ge 0} \alpha. \]
We call $(\Ac,R)$ a \emph{crystallographic arrangement} if
for all $K\in\Kc(\Ac)$:
\begin{itemize}
\item[(I)] \quad $R \subseteq \sum_{\alpha \in B^K} \ZZ \alpha$.
\end{itemize}
\end{defin}
\begin{remar}
Notice that one can prove that in fact if $(\Ac,R)$ is crystallographic,
then $R \subseteq \pm\sum_{\alpha \in B^K} \NN_0 \alpha$ (see \cite{p-C10}).
\end{remar}

\section{Strong symmetry of fans}\label{stsymfans}

\begin{defin}\label{def:stsym}
We call a fan $\Sigma$ in $V$ \emph{\stsym} if it is complete and
if there exist hyperplanes $H_1,\ldots,H_n$ in $V$ such that
\[\Supp \Sigma(r-1)=H_1\cup \ldots\cup H_n.\]
We write $\Ac(\Sigma):=\{H_1,\ldots,H_n\}$.
We call a toric variety $X_\Sigma$ \emph{\stsym} if $\Sigma$ is \stsym.

We call a fan $\Sigma$ \emph{\csym} if $\Sigma = -\Sigma$.
We call a toric variety $X_\Sigma$ \emph{\csym} if $\Sigma$ is \csym.
\end{defin}

\begin{remar}
One could also call a \stsym\ fan \emph{strongly complete}
because for any $\tau\in\Sigma$ the collection of $\sigma\cap\langle\tau\rangle_\RR$,
$\sigma\in\Sigma$, is a complete fan in $\langle\tau\rangle_\RR$ as a subfan
of $\Sigma$.
\end{remar}

\begin{lemma}\label{tauHi}
Let $\tau$ be an $(r-1)$-dimensional cone in $\RR^r$ and
$H_1,\ldots,H_n$ be hyperplanes in $\RR^r$.
If $\tau\subseteq H_1\cup\ldots\cup H_n$, then $\tau\subseteq H_i$
for some $1\le i\le n$.
\end{lemma}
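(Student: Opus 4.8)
The plan is to reduce everything to the linear span of $\tau$ and then invoke the fact that a finite union of proper subspaces cannot swallow an open set. First I would set $L:=\langle\tau\rangle_\RR$, which by hypothesis is an $(r-1)$-dimensional linear subspace of $\RR^r$. Since $\tau$ is a (convex) cone whose span is exactly $L$, it is full-dimensional in $L$, so its relative interior $U$ — the interior of $\tau$ taken inside the subspace $L$ with its own topology — is nonempty and open in $L$. From $\tau\subseteq H_1\cup\dots\cup H_n$ and $\tau\subseteq L$ I immediately get the inclusion $U\subseteq\bigcup_{i=1}^n (H_i\cap L)$, which is the object I actually want to analyze.

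Next I would run a dichotomy on each intersection $H_i\cap L$. Since each $H_i$ is a hyperplane (an $(r-1)$-dimensional subspace of $\RR^r$), either $L\subseteq H_i$, in which case $H_i\cap L=L$, or $L\not\subseteq H_i$, in which case $H_i\cap L$ is a proper subspace of $L$ and hence $\dim(H_i\cap L)\le r-2<\dim L$. Let $I=\{\,i\mid L\subseteq H_i\,\}$ and $J=\{\,i\mid L\not\subseteq H_i\,\}$. If $I\ne\emptyset$ I am done: choosing $i\in I$ gives $\tau\subseteq L\subseteq H_i$, which is exactly the conclusion. So the whole content of the lemma is that the case $I=\emptyset$ is impossible.

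The hard part, then, is ruling out $I=\emptyset$, where the inclusion becomes $U\subseteq\bigcup_{j\in J}(H_j\cap L)$ with every $H_j\cap L$ a proper subspace of $L$. Here I would use the standard fact that a finite union of proper linear subspaces of $L$ has empty interior in $L$: each such subspace is closed and nowhere dense (having measure zero in $L$, or, equivalently, being nowhere dense closed so that the Baire category theorem applies to the finite union). A nonempty open set $U$ cannot be contained in a set with empty interior, giving a contradiction. Hence $I\ne\emptyset$ and the lemma follows. I expect this covering step to be the only genuinely substantive point; everything preceding it is a matter of correctly passing to $L$ and noting that an $(r-1)$-dimensional cone carries a nonempty relatively open subset of its span.
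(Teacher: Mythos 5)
Your argument is correct, but it follows a genuinely different route from the paper's. You pass to the span $L=\langle\tau\rangle_\RR$, note that the convex cone $\tau$ is full-dimensional in $L$ and hence has nonempty relative interior, and then rule out the bad case by the topological fact that a finite union of proper linear subspaces of $L$ is nowhere dense (so it cannot contain a nonempty open subset of $L$); the dichotomy $L\subseteq H_i$ versus $\dim(H_i\cap L)\le r-2$ then forces $L\subseteq H_i$ for some $i$. The paper argues combinatorially instead: it inductively builds a set of $(r-1)n$ points of $\tau$ in general position (every $r-1$ of them linearly independent), and by pigeonhole some $H_i$ must contain $r-1$ of these points, hence contains $\langle\tau\rangle_\RR$ and therefore $\tau$. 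The two proofs rest on the same underlying principle --- a convex cone spanning $L$ cannot be covered by finitely many proper subspaces of $L$ --- which the paper also uses implicitly each time it selects a new point $w\in\tau\smallsetminus\bigcup_{U\in\Xi_i}U$. Your version is shorter and isolates that covering lemma cleanly, at the price of invoking topology (or measure); the paper's version is purely linear-algebraic and combinatorial, at the price of the technical general-position construction. One point worth making explicit: convexity of $\tau$ is essential in both proofs and in the statement itself (for a non-convex cone, e.g.\ a union of two rays spanning a plane in $\RR^3$ with each ray lying in a different hyperplane, the conclusion fails), so your parenthetical ``(convex)'' is carrying real weight, not just recalling notation.
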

\begin{proof}
We construct inductively sets $T_i\subseteq \tau$ with $i+r-1$ elements such that each
subset $B$, $|B|=r-1$, is linearly independent:
Let $T_0:=\{n_1,\ldots,n_{r-1}\}$,
where $n_1,...,n_{r-1}\in\tau$ are linearly independent and span $\langle\tau\rangle_\RR$.
Given $T_i$, let
\[ \Xi_i:=\{\langle v_1,\ldots,v_{r-2}\rangle \mid v_1,\ldots,v_{r-2}\in T_i\} \]
be the set of subspaces generated by $r-2$ elements of $T_i$.
Since $\tau$ has dimension $r-1$, $\bigcup_{U\in\Xi_i} U\ne \langle\tau\rangle_\RR$.
For any $w\in \tau\smallsetminus \bigcup_{U\in\Xi_i} U$, $T_{i+1}:=T_i\cup\{w\}$
has the required property.

Now consider the $(r-1)n$ elements of $T_{(r-1)(n-1)}$.
Let $\ell$ be the maximal number of elements in any $H_i$. Then $\ell\ge r-1$.
Then there is an $1\le i\le n$ such that $r-1$ of these elements lie in $H_i$.
These are linearly independent and belong to $\tau$,
so $\tau\subseteq\langle\tau\rangle_\RR\subseteq H_i$.
\end{proof}

\begin{lemma}\label{stsymHi}
Let $\Sigma$ be an $r$-dimensional fan. Then the following are equivalent:
\begin{enumerate}
\item\label{p1} $\Sigma$ is complete, and for all $\tau\in\Sigma(r-1)$, $\sigma\in\Sigma$,
\[ \sigma\cap \langle\tau\rangle_\RR \in \Sigma, \]
\item the fan $\Sigma$ is \stsym.
\end{enumerate}
\end{lemma}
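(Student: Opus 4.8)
The plan is to prove the two implications separately, in both cases exploiting the elementary observation that for a hyperplane $H$ through the origin, the intersection $\sigma\cap H$ is a face of a cone $\sigma$ precisely when $H$ is a supporting hyperplane of $\sigma$, i.e.\ when $H$ meets $\sigma$ only along its boundary. The bridge between the two formulations is the \emph{slice} $\{\sigma\cap H\mid\sigma\in\Sigma\}$ of $\Sigma$ along such a hyperplane.

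For the implication (2)$\Rightarrow$(1) I would start from a \stsym\ $\Sigma$ and a wall $\tau\in\Sigma(r-1)$, and set $H:=\langle\tau\rangle_\RR$. Since $\tau\subseteq\Supp\Sigma(r-1)=H_1\cup\cdots\cup H_n$ and $\dim\tau=r-1$, Lemma \ref{tauHi} gives $\tau\subseteq H_i$ for some $i$, whence $H=H_i\subseteq\Supp\Sigma(r-1)$. The key point is then that $H$ cannot meet the relative interior of any maximal cone $\sigma\in\Sigma(r)$: a point of $\mathrm{int}(\sigma)\cap H$ would lie in $\Supp\Sigma(r-1)$, hence in some wall $\tau'$, but the relative interiors of the cones of a fan are pairwise disjoint and $\mathrm{int}(\sigma)$ meets no proper face of $\sigma$ and no other cone, a contradiction. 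Consequently $\mathrm{int}(\sigma)$ lies in one of the open half-spaces bounded by $H$, so $H$ is a supporting hyperplane and $\sigma\cap H$ is a face of $\sigma$, hence a cone of $\Sigma$. For a general $\sigma$, writing it as a face of some maximal cone $\sigma'$ and using $\sigma\cap H=\sigma\cap(\sigma'\cap H)$ reduces to the second fan axiom, since $\sigma'\cap H\in\Sigma$.

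For the converse (1)$\Rightarrow$(2), completeness forces $\Sigma$ to be finite, so only finitely many hyperplanes $H_1,\dots,H_n$ arise as spans $\langle\tau\rangle_\RR$ of walls $\tau\in\Sigma(r-1)$. The inclusion $\Supp\Sigma(r-1)\subseteq H_1\cup\cdots\cup H_n$ is immediate. For the reverse inclusion, fix $H_j=\langle\tau_0\rangle_\RR$ and consider the slice: hypothesis (1) says every $\sigma\cap H_j=\sigma\cap\langle\tau_0\rangle_\RR$ is a cone of $\Sigma$, and these cones form a subfan $\Sigma_j$ of $\Sigma$ lying in $H_j$ with support $\Supp\Sigma\cap H_j=H_j$ by completeness. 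Thus $\Sigma_j$ is a complete fan in the $(r-1)$-dimensional space $H_j$, and I would finish by the dimension argument that its $(r-1)$-dimensional cones already cover $H_j$: the union of the cones of $\Sigma_j$ of dimension $<r-1$ is a finite union of nowhere dense sets and hence has empty interior in $H_j$, so it cannot cover the open complement. Since the maximal cones of $\Sigma_j$ lie in $\Sigma(r-1)$, this yields $H_j\subseteq\Supp\Sigma(r-1)$.

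I expect the main obstacle to be the forward direction's claim that $H$ is a supporting hyperplane of each maximal cone, which is where \stsym\ is genuinely used via Lemma \ref{tauHi}; once $\sigma\cap H$ is identified as a face the rest is bookkeeping with the fan axioms. The only other point requiring care is the purely topological covering argument in the converse, which rests on the finiteness of $\Sigma$ and on the fact that cones of dimension $<r-1$ are nowhere dense in an $(r-1)$-dimensional subspace.
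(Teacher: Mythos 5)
Your proposal is correct and takes essentially the same route as the paper: for (1)$\Rightarrow$(2) you form the slice fan along $\langle\tau\rangle_\RR$, note it is complete in the hyperplane, and conclude that its $(r-1)$-dimensional cones (which lie in $\Sigma(r-1)$) cover it; for (2)$\Rightarrow$(1) you invoke Lemma \ref{tauHi} to identify $\langle\tau\rangle_\RR$ with some $H_i$ and show $H_i$ is a supporting hyperplane because relative interiors of cones avoid $\Supp\Sigma(r-1)$. The only differences are cosmetic: you make explicit the nowhere-dense argument that the paper compresses into the assertion that the maximal cones of the slice have dimension $r-1$, and you treat maximal cones first and then reduce via faces, where the paper handles all cones uniformly.
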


\begin{proof}
Assume $(1)$. Let $\tau\in\Sigma(r-1)$. Since $\Sigma$ is complete,
$\langle\tau\rangle_\RR\subseteq \Supp \Sigma$.
Thus $\langle\tau\rangle_\RR=\bigcup_{\sigma\in\Sigma}\langle\tau\rangle_\RR\cap\sigma$.
By (\ref{p1}), $\Sigma':=\{\langle\tau\rangle_\RR\cap\sigma\mid\sigma\in\Sigma\}$
is a subfan of $\Sigma$. Further, $\Supp \Sigma'(r-1)=\Supp \Sigma'=\langle\tau\rangle_\RR$
because $\Sigma'$ is complete in $N\cap\langle\tau\rangle_\RR$ and
the maximal cones in $\Sigma'$ have dimension $r-1$.
Hence for each $\tau$ of codimension $1$, $\langle\tau\rangle_\RR$ is a union
of elements of $\Sigma(r-1)$.
This implies $\Supp \Sigma(r-1)=\bigcup_{\tau\in\Sigma(r-1)}\langle\tau\rangle_\RR$
(finite union by definition of complete).

Now assume $\Supp \Sigma(r-1)=H_1\cup \ldots\cup H_n$ for some hyperplanes $H_1,\ldots,H_n$.
Let $\tau\in\Sigma(r-1)$ and $\sigma\in\Sigma$.
Then by Lemma \ref{tauHi}, $\langle\tau\rangle_\RR=H_i$ for some $1\le i\le n$,
and there exist $\eta_1,\ldots,\eta_k\in\Sigma(r-1)$ with $H_i=\eta_1\cup\ldots\cup\eta_k$.
But $\sigma\cap H_i= \bigcup_{j=1}^k \sigma\cap\eta_j$,
so $\stackrel{\circ}{\sigma}\cap H_i=\emptyset$, i.e.\ $H_i$ is a supporting
hyperplane and $\sigma\cap H_i$
is a face of $\sigma$ and thus an element of $\Sigma$.
\end{proof}

\begin{lemma}\label{AtoSigma}
Let $\Sigma$ be an $r$-dimensional \stsym\ fan.
Then the set of all intersections of closed  chambers of $\Ac(\Sigma)$ is $\Sigma$.
In particular, $\Sigma$ is \csym.
\end{lemma}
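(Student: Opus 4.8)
The plan is to identify the maximal cones of $\Sigma$ with the closed chambers of $\Ac(\Sigma)$, and then to prove that $\Sigma$ is exactly the set of finite intersections of its maximal cones. Write $\Ac(\Sigma)=\{H_1,\ldots,H_n\}$ and fix $x_i\in V^*$ with $H_i=x_i^\perp$. Since the $H_i$ are linear hyperplanes, $\Ac(\Sigma)$ is a central arrangement, and each closed chamber has the form $\overline C=\{v\in V\mid \epsilon_i x_i(v)\ge 0 \text{ for all }i\}$ for a sign vector $\epsilon\in\{\pm 1\}^n$. Central symmetry will then fall out of this description for free.

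First I would establish $\Sigma(r)=\{\overline C\mid C\in\Kc(\Ac(\Sigma))\}$. By Lemmas \ref{tauHi} and \ref{stsymHi}, each $H_i$ equals $\langle\tau\rangle_\RR$ for some $\tau\in\Sigma(r-1)$ and meets every $\sigma\in\Sigma$ in a face; in particular $\stackrel{\circ}{\sigma}\cap H_i=\emptyset$ for each maximal cone $\sigma$ and each $i$. Hence $\stackrel{\circ}{\sigma}$ is connected and disjoint from $\bigcup_i H_i$, so it lies in a single chamber. Conversely, since $\Sigma$ is complete and every proper face of a cone lies in a facet, hence in some $H_i$, every point of a chamber lies in the interior of a maximal cone; as chambers are connected and the interiors of distinct maximal cones are disjoint, each chamber sits inside a single $\stackrel{\circ}{\sigma}$. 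The two inclusions combine to a bijection $\sigma\mapsto\stackrel{\circ}{\sigma}$ between $\Sigma(r)$ and $\Kc(\Ac(\Sigma))$, so $\overline C=\sigma$ identifies closed chambers with maximal cones.

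It then remains to show that $\Sigma$ equals the set of finite intersections of maximal cones. One inclusion is immediate: by the fan axiom that any two cones meet in a common face, every finite intersection of cones of $\Sigma$ lies again in $\Sigma$. For the reverse inclusion I would fix $\tau\in\Sigma$ and prove
\[ \tau=\bigcap_{\sigma\in\Sigma(r),\ \tau\subseteq\sigma}\sigma=:D. \]
Choose $p$ in the relative interior of $\tau$ and set $Z=\{i\mid x_i(p)=0\}=\{i\mid\tau\subseteq H_i\}$; completeness yields a maximal cone through $p$, and any such cone has $\tau$ as a face, so the intersection is nonempty, and for $i\notin Z$ all these cones share the sign $\eta_i:=\operatorname{sign}x_i(p)$. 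The crucial use of completeness is this: the union of the cones containing $p$ is a neighborhood of $p$, so for each $i\in Z$ there are maximal cones $\sigma\supseteq\tau$ on both sides of $H_i$, contributing both signs on the $i$-th coordinate. Intersecting the defining inequalities then gives $D=\{v\mid x_i(v)=0\ (i\in Z),\ \eta_i x_i(v)\ge 0\ (i\notin Z)\}$. Since $p$ satisfies all the equalities and all the inequalities strictly, it lies in the relative interior of $D$; and as $\tau$ is a face of $D$ (both being faces of any $\sigma\supseteq\tau$, with $\tau\subseteq D$) while a proper face misses the relative interior, I conclude $\tau=D$. This last step, ruling out that $D$ is strictly larger than $\tau$, is the main obstacle, and it is precisely where the ``both sides'' consequence of completeness is indispensable.

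Finally, central symmetry is read off from the description just obtained. Because $\Ac(\Sigma)$ is central, the map $v\mapsto -v$ permutes its chambers, so $-\Sigma(r)=\Sigma(r)$; since every cone of $\Sigma$ is an intersection of maximal cones and $-(\sigma_1\cap\cdots\cap\sigma_k)=(-\sigma_1)\cap\cdots\cap(-\sigma_k)$, it follows that $-\Sigma=\Sigma$, i.e.\ $\Sigma$ is \csym.
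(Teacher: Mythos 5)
Your proof is correct and takes essentially the same route as the paper: the core of both arguments is the identification $\Sigma(r)=\{\overline{C}\mid C\in\Kc(\Ac(\Sigma))\}$, obtained from the facts that the facets of maximal cones lie in $H_1\cup\ldots\cup H_n$ while the interiors of maximal cones avoid this union, together with completeness. The only difference is one of detail: the paper stops after this identification and treats the rest as immediate, whereas you also write out the implicitly used fact that in a complete fan every cone $\tau$ is the intersection of the maximal cones containing it (your sign-vector argument, with both signs occurring on each hyperplane through $\tau$) and make the central-symmetry conclusion explicit.
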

\begin{proof}
Let $\sigma\in\Sigma(r)$. Then the facets of $\sigma$ are in
$\Supp \Sigma(r-1)=H_1\cup \ldots\cup H_n$ and
$\stackrel{\circ}{\sigma}\subseteq \RR^r\smallsetminus \Supp \Sigma(r-1)$.
Since $\Sigma$ is complete, $\Sigma(r)$ is the set of closed chambers
of $\Ac$.
\end{proof}

\begin{defin}
Let $\Sigma$ be a fan in $N$, $\delta \in \Sigma$, and write
$\kappa : V \rightarrow V/\langle\delta\rangle_\RR$ for the canonical projection.
Then 
\[ \Star(\delta) = \{ \overline\sigma=\kappa(\sigma) \subseteq V/\langle\delta\rangle_\RR \mid \delta \subseteq \sigma \in \Sigma\}\]
is a fan in $N(\delta):=\kappa(N)$ (compare \cite[Ex.\ 3.2.7]{CLS}).
Its toric variety is isomorphic to the orbit closure $V(\delta)$ in $X_\Sigma$.
\end{defin}

\begin{lemma}\label{stsymstar}
Let $\Sigma$ be an $r$-dimensional fan. Then the following are equivalent:
\begin{enumerate}
\item The fan $\Sigma$ is \stsym,
\item the fan $\Star(\sigma)$ is \stsym\ for all $\sigma \in \Sigma$.
\end{enumerate}
\end{lemma}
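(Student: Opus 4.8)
The plan is to derive both implications from the wall–cut characterisation of strong symmetry proved in Lemma \ref{stsymHi}, together with the standard fact that stars of cones in a complete fan are again complete. The implication $(2)\Rightarrow(1)$ is then immediate: the zero cone $\{0\}$ is a face of every cone and hence lies in $\Sigma$, and since $\langle\{0\}\rangle_\RR=\{0\}$ the projection $\kappa$ is the identity, so $\Star(\{0\})=\Sigma$. Thus if every star is \stsym, so in particular is $\Sigma=\Star(\{0\})$.

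For $(1)\Rightarrow(2)$ I would fix $\sigma\in\Sigma$ with $d=\dim\sigma$ and write $\kappa\colon V\to V/\langle\sigma\rangle_\RR$, and then verify the two conditions of part (1) of Lemma \ref{stsymHi} for the $(r-d)$-dimensional fan $\Star(\sigma)$. Completeness of $\Star(\sigma)$ must be settled first, since strong symmetry includes it by definition: as $\Sigma$ is complete, the cones of $\Sigma$ containing $\sigma$ cover a neighbourhood of any point in the relative interior of $\sigma$, so their images under $\kappa$ cover a neighbourhood of the origin in $V/\langle\sigma\rangle_\RR$; a fan whose support contains a neighbourhood of the origin is complete.

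The core of the argument is the cut condition. A codimension-one cone of $\Star(\sigma)$ has the form $\bar\tau=\kappa(\tau)$ with $\tau\in\Sigma(r-1)$, $\sigma\subseteq\tau$, and a general cone is $\bar\rho=\kappa(\rho)$ with $\sigma\subseteq\rho\in\Sigma$. Strong symmetry of $\Sigma$ gives $\Supp\Sigma(r-1)=H_1\cup\dots\cup H_n$, and by Lemma \ref{tauHi} the $(r-1)$-dimensional cone $\tau$ spans one of these, say $\langle\tau\rangle_\RR=H_i$, whence $\langle\bar\tau\rangle_\RR=\kappa(H_i)$. Since $\sigma\subseteq\tau$ we have $\langle\sigma\rangle_\RR\subseteq H_i$, so $H_i$ is saturated for $\kappa$, i.e.\ $\kappa^{-1}(\kappa(H_i))=H_i$; this gives $\bar\rho\cap\langle\bar\tau\rangle_\RR=\kappa(\rho)\cap\kappa(H_i)=\kappa(\rho\cap H_i)$. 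Now applying part (1) of Lemma \ref{stsymHi} to $\Sigma$ itself, $\rho\cap\langle\tau\rangle_\RR=\rho\cap H_i\in\Sigma$, and this cone contains $\sigma$ (because $\sigma\subseteq\rho$ and $\sigma\subseteq\langle\sigma\rangle_\RR\subseteq H_i$), so its image lies in $\Star(\sigma)$. Hence $\bar\rho\cap\langle\bar\tau\rangle_\RR\in\Star(\sigma)$, and Lemma \ref{stsymHi} yields that $\Star(\sigma)$ is \stsym.

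I expect the main obstacle to be the saturation step, namely checking cleanly that intersecting with the spanned hyperplane commutes with $\kappa$. This is precisely where the hypothesis $\sigma\subseteq\tau$ (equivalently $\langle\sigma\rangle_\RR\subseteq H_i$) enters, and it is what lets the $r$-dimensional cut condition for $\Sigma$ descend to the quotient fan; the completeness of the star is routine but should be recorded explicitly.
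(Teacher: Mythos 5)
Your proof is correct and follows essentially the same route as the paper: both implications are derived from Lemma \ref{stsymHi}, with $(2)\Rightarrow(1)$ handled via $\Sigma=\Star(\{0\})$ and $(1)\Rightarrow(2)$ by lifting the codimension-one cut condition along $\kappa$ using $\overline\pi\cap\langle\overline\tau\rangle_\RR=\overline{\pi\cap\langle\tau\rangle_\RR}$. The only difference is that you make explicit two points the paper leaves tacit --- the completeness of $\Star(\sigma)$ and the saturation argument $\kappa^{-1}(\kappa(H_i))=H_i$ justifying that intersection commutes with the projection --- which is a welcome sharpening rather than a different approach.
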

\begin{proof}
We use Lemma \ref{stsymHi}.
Assume $(1)$. Let $\sigma \in \Sigma$ and consider a cone $\overline\tau \in \Star(\sigma)$ of codimension one.
Then $\langle\overline\tau\rangle_\RR=\overline{\langle\tau\rangle}_\RR\subseteq V/\langle\sigma\rangle_\RR$
and hence for any cone $\overline\pi \in \Star(\sigma)$ we have 
$\overline\pi \cap \langle\overline\tau\rangle_\RR = \overline{\pi \cap \langle\tau\rangle_\RR} \in \Star(\sigma)$,
because $\pi \cap \langle\tau\rangle_\RR$ is a cone in $\Sigma$ containing $\sigma$; thus $\Star(\sigma)$
is \stsym.

Since $\Sigma=\Star(\{0\})$, $(1)$ follows from $(2)$.
\end{proof}

\begin{propo}
Let $\Sigma$ be an $r$-dimensional complete fan. Then the following are equivalent:
\begin{enumerate}
\item The fan $\Sigma$ is \stsym,
\item the fan $\Star(\sigma)$ is \csym\ for all $\sigma \in \Sigma$,
\item the fan $\Star(\delta)$ is \csym\ for all $\delta \in \Sigma(r-2)$.
\end{enumerate}
\end{propo}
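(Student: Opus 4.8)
The plan is to prove the equivalences along the cycle $(1)\Rightarrow(2)\Rightarrow(3)\Rightarrow(1)$. The implication $(1)\Rightarrow(2)$ is immediate from what is already established: if $\Sigma$ is \stsym, then $\Star(\sigma)$ is \stsym\ for every $\sigma\in\Sigma$ by Lemma \ref{stsymstar}, and every \stsym\ fan is \csym\ by Lemma \ref{AtoSigma} (applied in the quotient $V/\langle\sigma\rangle_\RR$). The implication $(2)\Rightarrow(3)$ is trivial, since $(3)$ merely restricts the hypothesis of $(2)$ to the cones of dimension $r-2$. All the substance therefore lies in $(3)\Rightarrow(1)$, which I expect to be the main obstacle.

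For $(3)\Rightarrow(1)$ I would invoke the criterion of Lemma \ref{stsymHi}: it suffices to show that $\sigma\cap\langle\tau\rangle_\RR\in\Sigma$ for every $\tau\in\Sigma(r-1)$ and every $\sigma\in\Sigma$. Fix a wall $\tau\in\Sigma(r-1)$ and put $H:=\langle\tau\rangle_\RR$, a hyperplane in $V$. The key local input extracted from $(3)$ is a \emph{propagation} property. Let $\delta\in\Sigma(r-2)$ be a facet of some wall $\tau'$ with $\langle\tau'\rangle_\RR=H$. Since $\dim\delta=r-2$, the fan $\Star(\delta)$ is two-dimensional and $\kappa(\tau')$ is one of its rays; as $\Star(\delta)$ is \csym\ by $(3)$, the antipode $-\kappa(\tau')$ is again a ray, necessarily the image of a wall $\tau''\supseteq\delta$ with $\kappa(\tau'')=-\kappa(\tau')$. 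From $\delta\subseteq\tau''$ and $\kappa(\tau'')=-\kappa(\tau')$ one reads off $\langle\tau''\rangle_\RR=\langle\delta\rangle_\RR+\RR\kappa(\tau')=H$, so $\tau''$ lies on the \emph{same} hyperplane $H$ and occupies the half-space of $H$ opposite to $\tau'$ across $\langle\delta\rangle_\RR$.

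Using propagation I would then prove that $H$ is swept out entirely by walls, i.e. $U:=\bigcup\{\tau'\in\Sigma(r-1)\mid\langle\tau'\rangle_\RR=H\}$ equals $H$. The set $U$ is nonempty (it contains $\tau$) and, being a finite union of closed $(r-1)$-cones, is closed and full-dimensional in $H\cong\RR^{r-1}$. If $U\ne H$, then $\partial U$ is nonempty, and a generic boundary point $p$ lies in the relative interior of a facet $\delta$ of some wall $\tau'\subseteq U$ (the faces of dimension $\le r-3$ are too small to fill the $(r-2)$-dimensional boundary). But propagation supplies a second wall $\tau''\subseteq U$ filling the opposite half-space across $\langle\delta\rangle_\RR$, so $\tau'\cup\tau''$ contains a full neighbourhood of $p$ in $H$, contradicting $p\in\partial U$. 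Hence $U=H$.

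Finally I would conclude. Once $H=U$ is a union of walls, no maximal cone $\sigma\in\Sigma(r)$ can meet $H$ in its interior: a point of $H\cap\stackrel{\circ}{\sigma}$ would also lie in some wall $\tau'$, hence in the relative interior of a cone of dimension $\le r-1$, impossible since in a fan each point lies in the relative interior of a unique cone. Thus $H$ supports each $\sigma\in\Sigma(r)$ it meets, so $\sigma\cap H$ is a face of $\sigma$ and lies in $\Sigma$; for arbitrary $\sigma\in\Sigma$, choosing a maximal $\sigma_{\max}\supseteq\sigma$ gives $\sigma\cap H=\sigma\cap(\sigma_{\max}\cap H)$, an intersection of two cones of $\Sigma$ and hence a common face in $\Sigma$. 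By Lemma \ref{stsymHi} this shows $\Sigma$ is \stsym. The delicate point throughout is the covering argument of the third paragraph: it is precisely where the global \csym\ hypothesis on codimension-two stars is converted into the statement that each wall-hyperplane is a union of walls.
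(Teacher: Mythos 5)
Your proof is correct and follows essentially the same route as the paper: $(1)\Rightarrow(2)$ via Lemmas \ref{AtoSigma} and \ref{stsymstar}, $(2)\Rightarrow(3)$ trivially, and for $(3)\Rightarrow(1)$ the same key argument --- take a relative boundary point $p$ of the union of walls lying in a hyperplane $H=\langle\tau\rangle_\RR$, pass to a codimension-two face $\delta$ containing $p$, and use central symmetry of the two-dimensional fan $\Star(\delta)$ to produce an opposite wall in $H$, contradicting $p$ being a boundary point. The only differences are cosmetic and, if anything, in your favor: you make explicit that $p$ must be chosen in the relative interior of a facet $\delta$ (a genericity point the paper's write-up leaves implicit, though needed for the neighborhood argument), and you conclude via the criterion of Lemma \ref{stsymHi} where the paper verifies the definition of strong symmetry directly from $H\subseteq\Supp\Sigma(r-1)$.
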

\begin{proof}
The implication $(1) \Rightarrow (2)$ follows from Lemma \ref{AtoSigma} and Lemma \ref{stsymstar};
$(2) \Rightarrow (3)$ is obvious.

Suppose that $\Star(\delta)$ is \csym\ for any $\delta\in\Sigma(r-2)$.
We have to show that for any $\tau_0\in\Sigma(r-1)$,
$H:=\langle\tau_0\rangle_\RR\subseteq S:=\Supp \Sigma(r-1)$. Suppose $H\nsubseteq S$.
Let $\{\tau_0,\ldots,\tau_k\}=\{\tau\in\Sigma(r-1)\mid \tau\subseteq H\}$. Then
\[ \tau_0\cup\ldots\cup \tau_k \subsetneqq H. \]
Let $p$ be a point of the relative border $\partial(\tau_0\cup\ldots\cup \tau_k)$ in $H$.
Then there is an $i$ with $p\in\partial\tau_i$ and
a $\delta\in\Sigma(r-2)$, $\delta\subseteq\tau_i$ such that
$p\in\delta\subseteq\tau_i\subseteq\langle\tau_i\rangle_\RR=H$.
We have $\overline{\tau_i}\in\Star(\delta)$, $\overline{\tau_i}\subseteq \overline{H}$,
and $\dim \overline{H}=1$.
Because $\Star(\delta)$ is \csym, $-\overline{\tau_i}\in\Star(\delta)$.
Then $-\overline{\tau_i}=\overline{\tau'}$ for some $\delta\subseteq\tau'\in\Sigma(r-1)$
with $\overline{\tau'}\subseteq\overline{H}$.
Then $\tau'\subseteq H$, $\delta\subseteq\tau_i\cap\tau'$ and $\tau'\ne\tau_i$.
Hence $\delta=\tau_i\cap\tau'$ because $\dim(\delta)=r-2$.
But then $p\notin\partial(\tau_0\cup\ldots\cup\tau_k)$, contradicting the
assumption.
\end{proof}

\begin{examp}
There are of course fans which are \csym\ but not \stsym.
Here is such an example which is smooth:
Let $R$ be the standard basis of $\RR^3$ and $\Sigma_R$ be the fan as defined
in Lemma \ref{lem1}. Blowing up along two opposite cones $\sigma,-\sigma\in\Sigma_R$
preserves the central symmetry, but the resulting fan is not \stsym.
\end{examp}

In the case of smooth \stsym\ fans, we obtain

\begin{lemma}\label{lemrestr}
Let $\Sigma$ be a smooth \stsym\ fan in $N$, $\sigma\in\Sigma$ and $E:=\langle\sigma\rangle_\RR$.
Then $N\cap E$ is a lattice of rank $\dim(\sigma)$ and
$\Sigma^E:=\{\eta\cap E\mid \eta\in\Sigma\}\subseteq\Sigma$ is a smooth strongly symmetric
fan in $N\cap E$.
\end{lemma}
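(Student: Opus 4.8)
The plan is to verify three things about $\Sigma^E$: that $N\cap E$ is a lattice of the correct rank, that $\Sigma^E$ is a fan all of whose cones are smooth, and that it is strongly symmetric. For the lattice claim, I would use smoothness of $\Sigma$: since $\sigma$ is smooth, its generators extend to a $\ZZ$-basis $n_1,\ldots,n_r$ of $N$, and $E=\langle\sigma\rangle_\RR$ is spanned over $\RR$ by the first $d=\dim(\sigma)$ of these. Then $N\cap E$ contains $\ZZ n_1+\cdots+\ZZ n_d$, and any element of $N\cap E$ written in the basis $n_1,\ldots,n_r$ has its last $r-d$ coordinates equal to zero; hence $N\cap E=\ZZ n_1+\cdots+\ZZ n_d$ is a rank-$d$ lattice.

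Next I would check that $\Sigma^E$ is a smooth fan in $N\cap E$ and that $\Sigma^E\subseteq\Sigma$. The key tool is Lemma \ref{stsymstar} together with the characterization in Lemma \ref{stsymHi}: because $\Sigma$ is strongly symmetric, for every $\eta\in\Sigma$ the intersection $\eta\cap E=\eta\cap\langle\sigma\rangle_\RR$ is again a cone of $\Sigma$ (apply the subspace-intersection property of Lemma \ref{stsymHi} iteratively, or note that $\langle\sigma\rangle_\RR$ is an intersection of the hyperplanes $\langle\tau\rangle_\RR$ of $\Sigma(r-1)$ through which one can cut down to $E$). This shows simultaneously that $\Sigma^E\subseteq\Sigma$ and that the elements of $\Sigma^E$ are genuine cones forming a fan, with the face and intersection axioms inherited from $\Sigma$. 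Smoothness of each $\eta\cap E$ is then automatic, since every cone of $\Sigma$ is smooth and smoothness is preserved under passing to faces.

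Finally, for strong symmetry of $\Sigma^E$ as a fan in $N\cap E$, I would apply Lemma \ref{stsymHi} in the ambient space $E$: I must show $\Sigma^E$ is complete in $E$ and that for all $\overline\tau\in\Sigma^E$ of codimension one in $E$ and all $\pi\in\Sigma^E$, the intersection $\pi\cap\langle\overline\tau\rangle_\RR$ lies in $\Sigma^E$. Completeness follows because $\Supp\Sigma^E=E\cap\Supp\Sigma=E$ (using completeness of $\Sigma$), and the subspace-intersection condition is inherited: $\langle\overline\tau\rangle_\RR$ is a subspace of $E$, hence a subspace of $V$, so $\pi\cap\langle\overline\tau\rangle_\RR$ is a cone of $\Sigma$ by strong symmetry of $\Sigma$, and since it lies in $E$ it is a cone of $\Sigma^E$.

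The main obstacle I anticipate is the bookkeeping that identifies $\eta\cap E$ as an actual cone of $\Sigma$ rather than merely a convex subset: the strong symmetry condition in Lemma \ref{stsymHi} is stated for intersecting with the span $\langle\tau\rangle_\RR$ of a single codimension-one cone $\tau$, whereas $E=\langle\sigma\rangle_\RR$ may have higher codimension. I expect to resolve this by expressing $E$ as an intersection of the linear spans of several cones in $\Sigma(r-1)$ and peeling off one hyperplane at a time, each step staying inside $\Sigma$ by Lemma \ref{stsymHi}; the smoothness hypothesis guarantees that such a chain of codimension-one reductions down to $E$ exists and behaves well with the lattice.
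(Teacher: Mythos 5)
Your proposal is correct and is essentially the paper's own argument: the lattice claim via a $\ZZ$-basis of the smooth cone $\sigma$, the membership $\eta\cap E\in\Sigma$ by iterating the codimension-one intersection property of Lemma \ref{stsymHi} (the paper packages this as induction on $\dim(\sigma)$, treating $\sigma\in\Sigma(r-1)$ first), completeness of $\Sigma^E$ from $\Supp\Sigma=V$, and smoothness inherited through the split inclusion $N\cap E\hookrightarrow N$. The only step you state too loosely---that strong symmetry of $\Sigma$ by itself allows intersecting $\pi$ with the arbitrary subspace $\langle\overline{\tau}\rangle_\RR$---is repaired by exactly the peeling you describe, since $\overline{\tau}$ is itself a cone of $\Sigma$, hence a face of a smooth (so simplicial) chamber by Lemma \ref{AtoSigma}, and its span is therefore an intersection of hyperplanes of $\Ac(\Sigma)$; the paper sidesteps this point by instead exhibiting the hyperplanes of the restricted arrangement directly as $H_2\cap E,\ldots,H_n\cap E$.
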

\begin{proof}
Using a $\ZZ$-basis of $\sigma$ one finds that $N\cap E$ is a sublattice of $N$
of rank $\dim(\sigma)$ and that the inclusion $N\cap E\hookrightarrow N$ is split.
Consider first a $\sigma\in\Sigma(r-1)$ and let $E:=\langle\sigma\rangle_\RR$.
By Lemma \ref{stsymHi}, $\eta\cap E\in\Sigma$ for all $\eta\in\Sigma$.
Thus $\Sigma^E$ is a subfan of $\Sigma$ and it is complete since $\Supp \Sigma=V$.
Write $\Supp \Sigma(r-1)=E\cup H_2\cup\ldots\cup H_n$ for hyperplanes
$H_2,\ldots,H_n$. Then
\[ \Supp \Sigma^E(r-2)=(H_2\cup\ldots\cup H_n)\cap E
=(H_2\cap E)\cup\ldots\cup (H_n\cap E), \]
i.e.\ $\Sigma^E$ is strongly symmetric.
The claim is true for arbitrary $\sigma\in\Sigma$ by induction on $\dim(\sigma)$.
\end{proof}

\section{The correspondence}\label{corresp}

\begin{lemma}\label{lem1}
Let $(\Ac,R)$ be a crystallographic arrangement in $V$. Set
\[ M_R:=\sum_{\alpha\in R} \ZZ\alpha \cong \ZZ^r \]
and let $N_R$ be the dual lattice to $M_R$.
Then the set $\Sigma_R$ of all intersections of closed  chambers of $\Ac$ is a
\stsym\ smooth fan in $N_R$.
\end{lemma}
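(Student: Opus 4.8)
The plan is to reduce the whole statement to the single lattice identity $M_R=\sum_{\alpha\in B^K}\ZZ\alpha$ and then to read off the fan, its completeness, its smoothness and its strong symmetry from the simplicial description (\ref{simp_cone}) of the closed chambers. So first I would fix the lattice. Since $B^K\subseteq R$ we have $\sum_{\alpha\in B^K}\ZZ\alpha\subseteq M_R$, while the crystallographic condition (I) gives the reverse inclusion $M_R=\sum_{\alpha\in R}\ZZ\alpha\subseteq\sum_{\alpha\in B^K}\ZZ\alpha$; hence $M_R=\sum_{\alpha\in B^K}\ZZ\alpha$ for \emph{every} chamber $K$. As $B^K$ is linearly independent this already shows $M_R\cong\ZZ^r$ is a genuine lattice of full rank, independent of $K$, so $N_R$ is well defined. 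Writing $B^K=\{\alpha_1,\dots,\alpha_r\}$, its dual basis $\alpha_1^\vee,\dots,\alpha_r^\vee$ is then a $\ZZ$-basis of $N_R$, and by (\ref{simp_cone}) the closed chamber $\overline K=\langle\alpha_1^\vee,\dots,\alpha_r^\vee\rangle_{\RR_{\ge 0}}$ is generated by this $\ZZ$-basis; in particular every closed chamber is a smooth $N_R$-cone.

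Next I would show that $\Sigma_R$ is exactly the set of faces of the closed chambers and satisfies the fan axioms. Each facet $\overline K\cap H_i$ of $\overline K$ is shared with a unique neighbouring chamber $K_i$, so any face $F_J=\langle\alpha_j^\vee\mid j\in J\rangle_{\RR_{\ge 0}}=\overline K\cap\bigcap_{i\notin J}H_i$ equals $\overline K\cap\bigcap_{i\notin J}\overline{K_i}$ and therefore lies in $\Sigma_R$. Conversely, for two chambers $K,K'$ one has $\overline K\cap\overline{K'}=\overline K\cap\bigcap_{H}H$, the intersection ranging over the hyperplanes $H=\beta^\perp$ separating $K$ and $K'$. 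Choosing $\beta\in R^K_+$ with $H=\beta^\perp$ and expanding $\beta=\sum_i c_i\alpha_i$, evaluation at the dual basis gives $c_i=\beta(\alpha_i^\vee)\ge 0$, because $\alpha_i^\vee\in\overline K$ and $\beta\ge 0$ on $\overline K$; hence $\overline K\cap\beta^\perp=\{v\in\overline K\mid\alpha_i(v)=0\text{ whenever }c_i>0\}$ is a face of the simplicial cone $\overline K$. Intersecting over all separating $H$ exhibits $\overline K\cap\overline{K'}$ as a common face of $\overline K$ and $\overline{K'}$, which is the compatibility axiom for a fan.

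Finally I would record the remaining properties. As $\Ac$ is finite there are finitely many chambers and finitely many intersections, and the closed chambers cover $V$, so $\Supp\Sigma_R=V$ and $\Sigma_R$ is complete. Every cone in $\Sigma_R(r-1)$ is a wall and so lies in some $H_i\in\Ac$, while a point of $H_i$ on no other hyperplane is interior to a wall; by density and closedness $H_i\subseteq\Supp\Sigma_R(r-1)$, whence $\Supp\Sigma_R(r-1)=H_1\cup\dots\cup H_n$, i.e.\ $\Sigma_R$ is strongly symmetric with $\Ac(\Sigma_R)=\Ac$ (one may also cite Lemma \ref{stsymHi}). Smoothness is immediate, since every cone of $\Sigma_R$ is a face $F_J$ of some $\overline K$ and is generated by the subset $\{\alpha_j^\vee\mid j\in J\}$ of a $\ZZ$-basis of $N_R$.

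The step I expect to be the main obstacle is the fan-axiom computation in the second paragraph. The delicate point is that a hyperplane $H$ separating $K$ and $K'$ need not be a wall of $K$, so it is not obvious that $\overline K\cap\overline{K'}$ is a face; the nonnegativity $c_i\ge 0$ of the coefficients — which in the crystallographic case are in fact nonnegative integers — is exactly what makes the intersection with $\beta^\perp$ collapse onto a coordinate face of $\overline K$. Verifying that, conversely, every face of a chamber is realised as an intersection of closed chambers (so that $\Sigma_R$ is truly closed under passing to faces) is the second place where the simpliciality of $\Ac$ is used in an essential way.
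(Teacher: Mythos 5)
Your proof is correct and takes essentially the same route as the paper: the paper's entire written proof is your smoothness argument --- condition (I) together with $B^K\subseteq R$ makes $B^K$ a $\ZZ$-basis of $M_R$, so by Eq.\ (\ref{simp_cone}) each closed chamber is generated by a $\ZZ$-basis of $N_R$ --- while the fan axioms, completeness and strong symmetry are dismissed there as ``clear''. Your second and third paragraphs (the separating-hyperplane computation showing $\overline K\cap\overline{K'}$ is a common face, and the density argument giving $\Supp\Sigma_R(r-1)=H_1\cup\dots\cup H_n$) are a sound filling-in of exactly the parts the paper leaves implicit.
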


\begin{proof}
It is clear that $\Sigma_R$ is a \stsym\ fan.
Let $\sigma\in\Sigma_R$ be of maximal dimension, i.e.\ $\sigma=\overline{K}$
for a chamber $K\in\Kc(\Ac)$.
By Equation \ref{simp_cone}, $\sigma$ is generated by the basis of $N_R$ dual to $B^K$,
hence $\sigma$ is smooth.
\end{proof}

\begin{lemma}\label{lem2}
Let $\Sigma$ be a \stsym\ smooth fan in $N\subseteq V=\RR^r$.
Then there exists a set $R\subseteq V^*$ such that
$(\Ac,R)$ is a crystallographic arrangement, where
\[ \Ac = \Ac(\Sigma) = \{ \langle\tau\rangle_\RR \mid \tau\in\Sigma(r-1)\}. \]
\end{lemma}

\begin{proof}
Since $\Sigma$ is \stsym, $\Ac$ is a finite set of hyperplanes,
and by Lemma \ref{AtoSigma}, the set of all intersections of closed
chambers of $\Ac$ is $\Sigma$.
Further,
\[ \bigcup_{\sigma\in\Sigma(r)} \stackrel{\circ}{\sigma}
\:=\: V\setminus \bigcup_{H\in\Ac} H \]
since each facet of a $\sigma\in\Sigma(r)$ is
contained in a hyperplane of $\Ac$ and since $\Sigma$ is complete.
The cones $\stackrel{\circ}{\sigma}$ in the above union are open simplicial cones,
because $\sigma$ is smooth, hence $\Ac$ is a simplicial arrangement.

Let $\sigma\in\Sigma$ be a cone of maximal dimension. Since
$\sigma$ is smooth, there exists a unique $\ZZ$-basis of $N$
generating $\sigma$. We will prematurely denote $B^{K_\sigma}$ its dual basis,
where $K_\sigma$ is the chamber with $\overline{K_\sigma} = \sigma$.

Now set $R$ to be the union of all the $B^{K_\sigma}$ for $\sigma\in\Sigma(r)$.
Clearly,
\[ R \subseteq \sum_{\alpha \in B^{K_\sigma}} \ZZ \alpha, \]
since each $B^{K_\sigma}$ is a $\ZZ$-basis of $M=\Hom(N,\ZZ)$ and $R\subseteq M$.

It remains to show that for each hyperplane $H=\langle\tau\rangle_\RR\in\Ac$,
$\tau\in\Sigma(r-1)$, there is a vector $x\in R$ such that $R \cap H^\perp=\{\pm x\}$.

Let $\sigma\in\Sigma(r)$ containing $\tau$,
and $x$ be the element with $\{x\}=B^{K_\sigma}\cap H^\perp$.
In particular $x$ is primitive.
Assume $\lambda x\in R$ for a $\lambda\in\ZZ$.
Then there exists a $\sigma'\in\Sigma$ with $\lambda x\in B^{K_{\sigma'}}$.
Thus $\lambda=\pm 1$ since $B^{K_{\sigma'}}$ is a $\ZZ$-basis of $M$.
\end{proof}

\begin{theor}\label{thmcor}
The map $(\Ac,R) \mapsto \Sigma_R$ from the set of crystallographic arrangements
to the set of \stsym\ smooth fans is a bijection.
\end{theor}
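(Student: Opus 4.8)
The plan is to show that the two constructions given in Lemma~\ref{lem1} and Lemma~\ref{lem2} are mutually inverse. Lemma~\ref{lem1} gives a map $\Phi\colon(\Ac,R)\mapsto\Sigma_R$ from crystallographic arrangements to \stsym\ smooth fans, and Lemma~\ref{lem2} shows that for every \stsym\ smooth fan $\Sigma$ there is a set $R$ making $(\Ac(\Sigma),R)$ crystallographic; call this second assignment $\Psi\colon\Sigma\mapsto(\Ac(\Sigma),R)$. To conclude bijectivity it suffices to verify $\Phi\circ\Psi=\id$ and $\Psi\circ\Phi=\id$, so the heart of the matter is that both $\Psi$ and the set $R$ it produces are \emph{canonical}, i.e.\ uniquely determined by the fan, and that no two distinct crystallographic arrangements give the same fan.

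First I would check $\Phi\circ\Psi=\id$. Starting from a \stsym\ smooth fan $\Sigma$, Lemma~\ref{lem2} produces $(\Ac(\Sigma),R)$ with $\Ac(\Sigma)=\{\langle\tau\rangle_\RR\mid\tau\in\Sigma(r-1)\}$. Applying $\Phi$ returns $\Sigma_R$, the set of all intersections of closed chambers of $\Ac(\Sigma)$. By Lemma~\ref{AtoSigma} this set of intersections is exactly $\Sigma$, so $\Sigma_R=\Sigma$ on the nose. This direction is therefore immediate from Lemma~\ref{AtoSigma} and requires essentially no new work.

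Next I would check $\Psi\circ\Phi=\id$, which is the delicate direction. Here the subtlety is that $\Psi$ as literally written chooses a particular lattice $N$ and recovers $R$ from the $\ZZ$-bases dual to the smooth maximal cones, whereas the input $(\Ac,R)$ comes with its own lattice $M_R=\sum_{\alpha\in R}\ZZ\alpha$ and dual $N_R$. Starting from $(\Ac,R)$, the fan $\Sigma_R$ lives in $N_R$, and its associated arrangement $\Ac(\Sigma_R)$ is visibly $\Ac$ itself, since the maximal cones of $\Sigma_R$ are the closed chambers and their walls span exactly the hyperplanes of $\Ac$. The real content is to see that the set $R'$ reconstructed by Lemma~\ref{lem2} from $\Sigma_R$ coincides with $R$. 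For each chamber $K$ the basis $B^K$ is, by Equation~\ref{simp_cone} and the definition of $N_R$, precisely the $\ZZ$-basis of $M_R$ dual to the primitive generators of $\overline K$; thus the $B^{K_\sigma}$ collected in Lemma~\ref{lem2} are exactly the sets $B^K$, and $R'=\bigcup_K B^K$. It then remains to argue that $\bigcup_K B^K=R$: the inclusion $\bigcup_K B^K\subseteq R$ holds because each $B^K\subseteq R^K_+\subseteq R$, and the reverse inclusion holds because every $\alpha\in R$ is, up to sign, a wall normal of some chamber (as $\alpha^\perp\in\Ac$ is a wall of at least one chamber, and the crystallographic normalization $\RR\alpha\cap R=\{\pm\alpha\}$ pins down $\alpha$ up to sign), combined with the last paragraph of Lemma~\ref{lem2} showing $R'$ contains only primitive $\pm x$ for each wall.

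The main obstacle I expect is precisely this equality $R=\bigcup_{K}B^K$ together with the bookkeeping of the ambient lattice: one must make sure that the lattice $N$ implicitly used by $\Psi$ is identified with $N_R$ so that ``primitive'' means the same thing in both constructions, and that the sign ambiguity $\pm\alpha$ is handled consistently. Once it is established that every element of $R$ appears (up to sign) as a wall normal of some chamber and that $R$ is symmetric under negation, the two constructions match verbatim and the maps are inverse to each other, completing the proof.
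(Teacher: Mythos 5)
Your proposal is correct and takes the same route as the paper: the paper's proof of Theorem~\ref{thmcor} is literally just the citation ``This is Lemma~\ref{lem1} and Lemma~\ref{lem2},'' with the verification that the two constructions are mutually inverse (via Lemma~\ref{AtoSigma} for one composition, and the identification of the reconstructed $R'=\bigcup_K B^{K}$ with $R$ for the other) left implicit. Your write-up supplies exactly that implicit verification, including the lattice bookkeeping $M_R=M$ and the sign issue (which is settled by noting that chambers come in antipodal pairs $K,-K$ with $B^{-K}=-B^{K}$), so it matches the intended argument.
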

\begin{proof}
This is Lemma \ref{lem1} and Lemma \ref{lem2}.
\end{proof}

\begin{corol}
A complete classification of \stsym\ smooth toric varieties is now known.
\end{corol}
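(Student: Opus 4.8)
The plan is to derive this classification purely formally from the bijection of Theorem \ref{thmcor} together with the existing classification of finite Weyl groupoids, so that no new geometry is required beyond passing from individual objects to isomorphism classes. First I would invoke the standard dictionary of toric geometry (see \cite{CLS}): two smooth complete toric varieties $X_\Sigma$ and $X_{\Sigma'}$ are equivariantly isomorphic if and only if there is a lattice isomorphism $N\xrightarrow{\sim}N'$ carrying $\Sigma$ onto $\Sigma'$. Thus classifying \stsym\ smooth toric varieties amounts to classifying \stsym\ smooth fans up to lattice isomorphism.

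Next I would transport this equivalence across Theorem \ref{thmcor}. The key is that the correspondence is functorial: given a \stsym\ smooth fan $\Sigma$, both the arrangement $\Ac(\Sigma)$ and the set $R=\bigcup_{\sigma\in\Sigma(r)}B^{K_\sigma}$ reconstructed in Lemma \ref{lem2} are intrinsic to $\Sigma$, while conversely $\Sigma_R$ together with its ambient lattice $N_R$ is intrinsic to $(\Ac,R)$ by Lemma \ref{lem1}. Hence a lattice isomorphism $N_R\xrightarrow{\sim}N_{R'}$ taking $\Sigma_R$ to $\Sigma_{R'}$ is exactly the datum of a linear isomorphism $V\xrightarrow{\sim}V'$ inducing $R\mapsto R'$ (up to the sign ambiguity already built into $R$), i.e.\ an equivalence of crystallographic arrangements. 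Therefore isomorphism classes of \stsym\ smooth toric varieties correspond bijectively to equivalence classes of crystallographic arrangements.

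Finally I would invoke the complete classification. By \cite{p-C10} the crystallographic arrangements are precisely the arrangements arising from finite Weyl groupoids, and the latter are classified in the series \cite{p-CH09a}, \cite{p-CH09b}, \cite{p-CH09d}, \cite{p-CH09c}, \cite{p-CH10}. Reading that list through the bijection yields exactly the description stated in the Corollary of the Introduction: products of the rank-two varieties attached to triangulations of convex polygons, the families of types $A_r,B_r,C_r,D_r$ together with the series of $r-1$ further families, and the $74$ sporadic arrangements. Here the product decomposition of a variety matches the reducibility of the associated arrangement into an orthogonal product, which is compatible with the corresponding splitting of the lattice $N_R$.

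The main obstacle I anticipate lies in the second step, in matching the equivalence relations—and in particular in passing from equivariant to abstract isomorphisms of toric varieties, since a priori a non-toric isomorphism $X_\Sigma\cong X_{\Sigma'}$ might identify more objects than lattice isomorphisms of fans do. For smooth complete toric varieties this is controlled, as such isomorphisms are again induced by lattice isomorphisms up to the action of the (understood) automorphism group, so the list does not collapse further; it is verifying this compatibility, rather than carrying out any new computation, where the care is needed.
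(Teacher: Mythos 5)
Your proposal is correct and takes essentially the same route as the paper: the paper's entire proof is the one-line citation of the classification of finite Weyl groupoids \cite[Thm.\ 1.1]{p-CH10}, applied through the bijection of Theorem \ref{thmcor}, exactly as in your first and third steps. Your additional care about matching equivalence relations (equivariant versus abstract isomorphism, the latter reducing to the former for smooth complete toric varieties by conjugacy of maximal tori in the automorphism group) is a refinement that the paper leaves implicit rather than a different argument.
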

\begin{proof}
This is \cite[Thm.\ 1.1]{p-CH10}.
\end{proof}

\begin{defin}
We denote the toric variety of the fan $\Sigma_R$ by $X(\Ac,R)$ or $X(\Ac)$ and
call it the toric variety of the arrangement $(\Ac,R)$.
\end{defin}

\begin{remar}
For a fixed crystallographic arrangement $(\Ac,R)$, choosing another lattice
than $M_R$ may result in a \stsym\ fan which is not smooth.
Further, the correspondence $(\Ac,R)\mapsto\Sigma_R$ extends
by its definition to a correspondence between rational simplicial
arrangements and simplicial \stsym\ fans. However,
there exist rational simplicial non-crystallographic arrangements, i.e.,
there is a basis with respect to which all co\-vectors of the hyperplanes have
rational coordinates, although there is no lattice $M$ for which the
corresponding fan is smooth.
The smallest example in dimension three has $12$ hyperplanes and is
denoted $\Ac(12,1)$ in \cite{p-G-09} (compare the catalogue \cite{p-G-09}
with the list in \cite{p-CH09c}).
\end{remar}

\begin{remar}
Any smooth complete fan in $N$ can be visualized by a triangulation of
the sphere $S=V\smallsetminus\{0\}/\RR_{>0}$, see \cite[Sect.\ 1.7]{Oda}.
Such a fan is \csym\ if and only if its triangulation is invariant under
the reflection $p\leftrightarrow -p$ of $S$, and the
strong symmetry of the fan $\Sigma_R$ of a crystallographic arrangement
$(\Ac,R)$ means that its triangulation is induced by the hyperplane sections
$H\cap S$, $H\in\Ac$.

In particular in dimension $3$ Tsuchihashi's characterization by admissible
$N$-weights (see \cite[Cor.\ 1.32]{Oda}) for strongly symmetric fans agrees
with the classification in \cite{p-CH09c}. For higher dimension the correspondence
to Weyl groupoids produces similar conditions if one considers certain products
of reflections.

For a geometric interpretation of the strong symmetry of $X(\Ac)$ see
Rem.\ \ref{georem}.
\end{remar}
\begin{examp}\label{exwg37}
The crystallographic arrangement with the largest number of hyperplanes in dimension three
has $37$ hyperplanes. Fig.\ \ref{wg37} is a projective image of this \emph{sporadic} arrangement:
The triangles correspond to the maximal cones; one hyperplane is the line at infinity.
\begin{figure}
\begin{center}
\includegraphics[width=1.3\textwidth,clip=true,trim=100pt 200pt 0pt 200pt]{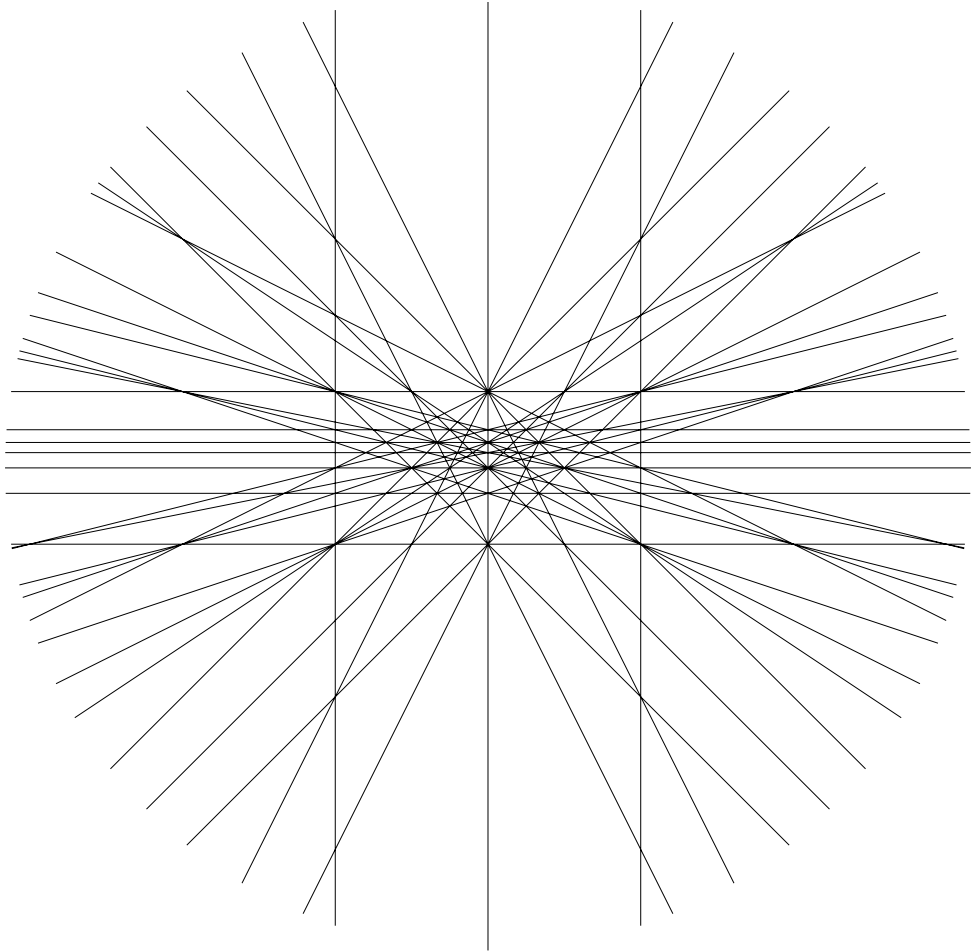}
\end{center}
\caption{The largest crystallographic arrangement in dimension three
(see Example \ref{exwg37})\label{wg37}}
\end{figure}
\end{examp}
\vspace{-9pt}
We further obtain a new proof of \cite[Prop.\ 5.3]{BC10}:

\begin{corol}\label{restrU}
Let $\Ac$ be a crystallographic arrangement and $E$ be an intersection
of hyperplanes of $\Ac$. Then the restriction $\Ac^E$ of $\Ac$ to $E$,
\[ \Ac^E:=\{E\cap H\mid H \in\Ac,\:\: E\nsubseteq H\} \]
is a crystallographic arrangement.
\end{corol}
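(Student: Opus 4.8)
The plan is to avoid any direct manipulation of the integrality condition~(I) of Definition~\ref{def:cryarr} and instead transport the problem through the correspondence of Theorem~\ref{thmcor}, using the restriction Lemma~\ref{lemrestr} as the essential input.

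First I would replace $(\Ac,R)$ by its associated \stsym\ smooth fan $\Sigma_R$ in the lattice $N_R$, as furnished by Lemma~\ref{lem1}; recall that $\Sigma_R$ consists of exactly the intersections of closed chambers of $\Ac$. The decisive geometric point is that the flat $E$ is the linear span of a cone of $\Sigma_R$. To produce such a cone, choose a point $p\in E$ lying on no hyperplane $H\in\Ac$ with $E\nsubseteq H$. Since $\Sigma_R$ is complete, $p$ lies in the relative interior of a unique cone $\sigma\in\Sigma_R$; and since $\Sigma_R$ is the fan of closed chambers of the simplicial arrangement $\Ac$, the relatively open cones are the relatively open faces of the chambers, so $\langle\sigma\rangle_\RR$ equals the intersection of all hyperplanes of $\Ac$ containing $p$. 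By the choice of $p$ these are precisely the hyperplanes containing $E$, whose intersection is $E$; hence $\langle\sigma\rangle_\RR=E$.

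With this cone $\sigma$ in hand, Lemma~\ref{lemrestr} shows that $\Sigma_R^E=\{\eta\cap E\mid\eta\in\Sigma_R\}$ is a \stsym\ smooth fan in the lattice $N_R\cap E$ of the vector space $E$. Applying Lemma~\ref{lem2} to $\Sigma_R^E$ then produces a set $R^E\subseteq E^*$ for which $(\Ac(\Sigma_R^E),R^E)$ is a crystallographic arrangement in $E$, where $\Ac(\Sigma_R^E)=\{\langle\tau\rangle_\RR\mid\tau\in\Sigma_R^E(\dim E-1)\}$. To finish I would identify $\Ac(\Sigma_R^E)$ with $\Ac^E$: the strong symmetry of $\Sigma_R^E$ gives $\Supp\Sigma_R^E(\dim E-1)=\bigcup_{H\in\Ac,\,E\nsubseteq H}(H\cap E)$, so the walls of $\Sigma_R^E$, and hence the hyperplanes of $\Ac(\Sigma_R^E)$ in $E$, are exactly the sets $H\cap E$ with $E\nsubseteq H$. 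This is the definition of $\Ac^E$, and therefore $(\Ac^E,R^E)$ is crystallographic.

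I expect the main obstacle to be the bookkeeping in the final identification rather than any conceptual difficulty. Lemma~\ref{lemrestr} is proved by first treating the case $\dim\sigma=r-1$ and then inducting on $\dim\sigma$, so reading off $\Supp\Sigma_R^E(\dim E-1)$ as the union of the restricted hyperplanes requires making that induction explicit and checking that each $H\cap E$ with $E\nsubseteq H$ is a genuine hyperplane of $E$ (so that $\dim(H\cap E)=\dim E-1$), while confirming that no extra walls are introduced. Once this is verified, the statement is a formal consequence of the correspondence already established.
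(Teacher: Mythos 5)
Your proposal is correct and takes essentially the same route as the paper, whose entire proof of this corollary is ``Thm.~\ref{thmcor}, the fact that subfans of smooth fans are smooth, and Lemma~\ref{lemrestr}'' --- exactly the chain you run through Lemmas~\ref{lem1} and~\ref{lem2}. The only difference is that you make explicit two steps the paper leaves implicit, namely that every flat $E\in L(\Ac)$ is the span of a cone of $\Sigma_R$ (which is what licenses the appeal to Lemma~\ref{lemrestr}) and the identification of $\Ac(\Sigma_R^E)$ with $\Ac^E$, and both of these are handled correctly.
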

\begin{proof}
This follows from Thm.\ \ref{thmcor}, the fact that subfans of
smooth fans are smooth, and Lemma \ref{lemrestr}.
\end{proof}

\section{Projectivity}\label{projectivity}

Let $(\Ac,R)$ be a crystallographic arrangement and $N,M,V,V^*$ be as in
Section \ref{corresp}, $\Sigma:=\Sigma_R$.
We first prove that $X(\Ac)=X_\Sigma$ is projective by
constructing a polytope $P$ such that $X_P\cong X_\Sigma$.

\begin{propo}\label{polytop}
Let $\Ac$ be a crystallographic arrangement. For a chamber $K$ let
\[ \rho^K:=\frac{1}{2} \sum_{\alpha\in R_+^K} \alpha. \]
Then the set $\{\rho^K\mid K \in \Kc(\Ac)\}$ is the set of vertices of
an integral convex polytope $P$ in $\frac{1}{2}M$.
\end{propo}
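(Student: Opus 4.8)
The plan is to show that $P := \mathrm{conv}\{\rho^K \mid K \in \Kc(\Ac)\}$ is a polytope whose vertices are exactly the $\rho^K$, and that these vertices lie in the lattice $\frac{1}{2}M$. The integrality is immediate: each $\rho^K = \frac{1}{2}\sum_{\alpha \in R^K_+}\alpha$ is a half-integer combination of elements of $R \subseteq M$, so $\rho^K \in \frac{1}{2}M$. The real content is that no $\rho^K$ lies in the convex hull of the others, i.e.\ every $\rho^K$ is genuinely a vertex of $P$.

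To produce vertices, the natural strategy is to exhibit, for each chamber $K$, a linear functional on $V^*$ (equivalently a vector $v_K \in V$) that attains its strict maximum over the finite set $\{\rho^{K'}\}$ precisely at $\rho^K$. First I would pick $v_K$ to be an interior point of the chamber $K$ itself, viewed as a vector in $V = (V^*)^*$ via the pairing. The key computation is then to compare $v_K(\rho^K)$ with $v_K(\rho^{K'})$ for a neighboring chamber $K'$ sharing a wall $\alpha^\perp$ with $K$. Crossing that wall changes $R^K_+$ into $R^{K'}_+$ by flipping the sign of exactly those roots in $R$ that are positive multiples of $\alpha$ on one side; since $\RR\alpha \cap R = \{\pm\alpha\}$, only $\pm\alpha$ changes sign. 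Hence $\rho^K - \rho^{K'} = \alpha$ (up to sign, with $\alpha \in B^K$ pointing into $K$), and $v_K(\rho^K - \rho^{K'}) = v_K(\alpha) > 0$ because $v_K \in K$ and $\alpha \in B^K$ defines a wall with $\{\alpha\}^\vee \cap K = K$. This is the classical computation that $\rho = \frac{1}{2}\sum_{\alpha > 0}\alpha$ decreases under simple reflections, adapted to the groupoid/arrangement setting.

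Having established that $\rho^K$ strictly dominates all of its wall-neighbors under $v_K$, I would upgrade this to strict domination over \emph{all} other chambers by a connectivity-and-monotonicity argument: any chamber $K'$ can be reached from $K$ by a gallery crossing one wall at a time, and I want to argue that $v_K(\rho^{K'})$ strictly decreases along such a gallery, at least along a gallery chosen so that each crossing moves away from $K$ in the $v_K$-direction. The cleanest way is to show that for the specific functional $v_K$, the value $v_K(\rho^{K'})$ equals $\frac{1}{2}\sum_{\alpha \in R^{K'}_+} v_K(\alpha)$, and that $R^{K'}_+$ differs from $R^K_+$ by sign-flipping exactly the roots separating $K$ from $K'$; each such flipped root contributes $-v_K(\alpha)$ where $\alpha$ is positive on $K$, giving $v_K(\alpha) > 0$ and hence a strict decrease. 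This realizes $\rho^K$ as the unique maximizer of $v_K$, so it is a vertex.

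The main obstacle I anticipate is making the sign-flip bookkeeping precise in the crystallographic-arrangement language rather than the Weyl-group language, because here there is no ambient reflection group acting transitively on chambers — the arrangement is only simplicial. Specifically, I must verify that the separating set (the roots $\alpha \in R$ with $\alpha > 0$ on $K$ but $\alpha < 0$ on $K'$) is exactly the symmetric difference governing $\rho^K - \rho^{K'}$, and that $v_K(\alpha) > 0$ for every such separating root, not merely for walls of $K$. This last point needs that $v_K \in K$ implies $v_K(\alpha) > 0$ for \emph{all} $\alpha \in R$ positive on $K$, which follows directly from $R^K_+ \subseteq \sum_{\beta \in B^K}\RR_{\ge 0}\beta$ and $v_K \in K = \{\,\sum a_i \alpha_i^\vee \mid a_i > 0\,\}$, giving $v_K(\alpha) > 0$. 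Once this positivity is in hand the argument closes, and the integrality statement requires no further work.
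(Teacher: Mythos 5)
Your argument is correct, but it runs dual to the paper's. The paper defines $P$ by inequalities rather than as a convex hull: for each chamber $K$ it sets $S^K:=\rho^K-\langle\alpha\mid\alpha\in B^K\rangle_{\RR_{\ge 0}}$ and puts $P:=\bigcap_{K\in\Kc(\Ac)}S^K$; the whole proof is then the identity $\rho^K=\rho^{K'}-\sum_{\alpha\in R_+^{K'}\smallsetminus R_+^K}\alpha$ together with $R_+^{K'}\subseteq\langle B^{K'}\rangle_{\RR_{\ge 0}}$, which shows $\rho^K\in S^{K'}$ for every $K'$, hence $\rho^K\in P$; since $P\subseteq S^K$ and $\rho^K$ is the apex of the strongly convex cone $S^K$, each $\rho^K$ is a vertex. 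You instead take $P$ to be the convex hull of the $\rho^K$ and exhibit, for each $K$, a strictly separating functional given by evaluation at a point $v_K\in K$; this rests on exactly the same root-flipping identity $\rho^K-\rho^{K'}=\sum_{\alpha\in R_+^K\smallsetminus R_+^{K'}}\alpha$, with the paper's cone-membership step replaced by its dual statement, the positivity $\alpha(v_K)>0$ for all $\alpha\in R_+^K$. Each route has its advantages: yours gets boundedness of $P$ and the inclusion ``vertices of $P\subseteq\{\rho^K\}$'' for free (both are left implicit in the paper's intersection-of-cones construction), while the paper's gives an explicit facet description from which one reads off that the normal cone of $P$ at $\rho^K$ is $\overline{K}$, i.e.\ that the normal fan of $P$ is $\Sigma$ --- which is what the subsequent corollary $X_P\cong X_\Sigma$ actually uses. (Your functionals recover this too: $u\in K$ implies $\rho^K$ is the unique maximizer of $u$ on $P$, so $K$ lies in the normal cone at $\rho^K$.)

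Two points to tighten. First, strictness of your inequality needs $R_+^K\smallsetminus R_+^{K'}\ne\emptyset$ for $K'\ne K$; this is true because $R_+^K=R_+^{K'}$ would force $K=K'$ (both chambers are then the same connected component of the complement of $\bigcup_{H\in\Ac}H$), but you should say it. Second, the gallery and wall-crossing discussion in your middle paragraph is dispensable --- monotonicity of $v_K(\rho^{K'})$ along an arbitrary gallery is in fact not clear --- but your final computation with the full separating set bypasses galleries entirely, so nothing is lost by deleting that paragraph.
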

\begin{proof}
For each chamber $K$ define a simplicial cone by
\[ S^K:=\rho^K-\langle\alpha\mid \alpha\in B^K\rangle_{\RR_{\ge 0}}. \]
Let $P$ be the polytope
\[ P:=\bigcap_{K \in \Kc(\Ac)} S^K. \]
Let $K$ be a chamber. We prove that $\rho^K$ is a vertex of $P$ by showing
$\rho^K\in P$: Let $K'$ be a chamber.
Notice first that for $\alpha\in R$ we have
\[ \alpha\in R_+^K \quad\Longleftrightarrow\quad -\alpha\in R\smallsetminus R_+^K,\]
which implies $R_+^{K'}\smallsetminus R_+^K=-R_+^K\smallsetminus R_+^{K'}$. Thus
\[ \rho^K = \rho^{K'}
-\frac{1}{2}\sum_{\alpha\in R_+^{K'}\smallsetminus R_+^K}\alpha
+\frac{1}{2}\sum_{\alpha\in R_+^K\smallsetminus R_+^{K'}}\alpha
= \rho^{K'}-\sum_{\alpha\in R_+^{K'}\smallsetminus R_+^K}\alpha\in S^{K'}. \]
\end{proof}

\begin{remar}
The set $\{\rho^K\mid K \in \Kc(\Ac)\}$ of the last proposition is
the orbit of one fixed $\rho^K$ under the action of the Weyl groupoid $\Wc(\Ac)$
since for a simple root $\alpha\in B^K$ we have $\sigma_\alpha(\rho^K)=\rho^K-\alpha$
(see \cite{p-CH09a}).
\end{remar}

\begin{corol}
Let $\Ac$ be a crystallographic arrangement. Then $X_\Sigma$ is a projective variety
isomorphic to $X_P$, where $P$ is the polytope of Prop.\ \ref{polytop}.
\end{corol}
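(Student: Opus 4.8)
The plan is to recognize $\Sigma=\Sigma_R$ as the normal fan of the polytope $P$ of Proposition \ref{polytop} and then to quote the standard fact that the toric variety of a full-dimensional lattice polytope is projective with fan equal to its normal fan (see \cite{CLS}). Since $P\subseteq\frac12 M_R$, the dilate $2P$ is a genuine lattice polytope in $M_R$ with dual lattice $N_R$, and $2P$ has the same normal fan as $P$; so once the fan identification is done, the cited construction gives at once that $X_\Sigma=X_P$ is projective. Everything therefore reduces to computing the normal fan of $P$ and matching it with $\Sigma_R$.

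First I would record, from the proof of Proposition \ref{polytop}, the relation between vertices: for any two chambers $K,K'\in\Kc(\Ac)$,
\[ \rho^{K'}=\rho^K+\sum_{\alpha\in R_+^{K'}\smallsetminus R_+^K}\alpha. \]
Let $C_K\subseteq V$ denote the normal cone of $P$ at $\rho^K$, i.e.\ the set of $u$ for which $\langle\,\cdot\,,u\rangle$ attains its maximum over $P$ at $\rho^K$. I would fix $K$ and prove the inclusion $\overline K\subseteq C_K$. For $u\in\overline K=\bigcap_{\alpha\in B^K}\{\alpha\}^\vee$ and any chamber $K'$,
\[ \langle\rho^{K'},u\rangle-\langle\rho^K,u\rangle=\sum_{\alpha\in R_+^{K'}\smallsetminus R_+^K}\alpha(u)\le 0, \]
because the strong convexity of $\sum_{\beta\in B^K}\RR_{\ge0}\beta$ forces $R=R_+^K\sqcup(-R_+^K)$, so every $\alpha\in R\smallsetminus R_+^K$ lies in $-\sum_{\beta\in B^K}\RR_{\ge0}\beta$ and hence satisfies $\alpha(u)\le 0$ on $\overline K$. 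Thus $\rho^K$ maximizes $\langle\,\cdot\,,u\rangle$, i.e.\ $u\in C_K$.

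To upgrade this to $C_K=\overline K$ I would use that, by Lemma \ref{lem1} and Lemma \ref{AtoSigma}, the closed chambers $\overline K$ are exactly the maximal cones of $\Sigma_R$; they are strongly convex, have pairwise disjoint interiors, and cover $V$. The vertices $\rho^K$ are pairwise distinct (the relation above shows $\rho^{K'}-\rho^K$ is a nonempty sum of elements of the pointed cone $\sum_{\beta\in B^{K'}}\RR_{\ge0}\beta$, hence nonzero when $K'\ne K$), so the normal cones $C_K$ also have pairwise disjoint interiors. Given $\overline K\subseteq C_K$ for all $K$ and $\bigcup_K\overline K=V$, any $u\in\operatorname{int}(C_K)$ lies in some $\overline{K'}\subseteq C_{K'}$; uniqueness of the maximizer on the interior of a normal cone forces $\rho^{K'}=\rho^K$, hence $K'=K$ and $\operatorname{int}(C_K)\subseteq\overline K$, giving $C_K=\overline K$. (Strong convexity of the $\overline K$ also shows $P$ is full-dimensional, as required.) Consequently the normal fan of $P$ and $\Sigma_R$ have the same maximal cones, so the two complete fans coincide.

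The one delicate point, and the main obstacle I anticipate, is the sign convention of the normal fan: with inner rather than outer normals the cone attached to $\rho^K$ comes out as $-\overline K$ instead of $\overline K$. This is harmless here precisely because $\Sigma_R$ is centrally symmetric by Lemma \ref{AtoSigma}, so $\{-\overline K\mid K\in\Kc(\Ac)\}=\{\overline{K'}\mid K'\in\Kc(\Ac)\}=\Sigma_R(r)$ and the identification of the normal fan with $\Sigma$ holds under either convention. With the normal fan equal to $\Sigma$, the polytope construction of \cite{CLS} yields $X_P\cong X_\Sigma$ and the projectivity of $X_\Sigma$.
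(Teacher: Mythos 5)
Your proposal is correct and takes essentially the same route as the paper: the paper's proof of this corollary is exactly the citation of Prop.~\ref{polytop} together with \cite[Thm.~2.22]{Oda}, so the real content in both cases is the identification of $\Sigma_R$ with the normal fan of $P$ (harmless up to sign by central symmetry, as you note), which you spell out explicitly via the vertex relation $\rho^{K'}=\rho^K+\sum_{\alpha\in R_+^{K'}\smallsetminus R_+^K}\alpha$ from the proof of Prop.~\ref{polytop}, where the paper instead leaves it implicit in the construction $P=\bigcap_K S^K$ and cites \cite{Oda} rather than \cite{CLS}. One small imprecision: the dichotomy $R=R_+^K\sqcup(-R_+^K)$ does not follow from strong convexity alone (that only gives $R_+^K\cap(-R_+^K)=\emptyset$); the covering half is the statement $R\subseteq\pm\sum_{\alpha\in B^K}\NN_0\alpha$ recorded in the Remark following Def.~\ref{def:cryarr} (see \cite{p-C10}), which is also what the paper uses tacitly in the proof of Prop.~\ref{polytop}.
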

\begin{proof}
This is Prop.\ \ref{polytop} and \cite[Thm.\ 2.22]{Oda}.
\end{proof}

We now describe an explicit immersion of $X_\Sigma$ into $\PP_1^R\cong\PP_1^{2n}$.

\begin{defin}
For any $\sigma\in\Sigma$, $\alpha\in R$ let
\[ s_\alpha(\sigma) = \begin{cases}
+1 & \text{ if } \alpha(\sigma)=\RR_{\ge 0} \\
\ \ 0 & \text{ if } \alpha(\sigma)=\{0\} \\
-1 & \text{ if } \alpha(\sigma)=\RR_{\le 0}
\end{cases} \]
and let $s(\sigma)=(s_\alpha(\sigma))_{\alpha\in R}$.
\end{defin}

\begin{defin} Let $2n=|R|$, let $V'$ be a $2n$-dimensional vector space
over $\RR$ and $(e_\alpha)_{\alpha\in R}$ be a basis of $V'^*$. Further,
let $M':=\ZZ\{e_\alpha\mid \alpha\in R\}\subseteq V'^*$ be the lattice
generated by this basis and let $N'$ be the dual lattice.
Then $\Ac':=\{e_{\alpha}^\perp\mid \alpha\in R\}$ is a Boolean arrangement
and we call the corresponding fan $\Sigma':=\Sigma(\Ac')$ a \emph{Boolean fan}.
Notice that
\[ X_{\Sigma'}\cong \PP_1^{2n}. \]
\end{defin}

Consider the homomorphism $M'\rightarrow M$, $e_\alpha\mapsto \alpha$ for
$\alpha\in R$ and its dual
\[ \varphi : N\rightarrow N',\quad n \mapsto (\alpha(n))_{\alpha\in R}. \]

\begin{lemma}
Choose a chamber $K$. Then with respect to the basis ${B^K}^*$ of $N$ the map
$\varphi$ is represented by a matrix of the form
\[ \begin{pmatrix} 1&&0\\&\ddots&\\0&&1\\ *&\cdots &*\\\vdots &&\vdots \end{pmatrix}. \]
It follows that $\varphi$ is a split monomorphism and in particular
$N'/\varphi(N)$ is torsion free.
\end{lemma}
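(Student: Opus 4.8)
The plan is to exhibit the claimed $r\times r$ identity block explicitly and then read off both conclusions from pure module theory.

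First I would pin down the role of the crystallographic hypothesis, since this is the only place it enters. By condition (I) of Definition \ref{def:cryarr} we have $R \subseteq \sum_{\alpha\in B^K}\ZZ\alpha$; because $B^K\subseteq R$ and $M=M_R=\sum_{\alpha\in R}\ZZ\alpha$, this forces $M = \sum_{\alpha\in B^K}\ZZ\alpha$. As $B^K$ is already an $\RR$-basis of $V^*$ (it is a basis because $\Ac$ is simplicial), it is in fact a $\ZZ$-basis of $M$ — exactly the point used in the proof of Lemma \ref{lem2}. I write $B^K=\{\alpha_1,\ldots,\alpha_r\}$ and let ${B^K}^*=\{\alpha_1^\vee,\ldots,\alpha_r^\vee\}$ be the dual $\ZZ$-basis of $N$.

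Next I would compute the matrix of $\varphi$ in these bases. Since $\varphi(n)=(\alpha(n))_{\alpha\in R}$, the $j$-th column of the matrix is $(\alpha(\alpha_j^\vee))_{\alpha\in R}$, so the entry in the row indexed by $\alpha\in R$ is $\alpha(\alpha_j^\vee)$, an integer because $\alpha\in M$ and $\alpha_j^\vee\in N$. Ordering $R$ so that $\alpha_1,\ldots,\alpha_r$ label the first $r$ rows, the defining property of the dual basis gives $\alpha_i(\alpha_j^\vee)=\delta_{ij}$ for $1\le i\le r$; hence the top $r\times r$ block equals the identity $I_r$, and the remaining rows are the integers $\alpha(\alpha_j^\vee)$ for $\alpha\in R\setminus B^K$, which are the entries denoted by $*$ in the statement.

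Finally I would deduce the structural claims. Projection of $N'=\ZZ\{e_\alpha^*\}$ onto its first $r$ coordinates is a homomorphism $\psi:N'\to N$ with $\psi\circ\varphi=\id_N$ — this is precisely the statement that the top block is $I_r$. Thus $\varphi$ is a split monomorphism and $N'=\varphi(N)\oplus\ker\psi$, so $N'/\varphi(N)\cong\ker\psi$ is a subgroup of the free abelian group $N'$ and is therefore torsion free. I do not expect any genuine obstacle here: the only real content is the first step, namely that crystallographicity upgrades $B^K$ from an $\RR$-basis to a $\ZZ$-basis of $M$, after which the whole lemma is the elementary observation that an integral matrix containing an identity submatrix of full rank splits.
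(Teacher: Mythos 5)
Your proof is correct and takes exactly the approach the paper intends -- indeed the paper states this lemma without any proof, treating it as the evident computation you spell out: condition (I) upgrades $B^K$ to a $\ZZ$-basis of $M$, the dual-basis relations $\alpha_i(\alpha_j^\vee)=\delta_{ij}$ produce the identity block with the remaining rows being the integers $\alpha(\alpha_j^\vee)$, and projection onto the $B^K$-indexed coordinates gives a left inverse $\psi$ with $\psi\circ\varphi=\id_N$, whence $N'/\varphi(N)\cong\ker\psi$ is torsion free. There is nothing to correct.
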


\begin{lemma}\ 
\begin{enumerate}
\item The map $\varphi$ is a map of fans $(N,\Sigma)\rightarrow (N',\Sigma')$.
\item For any $\sigma'\in\Sigma'$, $\varphi(V)\cap\sigma'\in\Sigma$.
\end{enumerate}
\end{lemma}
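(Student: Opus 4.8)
The plan is to establish both claims by working in the coordinates provided by the previous lemma, where $\varphi$ is represented as a matrix with an $r \times r$ identity block on top and arbitrary integer entries below, with respect to the basis ${B^K}^*$ of $N$ and the standard basis of $N'$. The crucial structural observation is that the components of $\varphi(n) = (\alpha(n))_{\alpha\in R}$ are exactly the values $\alpha(n)$, and the sign pattern $s(\sigma)$ is defined precisely so that it records on which side of each hyperplane $\alpha^\perp$ a cone $\sigma$ lies. So the combinatorics of $\Sigma'$ (the Boolean fan, whose maximal cones are the orthants) is governed by sign vectors, and $\varphi$ will be compatible with fans exactly when the sign vector of $\varphi(\sigma)$ is constant on the relative interior of $\sigma$ and lands inside a single orthant.

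For part (1), I would take a cone $\sigma \in \Sigma$ and show $\varphi(\sigma)$ is contained in a cone of $\Sigma'$. By Lemma \ref{AtoSigma} every cone of $\Sigma$ is an intersection of closed chambers, so it suffices to understand where $\varphi$ sends points of a fixed closed chamber $\overline{K'}$. For $n \in \overline{K'}$ and $\alpha \in R$, the sign of $\alpha(n)$ depends only on whether $\alpha \in R_+^{K'}$ or $-\alpha \in R_+^{K'}$ (i.e.\ $\alpha \in -R_+^{K'}$), since $R_+^{K'} = R \cap \sum_{\alpha\in B^{K'}}\RR_{\ge 0}\alpha$ consists exactly of the covectors nonnegative on $\overline{K'}$. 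Hence $s_\alpha(n)$ is constant (up to vanishing on boundary faces) over the chamber, meaning $\varphi(\overline{K'})$ lies in a single orthant of $N'_\RR$, that is in a maximal cone of $\Sigma'$. Since $\Sigma'$ is a fan and faces of cones map to faces, $\varphi(\sigma) \subseteq \sigma'$ for some $\sigma' \in \Sigma'$; combined with the linearity of $\varphi$ this gives that $\varphi$ is a map of fans. I would make this precise by checking that $s(\sigma)$ is well defined (the three cases in its definition are exhaustive exactly because $\alpha(\sigma)$ is a half-line, a point, or the opposite half-line when $\sigma \subseteq \alpha^\perp$ or when $\sigma$ lies to one side), which is guaranteed because $\sigma$ is a cone and $\alpha$ is linear.

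For part (2), given $\sigma' \in \Sigma'$ I must show $\varphi(V) \cap \sigma' \in \Sigma$. The set $\varphi(V) \cap \sigma'$ is the preimage cone $\{n \in V \mid s_\alpha(\varphi(n)) \text{ matches the sign pattern of } \sigma'\}$, i.e.\ the set of $n$ with $\alpha(n) \ge 0$ for $\alpha$ in some subset $S_+ \subseteq R$ and $\alpha(n) \le 0$ for $\alpha$ in a complementary subset, cut out by the defining inequalities of the orthant $\sigma'$ pulled back through $\varphi$. This is an intersection of closed half-spaces $\{\alpha\}^\vee$ and hyperplanes $\alpha^\perp$ with $\alpha \in R$, hence an intersection of closed chambers and their faces of $\Ac = \Ac(\Sigma)$. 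By Lemma \ref{AtoSigma}, the set of all intersections of closed chambers of $\Ac(\Sigma)$ is exactly $\Sigma$, so $\varphi(V)\cap\sigma' \in \Sigma$. The main obstacle I anticipate is bookkeeping: I must verify that pulling back an orthant really produces an intersection of the $\{\alpha\}^\vee$ (not some coarser region), that the resulting cone is strongly convex and rational in $N$, and that partial sign patterns (zeros on some coordinates, corresponding to lower-dimensional faces $\sigma'$) pull back to the correct lower-dimensional cones; all of this rests on $\varphi$ being a split monomorphism so that $\varphi(V)$ meets each orthant in a genuine cone rather than collapsing, which is precisely what the preceding lemma guarantees.
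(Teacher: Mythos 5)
Your argument is correct and is essentially the paper's own proof: for (1) you match sign vectors so that $\varphi(\sigma)$ lands in the cone $\sigma'\in\Sigma'$ with $s(\sigma')=s(\sigma)$, and for (2) you pull back the defining conditions of $\sigma'$ to sign conditions $\varepsilon_\alpha\alpha\ge 0$ (or $\alpha=0$) on every $\alpha\in R$ and identify the resulting region as an intersection of closed chambers, hence a cone of $\Sigma$ by Lemma \ref{AtoSigma}. The only difference is organizational---the paper first treats maximal $\sigma'$ and obtains general $\sigma'$ as intersections of these using the fan axioms, while you treat arbitrary sign patterns (including zero coordinates) in one step---and both texts leave the same key geometric step, namely that a region carrying a sign constraint for \emph{every} root of $R$ is an intersection of closed chambers, as a brief assertion rather than spelling it out.
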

\begin{proof}
(1) Let $\sigma\in\Sigma$ and let $\sigma'\in\Sigma'$ be the cone
with $s(\sigma')=s(\sigma)$. Then $\varphi(\sigma)\subseteq \sigma'$.\\
(2) If $\sigma'\in\Sigma'$ is maximal, let $s(\sigma')=(\varepsilon_1,\ldots,\varepsilon_{2n})$
with $\varepsilon_\nu\in\{\pm 1\}$, and let
\[ \tau=\bigcap_\nu\{x\in V\mid \varepsilon_\nu \alpha_\nu(x)\ge 0 \}. \]
Then $\tau\in\Sigma$ and $\tau=\varphi^{-1}(\sigma')$.
If $\sigma'$ is arbitrary, then $\sigma'=\sigma_1'\cap\ldots\cap\sigma_k'$
for maximal $\sigma_i'$ and then $\varphi^{-1}(\sigma')=\bigcap\varphi^{-1}(\sigma_i')\in\Sigma$.
\end{proof}

\begin{corol}\label{f_inj}
The induced toric morphism $f=\varphi_* : X_\Sigma\rightarrow X_{\Sigma'}$ is proper
and  $X_\Sigma\twoheadrightarrow f(X_{\Sigma})$ is the normalization of the closed (reduced) image.
\end{corol}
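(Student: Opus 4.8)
The plan is to verify three properties of $f$---that it is proper, birational onto its image, and quasi-finite---and then to invoke the universal property of normalization. For properness: since $\Sigma$ is \stsym\ it is complete, so $X_\Sigma$ is a complete variety, while $X_{\Sigma'}\cong\PP_1^{2n}$ is separated; hence every morphism $X_\Sigma\to X_{\Sigma'}$ is proper. (Equivalently one may apply the fan criterion $\varphi_\RR^{-1}(\Supp\Sigma')=\Supp\Sigma$, which holds at once because both fans are complete.) In particular $f$ is closed, so $Z:=f(X_\Sigma)$ is closed; we endow it with its reduced structure and write $g\colon X_\Sigma\to Z$ for the induced map.

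Next, birationality onto $Z$. On the big tori, $f$ is dual to the lattice map $\hat\varphi\colon M'\to M$, $e_\alpha\mapsto\alpha$. As $M=M_R=\sum_{\alpha\in R}\ZZ\alpha$, this map is surjective, so $T_N\to T_{N'}$ is a closed immersion onto a subtorus (this is the split monomorphism of the preceding lemma). Thus $f$ maps the dense open torus $T_N\subseteq X_\Sigma$ isomorphically onto $f(T_N)$, which is dense in $Z$; hence $g$ induces an isomorphism of function fields and is birational.

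The heart of the matter is quasi-finiteness. By the orbit--cone correspondence, for $\sigma\in\Sigma$ the map $f$ sends $\orb(\sigma)$ into $\orb(\sigma')$, where $\sigma'\in\Sigma'$ is the smallest cone containing $\varphi(\sigma)$, and there $f$ is the homomorphism of tori induced by $\bar\varphi\colon N(\sigma)\to N'(\sigma')$. Such a homomorphism has finite fibres exactly when $\bar\varphi\otimes\QQ$ is injective, i.e.\ when $\varphi_\RR^{-1}(\langle\sigma'\rangle_\RR)=\langle\sigma\rangle_\RR$. Because $\Sigma'$ is a Boolean fan, $\langle\sigma'\rangle_\RR$ is the coordinate subspace on which the $\alpha$-coordinate vanishes precisely for those $\alpha$ with $\alpha(\sigma)=\{0\}$, so $\varphi_\RR^{-1}(\langle\sigma'\rangle_\RR)=\bigcap_{\alpha:\,\sigma\subseteq\alpha^\perp}\alpha^\perp$. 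It therefore remains to prove the arrangement identity
\[ \langle\sigma\rangle_\RR=\bigcap_{\alpha\in R:\ \sigma\subseteq\alpha^\perp}\alpha^\perp \]
for every $\sigma\in\Sigma$. Writing $\sigma$ as a face of a chamber $\overline K$ generated by the basis dual to $B^K=\{\alpha_1,\dots,\alpha_r\}$ (using Lemma \ref{AtoSigma}), the covectors $\alpha\in B^K$ vanishing on $\sigma$ are exactly those indexing the complementary walls, and the intersection of their kernels is $\langle\sigma\rangle_\RR$; any further hyperplane of $\Ac$ through $\sigma$ already contains $\langle\sigma\rangle_\RR$ and does not shrink the intersection. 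I expect this identity to be the main obstacle, since it is the only step genuinely using that $\Ac$ is simplicial and that $\Sigma$ consists of intersections of closed chambers; the remaining inputs are formal. Granting it, every orbit map has finite fibres, so $g$ is quasi-finite.

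Finally, $g$ is proper and quasi-finite, hence finite. Since $X_\Sigma$ is normal, being a toric variety, and $g$ is finite and birational onto $Z$, the uniqueness of normalization identifies $g$ with the normalization of the reduced image $Z$, which is the assertion.
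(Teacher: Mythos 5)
Your proof is correct, but it takes a genuinely different route from the paper's, whose entire proof is a citation of the general result on equivariant morphisms of toric varieties, \cite[Prop.\ 1.14]{Oda}; the inputs that citation needs are supplied by the lemmas immediately preceding the corollary (the properness criterion $\varphi_\RR^{-1}(\Supp\Sigma')=\Supp\Sigma$ holds trivially because both fans are complete, and the fact that $\varphi$ is a split monomorphism, i.e.\ that $M'\to M$ is surjective, guarantees that $f$ embeds the big torus, which is what makes the normalization statement applicable). You instead reprove the principle by hand: properness from completeness of $X_\Sigma$ and separatedness of $X_{\Sigma'}$; birationality onto the image from the surjectivity of $M'\to M$, $e_\alpha\mapsto\alpha$; quasi-finiteness orbit by orbit, reduced via the Boolean structure of $\Sigma'$ to the identity $\langle\sigma\rangle_\RR=\bigcap_{\alpha\in R,\;\sigma\subseteq\alpha^\perp}\alpha^\perp$, which holds as you argue because every $\sigma\in\Sigma$ is a face of a simplicial chamber cone (Lemma \ref{AtoSigma}) so the covectors of $B^K$ complementary to $\sigma$ already cut out $\langle\sigma\rangle_\RR$; and finally proper plus quasi-finite implies finite, and a finite birational morphism from a normal variety onto its reduced irreducible image is the normalization. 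All these steps are sound, including the arrangement identity you flagged as the crux. The trade-off: the paper's citation is short and hides the toric generalities in Oda's proposition, while your argument is self-contained, makes the finiteness of $f$ onto its image explicit, and isolates the one point where the simplicial chamber structure of $\Sigma$ enters; it also sits naturally next to Prop.\ \ref{closed_imm}, where the paper upgrades the conclusion to a closed embedding via the semigroup-level surjectivity $\sigma'^\vee\cap M'\twoheadrightarrow \sigma^\vee\cap M$, which is essentially the dual of your identity.
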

\begin{proof}
See \cite[Prop.\ 1.14]{Oda}.
\end{proof}

\begin{propo}\label{closed_imm}
The map $X_\Sigma\rightarrow X_{\Sigma'}$ is a closed embedding of nonsingular
toric varieties.
\end{propo}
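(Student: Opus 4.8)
\section*{Proof proposal}

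The plan is to verify that $f$ is a closed immersion, which may be checked on a Zariski-open cover of the target. Cover $X_{\Sigma'}$ by the affine charts $U_{\sigma'}$, $\sigma'\in\Sigma'$ maximal, and set $\tau:=\varphi^{-1}(\sigma')$; then $\tau\in\Sigma$ by the preceding lemma. Since $\Sigma$ is a fan, any $\pi\in\Sigma$ with $\varphi(\pi)\subseteq\sigma'$ satisfies $\pi\subseteq\tau$, so $\pi=\pi\cap\tau$ is a face of $\tau$; by the usual description of preimages under toric morphisms (cf.\ \cite{CLS}) this gives $f^{-1}(U_{\sigma'})=\bigcup_{\varphi(\pi)\subseteq\sigma'}U_\pi=U_\tau$, a single affine chart. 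As $U_\tau\to U_{\sigma'}$ is the spectrum of the $\CC$-algebra homomorphism $\CC[\sigma'^\vee\cap M']\to\CC[\tau^\vee\cap M]$ induced by $\varphi^*\colon e_\alpha\mapsto\alpha$, the whole statement reduces to the surjectivity of semigroups
\[ \varphi^*\bigl(\sigma'^\vee\cap M'\bigr)=\tau^\vee\cap M, \]
the inclusion ``$\subseteq$'' being automatic from $\varphi(\tau)\subseteq\sigma'$.

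Let $\epsilon=(\epsilon_\alpha)_{\alpha\in R}\in\{\pm1\}^R$ be the signs defining $\sigma'$, so that $\sigma'^\vee\cap M'=\sum_{\alpha\in R}\NN_0\,\epsilon_\alpha e_\alpha$ and the left-hand side above is the subsemigroup of $M$ generated by $P:=\{\epsilon_\alpha\alpha\mid\alpha\in R\}$; note $P\subseteq\tau^\vee$. For the right-hand side we use smoothness: $\Sigma$ being complete, $\tau$ is a face of a maximal cone $\overline K$, which by Lemma \ref{lem1} is generated by the $\ZZ$-basis $\alpha_1^\vee,\dots,\alpha_r^\vee$ of $N$ dual to $B^K=\{\alpha_1,\dots,\alpha_r\}$; hence $\tau=\langle\alpha_i^\vee\mid i\in I\rangle_{\RR_{\ge0}}$ for some $I\subseteq\{1,\dots,r\}$ and
\[ \tau^\vee\cap M=\Bigl(\sum_{i\in I}\NN_0\,\alpha_i\Bigr)+\bigl(\langle\tau\rangle_\RR^\perp\cap M\bigr),\qquad \langle\tau\rangle_\RR^\perp\cap M=\sum_{j\notin I}\ZZ\,\alpha_j. \]
The generators $\alpha_i$ ($i\in I$) are harmless: they lie in $B^K\subseteq R$ and are nonzero and nonnegative on $\tau$, so $\epsilon_{\alpha_i}\alpha_i=\alpha_i\in P$. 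Everything thus comes down to showing $\langle\tau\rangle_\RR^\perp\cap M\subseteq\langle P\rangle_{\NN_0}$.

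This last containment is the main obstacle, a saturation statement along the linear directions. Put $L:=\langle\tau\rangle_\RR^\perp$ and $R_0:=R\cap L$; each $\gamma\in R_0$ vanishes on $\tau$, so $\pm\gamma\in\tau^\vee$ and $P\cap L$ consists of one or both signs of every element of $R_0$. Two inputs finish the proof. First, since $\tau=P^\vee$ we have $\tau^\vee=\langle P\rangle_{\RR_{\ge0}}$, whose lineality space is $L$; as the quotient cone $\tau^\vee/L$ is pointed, a nonnegative combination of $P$ can lie in $L$ only when it involves solely the members of $P\cap L$, so $P\cap L$ positively spans $L$. Second --- and here the crystallographic hypothesis is indispensable --- the fan $\Star(\tau)$ is smooth and \stsym\ by Lemma \ref{stsymstar}, hence crystallographic by Theorem \ref{thmcor}, and $R_0$ is its set of roots inside the dual lattice $L\cap M$ of $N(\tau)$; therefore $R_0$, and a fortiori $P\cap L$, generates $L\cap M$ as a group. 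A finitely generated submonoid of a lattice that generates the lattice as a group and whose cone of nonnegative combinations is the whole ambient space equals the lattice; applied to $\langle P\cap L\rangle_{\NN_0}\subseteq L\cap M$ this yields $L\cap M=\langle P\cap L\rangle_{\NN_0}\subseteq\langle P\rangle_{\NN_0}$. Combining the two displayed descriptions gives the semigroup equality, so every $U_\tau\to U_{\sigma'}$ is a closed immersion; as the $U_{\sigma'}$ cover $X_{\Sigma'}$, the map $f$ is a closed embedding of the nonsingular varieties $X_\Sigma$ (smooth since $\Sigma$ is) and $X_{\Sigma'}\cong\PP_1^{2n}$. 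For a merely smooth fan the roots in $L$ need not generate $L\cap M$, so it is precisely crystallographicity of $\Star(\tau)$, together with $R=-R$ enlarging the positive hull of $P\cap L$, that makes this work.
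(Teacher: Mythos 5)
Your proof is correct, but it globalizes in the opposite direction from the paper's. The paper verifies the immersion only on the charts $U_\sigma$ for \emph{maximal} cones $\sigma\in\Sigma(r)$, where the semigroup computation is immediate (by smoothness $\sigma^\vee\cap M=\langle B^K\rangle_{\ZZ_{\ge 0}}$, and $B^K$ visibly lies in the image of $\sigma'^\vee\cap M'$ since $e_{\alpha_i}\mapsto\alpha_i$), and then must patch: it invokes properness (Cor.\ \ref{f_inj}), density of $U_\sigma$ in $X_\Sigma$ to get $f(U_\sigma)=f(X_\Sigma)\cap U'_{\sigma'}$, and an orbit argument for injectivity. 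You instead check the immersion over \emph{every} maximal chart $U_{\sigma'}$ of the target, identifying $f^{-1}(U_{\sigma'})$ with the single chart $U_\tau$, $\tau=\varphi^{-1}(\sigma')\in\Sigma$; since being a closed immersion is local on the target, no patching, properness, or separate injectivity argument is needed. The price is that $\tau$ is in general not maximal, so the semigroup equality $\varphi^*(\sigma'^\vee\cap M')=\tau^\vee\cap M$ acquires a lattice part $L\cap M$, $L=\langle\tau\rangle_\RR^\perp$, and your three-step treatment of it ($P\cap L$ positively spans $L$ by pointedness of $\tau^\vee/L$; $P\cap L$ generates $L\cap M$ as a group because $\Star(\tau)$ is again crystallographic; a positively spanning, group-generating finite subset of a lattice generates it as a monoid) is where crystallographicity enters in your version, essentially through the parabolic restriction that the paper only develops later (cf.\ Prop.\ \ref{Udelta}). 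Both arguments are sound: the paper's local step is cheaper, your global step is cheaper, and you get the extra information that $f^{-1}$ of every affine chart of $\PP_1^{2n}$ is affine.

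Two steps deserve more justification than you give them. First, the claim that $R_0=R\cap L$ \emph{is} the root set of the crystallographic structure on $\Star(\tau)$, hence generates $L\cap M$: Theorem \ref{thmcor} (via Lemma \ref{lem2}) produces \emph{some} root set for $\Star(\tau)$, namely the primitive normals in $L\cap M$ to the hyperplanes of $\Ac(\Star(\tau))$, and it generates $L\cap M$ because it contains $\ZZ$-bases. To see that this set equals $R_0$ you need that every $\gamma\in R_0$ is primitive in $L\cap M$; this holds because $\gamma$ belongs to some $\ZZ$-basis $B^K$ of $M$ and $L\cap M$ is a saturated sublattice of $M$. (Also, smoothness of $\Star(\tau)$ is the standard fact about stars of smooth fans; Lemma \ref{stsymstar} only gives strong symmetry.) Second, the monoid lemma you invoke is true but should be proved: if $S$ is a finite subset of a lattice $\Lambda$ with $\ZZ S=\Lambda$ and $\RR_{\ge 0}S$ equal to the whole ambient space, then for each $s\in S$ one has $-s\in\QQ_{\ge 0}S$ (a feasible rational linear program has a rational solution), so $-Ds\in\NN_0 S$ for some integer $D>0$, hence $-s=(D-1)s+(-Ds)\in\NN_0 S$, and therefore $\NN_0 S\supseteq\ZZ S=\Lambda$. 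With these two points filled in, your argument is complete.
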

\begin{proof}
Let $\sigma$ be a maximal cone, $K$ the corresponding chamber and
$B^K\subseteq R$ the basis of $M$. If $\sigma'\in\Sigma'$ is the cone
with $s(\sigma)=s(\sigma')$ ($\sigma=\varphi(V)\cap\sigma'$), then
the dual cone to $\sigma'$ is
\[ \sigma'^\vee = \langle e_\alpha\in R \mid s_\alpha(\sigma')=1 \rangle_{\RR_{\ge 0}}. \]
The map $\sigma'^\vee\cap M' \rightarrow \langle B^K\rangle_{\ZZ_{\ge 0}}$ is surjective,
so $\CC[\sigma'^\vee\cap M'] \rightarrow \CC[\langle B^K\rangle_{\ZZ_{\ge 0}}]$ is a surjective
homomorphism of $\CC$-algebras giving rise to the closed embedding
\[ f|_{U_\sigma} : U_\sigma \rightarrow U'_{\sigma'}, \]
where $f=\varphi_*$ as in Cor.\ \ref{f_inj}.
Because $U_\sigma$ is dense in $X_\Sigma$, the closure of $f(U_\sigma)$
equals $f(X_\Sigma)$, hence $f(U_\sigma)=f(X_\Sigma)\cap U'_{\sigma'}$.
It follows that $f(X_\Sigma)$ is smooth and that $X_\Sigma\rightarrow f(X_\Sigma)$
is an isomorphism. The injectivity of $f$ follows from that of $f|_{U_\sigma}$
because then $f|_{\orb(\sigma)}$ is an injective map $\orb(\sigma)\rightarrow\orb(\sigma')$
for each cone $\sigma$ of the orbit decomposition of $X_\Sigma$.
\end{proof}

\section{Remarks on surfaces}\label{ranktwo}
For $2$-dimensional fans of complete toric surfaces obviously
\stsym\ is the same as \csym. The classification of smooth complete toric surfaces,
see \cite[Cor.\ 1.29]{Oda} can be specialized as follows. It turns out that
this classification coincides with the classification of crystallographic arrangements
of rank two \cite{p-CH09b,p-CH09d}.

Let $\Sigma$ be the fan of a smooth complete toric surface with rays
$\rho_1,\ldots,\rho_s$ ordered counterclockwise with primitive
generators $n_1,\ldots,n_s$. There are integers $a_1,\ldots,a_s$ such that
\[ n_{j-1}+n_{j+1}+a_jn_j = 0 \]
for $1\le j\le s$ where $n_{s+1}:=n_1$, $n_0:=n_s$.
The integers $a_j$ are the self-intersection
numbers of the divisors $D_j$ associated to the rays $\rho_j$.
The circular weighted graph $\Gamma(\Sigma)$ has as its vertices on $S^1$ the rays
$\rho_j$ with weights $a_j$. These weights satisfy the identity
\[ \begin{pmatrix}0&-1\\ 1&-a_s\end{pmatrix} \cdots
\begin{pmatrix}0&-1\\ 1&-a_1\end{pmatrix} =
\begin{pmatrix}1&0\\ 0&1\end{pmatrix}. \]
Conversely, to any circular weighted graph with this identity there is a
smooth complete toric surface with this graph, unique up to toric isomorphisms.

All these surfaces are obtained from the basic surfaces $\PP_2$, $\PP_1\times\PP_1$,
and the Hirzebruch surfaces $\FF_a$, $a\ge 2$, by a finite succession of
blowing-ups. If the surface $X_\Sigma$ is \csym, then the number $s$ of rays
is even, $s=2t$, and $a_{t+j}=a_j$ for $1\le j\le t$. In this case
\[ \begin{pmatrix}0&-1\\ 1&-a_t\end{pmatrix} \cdots
\begin{pmatrix}0&-1\\ 1&-a_1\end{pmatrix} =
\begin{pmatrix}-1&0\\ 0&-1\end{pmatrix}, \]
which is ``dual'' to the formula of the classification of crystallographic
arrangements of rank two (see \cite{p-CH09b}).

Note further that sequences $a_1,\ldots,a_t$ satisfying this formula are
in bijection with triangulations of a convex $t$-gon by non-intersecting
diagonals. The numbers in Fig.\ \ref{trianggon} are
$-a_1,\ldots,-a_t$; these are certain entries of the Cartan matrices
of the corresponding Weyl groupoid (see \cite{p-CH09d} for more details).
Attaching a triangle to the $t$-gon corresponds to a double blowing-up
on the variety.

\begin{figure}
\begin{center}
\includegraphics[width=0.3\textwidth,natwidth=690,natheight=683]{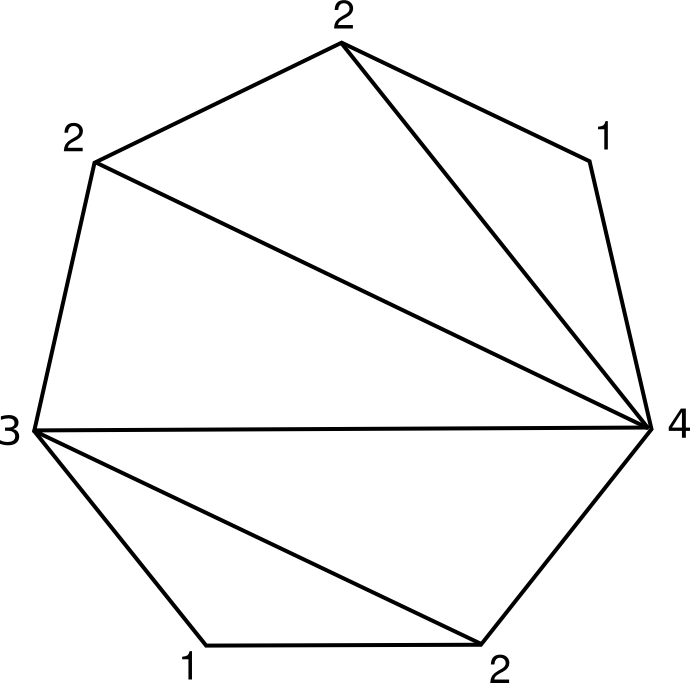}
\end{center}
\caption{Triangulation of a $t$-gon\label{trianggon}}
\end{figure}

One can subdivide a smooth complete $2$-dimensional fan $\Sigma$
by filling in the opposite $-\rho$ of each ray $\rho$ in order to get
a complete \csym\ fan $\Sigma_C$. However, $\Sigma_C$ need not be smooth
as in Example \ref{cnotsmooth}. But by inserting further pairs
$\rho,-\rho$ of rays one can desingularize the surface $X_{\Sigma_C}$ in
an even succession of blowing-ups to obtain a smooth complete \csym\ 
surface $X_{\tilde \Sigma}$ with a surjective toric morphism
$X_{\tilde \Sigma}\rightarrow X_\Sigma$.

\begin{examp}\label{cnotsmooth}
Let $\Sigma$ be the fan of the Hirzebruch surface $\FF_a$, $a\ge 2$, with
the primitive generators
\[ n_1=(1,0),\quad n_2=(0,1),\quad n_3=(-1,a),\quad n_4=(0,-1). \]
The fan $\Sigma_C$ is then obtained by adding the rays spanned by $(-1,0)$
and $(1,-a)$. This fan is no longer smooth. After filling in the rays
spanned by $(1,-\nu)$ for $1\le\nu<a$, we obtain a smooth complete \csym\ 
fan $\tilde \Sigma$ with $2a$ rays. In case $a=2$ its circular graph has
the weights $(-1,-2,-1,-2;-1,-2,-1,-2)$ (this corresponds to the
reflection arrangement of type $B$ and $C$).
\end{examp}

\begin{examp}
In good cases the \csym\ fan $\Sigma_C$ may already be smooth.
As an example let $\Sigma$ be the fan of $\PP_2$ spanned
by $(1,0)$, $(0,1)$ and $(-1,-1)$. Then the fan $\Sigma_C$ is spanned,
in counterclockwise order, by
\[ (1,0),(1,1),(0,1),(-1,0),(-1,-1),(0,-1). \]
This is the fan of the blow up $\tilde \PP_2$ of $\PP_2$ at the
three fixed points of the torus action. The corresponding arrangement
is the reflection arrangement of type $A_2$.
Its circular graph has the weights
\[ (-1,-1,-1;-1,-1,-1). \]
The same surface can be obtained by blowing up $\PP_1\times\PP_1$
in two points corresponding to the enlargement of the weighted graph
$(0,0,0,0)$ by inserting $-1$ after the first and third place,
see \cite[Cor.\ 1.29]{Oda}.

Notice that $\PP_1\times\PP_1$ corresponds
to the reducible reflection arrangement of type $A_1\times A_1$.
One should also note here that
$\tilde\PP_2$ and $\PP_1\times\PP_1$ are the only toric Del Pezzo
surfaces which are \csym.
\end{examp}

\section{Parabolic subgroupoids and toric arrangements}

If $(\Ac,R)$ is a crystallographic arrangement in $V$ and $E$
is an intersection of hyperplanes of $\Ac$, then by Cor.\ \ref{restrU}
the restriction $\Ac^E$ is again crystallographic. The dual
statement is that $\Star(\delta)$ for $\delta\in\Sigma_R$ is the
fan of a crystallographic arrangement which corresponds to a parabolic
subgroupoid, see below.
Both constructions may be translated to the corresponding toric
varieties in a compatible way. This gives rise to posets of
toric varieties which we call \emph{toric arrangements} (see Section \ref{torarr}).

\subsection{Star fans and parabolic subgroupoids}

Let $(\Ac,R)$ be a crystallographic arrangement, $\Sigma_R$
be the corresponding smooth \stsym\ fan in $\RR^r$, $\delta\in\Sigma$,
$E:=\langle\delta\rangle_\RR$ and $d:=\dim(E)$.
Let $R_E:=R\cap E^\perp$ and
\[ \Ac_E:=\{\overline{\alpha^\perp}\subseteq V/E\mid \alpha\in R_E\}, \]
and notice that $\overline{\alpha^\perp}$ are hyperplanes in $V/E$ because
$\alpha\in E^\perp$. Remark also that $\Ac_E$ only depends on $E$.
By \cite[Cor.\ 2.5]{p-CH09c}, $R_E$ is a set of real roots of a parabolic subgroupoid
of $\Wc(\Ac(\Sigma))$ (see \cite[Def.\ 2.3]{p-HW-10} for the precise definition
of a parabolic subgroupoid). Here, $\Wc(\Ac(\Sigma))$ is the Weyl groupoid
of the Cartan scheme given by the crystallographic arrangement $\Ac(\Sigma)$
as described in \cite[Prop.\ 4.5]{p-C10}.
Thus $(\Ac_E,R_E)$ is a crystallographic arrangement.
It corresponds to the fan $\Star(\delta)$:

\begin{propo}\label{Udelta}
Let $(\Ac,R)$ be a crystallographic arrangement and let $\delta$ be
a $d$-dimensional cone of the fan $\Sigma_R$.
Then the orbit closure $V(\delta)\subseteq X(\Ac)$ of $\orb(\delta)$
corresponds to the crystallographic arrangement
\[ \Ac_E = \{\overline{H}\subseteq V/E\mid H\in\Ac\}
= \{\langle\overline{\tau}\rangle_\RR \mid \overline{\tau}\in\Star(\delta)(r-d-1)\}, \]
where $E=\langle\delta\rangle_\RR$ as above.
\end{propo}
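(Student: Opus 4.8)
The plan is to identify the orbit closure $V(\delta)$ with the toric variety of the fan $\Star(\delta)$ and to recognize the latter, via the correspondence of Thm.~\ref{thmcor}, as the fan attached to $(\Ac_E,R_E)$.

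First I would recall from the definition of $\Star(\delta)$ that $V(\delta)$ is isomorphic to the toric variety of $\Star(\delta)$ in $N(\delta)=\kappa(N)$, where $\kappa\colon V\to V/E$ is the projection; so it suffices to understand the fan $\Star(\delta)$. By Lemma~\ref{stsymstar} this fan is \stsym, and it is smooth: if $\delta\subseteq\sigma\in\Sigma$, a $\ZZ$-basis of $N$ generating $\sigma$ can be chosen so that a subset generates the face $\delta$, and the $\kappa$-images of the remaining generators form part of a $\ZZ$-basis of $N(\delta)$ generating $\overline\sigma$. Hence $\Star(\delta)$ is a smooth \stsym\ fan, and by Thm.~\ref{thmcor} (via Lemma~\ref{lem2}) it is the fan of a crystallographic arrangement whose set of hyperplanes is
\[ \Ac(\Star(\delta)) = \{\langle\overline\tau\rangle_\RR \mid \overline\tau\in\Star(\delta)(r-d-1)\}, \]
the second description in the statement (note $\dim(V/E)=r-d$).

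The core step is the identification $\Ac(\Star(\delta))=\{\overline H\mid H\in\Ac,\ E\subseteq H\}$, which is $\Ac_E$. For the inclusion ``$\subseteq$'', a codimension-one cone $\overline\tau\in\Star(\delta)$ is the image $\kappa(\tau)$ of a cone $\tau\in\Sigma$ with $\delta\subseteq\tau$ and $\dim\tau=r-1$; thus $H:=\langle\tau\rangle_\RR\in\Ac$ satisfies $E=\langle\delta\rangle_\RR\subseteq H$ and $\langle\overline\tau\rangle_\RR=\kappa(H)=\overline H$. For the reverse inclusion, given $H\in\Ac$ with $E\subseteq H$, I would apply Lemma~\ref{lemrestr} with $H$ in the role of $E$ (choosing a codimension-one cone of $\Sigma$ spanning $H$), so that $\Sigma^H=\{\eta\cap H\mid\eta\in\Sigma\}$ is a complete smooth \stsym\ fan in $N\cap H$; since $\delta\subseteq E\subseteq H$ we have $\delta=\delta\cap H\in\Sigma^H$, and completeness places $\delta$ in a maximal cone $\tau\in\Sigma^H\subseteq\Sigma$ with $\dim\tau=r-1$, $\delta\subseteq\tau$ and $\langle\tau\rangle_\RR=H$. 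Then $\overline\tau\in\Star(\delta)(r-d-1)$ and $\langle\overline\tau\rangle_\RR=\overline H$, giving equality.

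Finally, to see that the resulting arrangement is precisely $(\Ac_E,R_E)$ and not merely that the hyperplanes agree, I would match the root sets: the covectors produced by Lemma~\ref{lem2} from $\Star(\delta)$ lie in $E^\perp=(V/E)^*$ and, by the parabolic-subgroupoid description recalled before the proposition (\cite[Cor.~2.5]{p-CH09c}), coincide with $R_E=R\cap E^\perp$; the uniqueness in the bijection of Thm.~\ref{thmcor} then identifies $\Star(\delta)$ with $\Sigma_{R_E}$ and hence $V(\delta)$ with $X(\Ac_E,R_E)$. The main obstacle is the reverse inclusion in the hyperplane identification, i.e.\ producing for each wall $H\supseteq E$ a codimension-one cone through $\delta$ spanning $H$; this is exactly what Lemma~\ref{lemrestr} together with completeness supplies.
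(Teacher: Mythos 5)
Your proof is correct and follows essentially the same route as the paper: the core is the same two-way inclusion between $\{\overline{H}\mid H\in\Ac,\ E\subseteq H\}$ and $\{\langle\overline{\tau}\rangle_\RR\mid\overline{\tau}\in\Star(\delta)(r-d-1)\}$, with one direction using that every $\tau\in\Sigma(r-1)$ through $\delta$ spans a hyperplane of $\Ac$ and the other producing such a $\tau$ inside a given $H\supseteq E$. You merely make explicit some steps the paper leaves implicit (existence of $\tau$ via Lemma \ref{lemrestr} and completeness, smoothness of $\Star(\delta)$, and the matching of root sets via the parabolic subgroupoid citation), which is consistent with, not divergent from, the paper's argument.
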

\begin{proof}
Let $\overline{H}$ be in the left set. Then $\delta\subseteq E\subseteq H$,
thus there exists a $\tau\in\Sigma(r-1)$ with $\delta\subseteq\tau\subseteq H$.
Hence $\langle\overline{\tau}\rangle_\RR$ is in the right hand set.

Now let $\langle\overline{\tau}\rangle_\RR$ be in the right hand set.
Then $E\subseteq \langle\tau\rangle_\RR\subseteq H$ for an $H\in\Ac$
and so $\langle\overline{\tau}\rangle_\RR\subseteq\overline{H}$. But since these
have the same dimension, they are equal.
\end{proof}

\begin{corol}
Let $\Sigma$ be a strongly symmetric fan in $\RR^r$ and $\delta,\delta'\in\Sigma$
with $\langle\delta\rangle_\RR=\langle\delta'\rangle_\RR$.
Then $\Star(\delta)=\Star(\delta')$ and $V(\delta)\cong V(\delta')$ even so
$V(\delta)\ne V(\delta')$.
\end{corol}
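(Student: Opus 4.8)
The plan is to reduce the whole statement to the principle, established in Lemma \ref{AtoSigma}, that a \stsym\ fan is completely determined by its underlying hyperplane arrangement. Write $E := \langle\delta\rangle_\RR = \langle\delta'\rangle_\RR$ and $d := \dim E$, and let $\kappa : V \to V/E$ denote the projection. Because $E$ is the same for both cones, the quotient space $V/E$ and the lattice $\kappa(N)$ agree, so that $\Star(\delta)$ and $\Star(\delta')$ are two fans sitting in one and the same lattice. By Lemma \ref{stsymstar} both are \stsym, and hence each carries a well-defined hyperplane arrangement $\Ac(\Star(\delta))$ and $\Ac(\Star(\delta'))$ in $V/E$.

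The crux is to show that this arrangement depends only on $E$ and not on the chosen cone. I would compute it directly. A codimension-one cone of $\Star(\delta)$ is of the form $\kappa(\tau)$ with $\delta \subseteq \tau \in \Sigma$ and $\dim\kappa(\tau) = (r-d)-1$; since $E \subseteq \langle\tau\rangle_\RR$ one has $\dim\kappa(\tau) = \dim\tau - d$, forcing $\dim\tau = r-1$, and then $\langle\kappa(\tau)\rangle_\RR = \kappa(\langle\tau\rangle_\RR) = \overline H$ with $H := \langle\tau\rangle_\RR \in \Ac(\Sigma)$ a hyperplane containing $E$. Conversely every $H \in \Ac(\Sigma)$ with $E \subseteq H$ arises so: writing $H$ as a union of cones in $\Sigma(r-1)$ (possible by strong symmetry, as in the proof of Lemma \ref{stsymHi}), a relative interior point of $\delta$ lies in one such $\tau$, so $\delta \subseteq \tau$ and $\langle\kappa(\tau)\rangle_\RR = \overline H$. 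Thus
\[ \Ac(\Star(\delta)) = \{\,\overline H \mid H \in \Ac(\Sigma),\ E \subseteq H\,\}, \]
and the right-hand side involves $\delta$ only through $E$. The identical computation for $\delta'$ gives the same set, since $\langle\delta'\rangle_\RR = E$.

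Now I would invoke Lemma \ref{AtoSigma}: a \stsym\ fan equals the set of all intersections of closed chambers of its arrangement. As $\Star(\delta)$ and $\Star(\delta')$ are \stsym\ fans in the same lattice with the same arrangement, they are literally equal, $\Star(\delta) = \Star(\delta')$. Since $V(\delta)$ and $V(\delta')$ are, by the definition of the star fan, isomorphic to the toric varieties of $\Star(\delta)$ and $\Star(\delta')$, this equality yields $V(\delta) \cong X_{\Star(\delta)} = X_{\Star(\delta')} \cong V(\delta')$.

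The remaining distinctness $V(\delta) \ne V(\delta')$ (for $\delta \ne \delta'$, e.g.\ the pair $\delta,-\delta$ available since $\Sigma$ is \csym\ by Lemma \ref{AtoSigma}) follows from the orbit--cone correspondence: $V(\delta) = \overline{\orb(\delta)}$ contains $\orb(\delta)$ as its unique dense orbit, so $V(\delta)$ recovers $\delta$, and distinct cones therefore give distinct orbit closures inside $X_\Sigma$. The main obstacle is the equality of the two star fans: it is precisely strong symmetry, through Lemma \ref{AtoSigma}, that lets one reconstruct the fan from its arrangement and so replace the particular cone $\delta$ by the subspace $E$. Without this, equal spans would only give abstractly isomorphic, not identical, star fans.
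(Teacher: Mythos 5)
Your proof is correct and follows essentially the same route as the paper: the paper's own (very terse) proof also argues that the arrangement of $\Star(\delta)$ consists exactly of the images $\overline H$ of the hyperplanes $H\in\Ac(\Sigma)$ containing $E=\langle\delta\rangle_\RR$ (the content of Prop.~\ref{Udelta}, whose proof you reconstruct), and then uses strong symmetry of the star fan --- i.e.\ Lemma~\ref{AtoSigma}, that a \stsym\ fan is recovered as the intersections of closed chambers of its arrangement --- to conclude $\Star(\delta)=\Star(\delta')$, noting as you do that smoothness is nowhere needed. Your added justifications (the relative-interior-point argument for finding $\tau\supseteq\delta$ inside $H$, and the orbit--cone correspondence for $V(\delta)\ne V(\delta')$) are correct fillings-in of steps the paper leaves implicit.
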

\begin{proof}
As in Prop.\ \ref{Udelta}, $\Star(\delta)$ only depends on $\langle\delta\rangle_\RR$
because $\Star(\delta)$ is \stsym. Note that here smoothness is not used.
\end{proof}

\begin{corol}
Let $\Sigma$ be a smooth strongly symmetric fan in $\RR^r$,
$\Wc(\Ac(\Sigma))$ the corresponding Weyl groupoid, and
$\delta\in\Sigma$.
Then the Weyl groupoid $\Wc(\Ac(\Star(\delta)))$ is equivalent to a connected component of
a parabolic subgroupoid of $\Wc(\Ac(\Sigma))$.
\end{corol}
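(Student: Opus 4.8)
The plan is to reduce everything to the dictionary already established between smooth \stsym\ fans and crystallographic arrangements. By Theorem \ref{thmcor} the smooth \stsym\ fan $\Sigma$ equals $\Sigma_R$ for a crystallographic arrangement $(\Ac,R)$ with $\Ac=\Ac(\Sigma)$, and $\Wc(\Ac(\Sigma))$ is the Weyl groupoid attached to this arrangement via \cite[Prop.\ 4.5]{p-C10}. First I would fix $\delta\in\Sigma$, put $E:=\langle\delta\rangle_\RR$, and observe that $\Star(\delta)$ is again a smooth \stsym\ fan: it is \stsym\ by Lemma \ref{stsymstar}, and it is smooth because the star of a cone in a smooth fan is smooth (the images under $\kappa$ of the $\ZZ$-basis vectors of a smooth $\sigma\supseteq\delta$ not lying in $\delta$ extend to a $\ZZ$-basis of $N(\delta)$). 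Applying Theorem \ref{thmcor} to $\Star(\delta)$ then yields a crystallographic arrangement whose underlying hyperplane arrangement is $\Ac(\Star(\delta))$.

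Next I would identify this arrangement explicitly. By Proposition \ref{Udelta} the fan $\Star(\delta)$ corresponds to the crystallographic arrangement $(\Ac_E,R_E)$, where $R_E=R\cap E^\perp$ and $\Ac_E=\{\overline{\alpha^\perp}\subseteq V/E\mid\alpha\in R_E\}$; in particular $\Ac(\Star(\delta))=\Ac_E$. Since the Weyl groupoid of a crystallographic arrangement is determined by the arrangement (again \cite[Prop.\ 4.5]{p-C10}), this gives the identification $\Wc(\Ac(\Star(\delta)))=\Wc(\Ac_E)$, so the problem is now to locate $\Wc(\Ac_E)$ inside $\Wc(\Ac(\Sigma))$.

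For the last step I would invoke \cite[Cor.\ 2.5]{p-CH09c}, by which the restricted root set $R_E=R\cap E^\perp$ is precisely the set of real roots of a parabolic subgroupoid of $\Wc(\Ac(\Sigma))$ in the sense of \cite[Def.\ 2.3]{p-HW-10}. Because $(\Ac_E,R_E)$ is itself a crystallographic arrangement, its Weyl groupoid $\Wc(\Ac_E)$ is connected, whereas a parabolic subgroupoid records all objects (chambers) compatible with the restriction to $R_E$ and hence may split into several components. The conclusion is therefore that $\Wc(\Ac_E)$ is equivalent to the connected component of that parabolic subgroupoid containing the object corresponding to a chamber $K$ with $\delta\subseteq\overline K$, which is exactly the assertion of the corollary.

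The main obstacle will be this final matching: reconciling the combinatorial/toric description of $\Star(\delta)$ and $\Ac_E$ with the representation-theoretic definition of a parabolic subgroupoid and its real roots. Everything hinges on \cite[Cor.\ 2.5]{p-CH09c}, which supplies exactly the identification of $R_E$ with the root system of a parabolic subgroupoid; granting this, the connectedness of $\Wc(\Ac_E)$ forces the restriction to a single connected component, and the equivalence of groupoids follows from the equivalence between crystallographic arrangements and connected Weyl groupoids.
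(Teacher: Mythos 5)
Your proof is correct and follows essentially the same route as the paper, whose (largely implicit) argument consists precisely of the identification $\Ac(\Star(\delta))=\Ac_E$ with $R_E=R\cap E^\perp$ from Prop.~\ref{Udelta}, the citation \cite[Cor.~2.5]{p-CH09c} identifying $R_E$ as the real roots of a parabolic subgroupoid of $\Wc(\Ac(\Sigma))$, and the observation that the connected Weyl groupoid of the crystallographic arrangement $(\Ac_E,R_E)$ must therefore be a connected component of that parabolic subgroupoid. The only inessential difference is the order of the steps: you first verify that $\Star(\delta)$ is smooth and strongly symmetric and invoke Theorem~\ref{thmcor}, whereas the paper obtains crystallographicity of $(\Ac_E,R_E)$ directly from the parabolic-subgroupoid citation and only then matches this arrangement with $\Star(\delta)$.
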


\subsection{Associated toric arrangements}\label{torarr}

Let as before $\Sigma$ be the fan of a crystallographic arrangement $(\Ac,R)$
and as in \cite[Def.\ 2.1]{OT} let $L(\Ac)$ be the poset of nonempty
intersections of elements of $\Ac$.
By Lemma \ref{lemrestr}, for any $E\in L(\Ac)$
we are given the \stsym\ smooth subfan
\[ \Sigma^E = \{\sigma\cap E\mid \sigma\in\Sigma\}
= \{\sigma\in\Sigma\mid \sigma\subseteq E\} \]
of $\Sigma$.
Let $X^E$ denote its toric variety. The inclusion
$\iota : N^E=N\cap E\hookrightarrow N$ is then a sublattice and compatible with the fans
$\Sigma^E$ and $\Sigma$ and induces a toric morphism
\[ f^E : X^E \rightarrow X(\Ac)=X_\Sigma. \]
\begin{lemma}\label{Eimm}
The map $f^E$ is a closed immersion with image $Y^E\subseteq X(\Ac)$
of dimension $\dim E$.
\end{lemma}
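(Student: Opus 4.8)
```latex
The plan is to show that $f^E$ is a closed immersion by reducing to the case
of a single maximal cone and using the affine criterion for closed immersions
of toric varieties, exactly in the spirit of the proof of
Prop.\ \ref{closed_imm}.
First I would record that $\Sigma^E$ is indeed a fan in the lattice
$N^E=N\cap E$ with $N^E$ of rank $\dim E$: this is precisely the content of
Lemma \ref{lemrestr}, which also tells us $\Sigma^E$ is smooth and \stsym.
Thus $X^E=X_{\Sigma^E}$ is a smooth toric variety of dimension $\dim E$, and
the toric morphism $f^E$ is well-defined because the map of lattices
$\iota:N^E\hookrightarrow N$ is compatible with the fans (every cone of
$\Sigma^E$ is literally a cone of $\Sigma$).

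Next I would verify the \emph{split} injectivity of $\iota$ at the lattice
level. Since $E$ is spanned by cones of $\Sigma$ and $\Sigma$ is smooth, one
can choose a $\ZZ$-basis of $N^E$ and extend it to a $\ZZ$-basis of $N$; this
is the same splitting argument used at the start of the proof of
Lemma \ref{lemrestr}. Consequently the dual surjection $M\twoheadrightarrow
M^E:=\Hom(N^E,\ZZ)$ admits a section, and $N/N^E$ is torsion free. This is the
analogue of the matrix computation preceding Prop.\ \ref{closed_imm}, and it
guarantees that $f^E$ behaves well on orbits.

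The main step is the affine-local closed immersion criterion. Fix a maximal
cone $\sigma\in\Sigma^E$ of dimension $\dim E$; since $\Sigma$ is smooth,
$\sigma$ is also a cone of $\Sigma$, and I must relate the affine chart
$U_\sigma\subseteq X^E$ to a chart of $X_\Sigma$. Here I would pick a maximal
cone $\widehat\sigma\in\Sigma(r)$ containing $\sigma$ as a face; then
$\sigma=\widehat\sigma\cap E$, and the inclusion $\sigma\subseteq\widehat\sigma$
dualizes to a surjection of the dual-cone semigroup algebras
$\CC[\widehat\sigma^\vee\cap M]\rightarrow
\CC[\sigma^\vee\cap M^E]$, because every lattice point of $\sigma^\vee\cap M^E$
lifts (via the section of $M\to M^E$) to a lattice point of
$\widehat\sigma^\vee\cap M$. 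This surjectivity of $\CC$-algebras gives a closed
embedding $U_\sigma\hookrightarrow U_{\widehat\sigma}$, just as in
Prop.\ \ref{closed_imm}. I expect this surjectivity --- ensuring that the
splitting of lattices really yields a \emph{surjection} of the relevant
monoids, not merely an inclusion --- to be the main technical obstacle,
since one must check the dual cone $\sigma^\vee$ inside $M^E$ is the image of
$\widehat\sigma^\vee$ under $M\to M^E$.

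Finally I would globalize. Because the charts $U_{\widehat\sigma}$ cover
$X_\Sigma$ and the local pieces $U_\sigma\hookrightarrow U_{\widehat\sigma}$
glue compatibly (the transition data being induced by $\iota$), the
morphism $f^E$ is a closed immersion onto its image $Y^E$. Injectivity on all
torus orbits follows from the split injectivity of $\iota$: the induced map
$\orb(\sigma)\to\orb(\widehat\sigma)$ is injective for each $\sigma\in\Sigma^E$,
exactly as the orbit-injectivity argument concluding
Prop.\ \ref{closed_imm}. That $Y^E$ has dimension $\dim E$ is immediate since
$X^E$ does and $f^E$ is a closed immersion. This completes the plan.
```
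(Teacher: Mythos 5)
Your local step is sound, and it is essentially the paper's local step in a different chart: for a maximal cone $\sigma\in\Sigma^E$ contained in a maximal cone $\widehat\sigma\in\Sigma(r)$, smoothness of $\widehat\sigma$ lets you choose a $\ZZ$-basis of $N$ generating $\widehat\sigma$ whose first $\dim E$ members generate $\sigma$ (hence form a $\ZZ$-basis of $N^E$), and then the restriction map $\widehat\sigma^\vee\cap M\to\sigma^\vee\cap M^E$ is surjective, giving a closed embedding $U^E_\sigma\hookrightarrow U_{\widehat\sigma}$ whose source is indeed the full preimage of $U_{\widehat\sigma}$. (The paper instead maps $U^E_\sigma$ into the chart $U_\sigma\subseteq X_\Sigma$ of the \emph{same} cone $\sigma$ viewed in $N$, using surjectivity of $\iota^*:M\cap\sigma^\vee\to M^E\cap\sigma^\vee$; either local statement works.)

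The genuine gap is in your globalization. The criterion ``closed immersion is local on the target'' requires verifying the property over an open cover of \emph{all} of $X_\Sigma$, restricted to full preimages. The charts you have actually analyzed are only those $U_{\widehat\sigma}$ with $\dim(\widehat\sigma\cap E)=\dim E$, and these do not cover $X_\Sigma$: in general there exist maximal cones with $\widehat\sigma\cap E=\{0\}$ (for instance, in the fan of $\tilde\PP_2$, i.e.\ the arrangement of type $A_2$ with lines spanned by $(1,0),(0,1),(1,1)$, take $E=\RR(1,1)$ and $\widehat\sigma=\langle(0,1),(-1,0)\rangle_{\RR_{\ge0}}$). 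These unhandled charts cannot be ignored, because the image meets every one of them: $f^E(T^E)\subseteq T\subseteq U_{\widehat\sigma}$ for \emph{every} maximal $\widehat\sigma$, and the full preimage $U^E_{\widehat\sigma\cap E}$ is nonempty. Over such a chart you have shown neither that $f^E$ is an immersion nor that its image is closed, and without this the conclusion does not follow: a map that is a closed immersion into some non-covering open set $W\subseteq X_\Sigma$ is in general only a locally closed immersion (compare $\CC^*\to\CC^2$, $t\mapsto(t,0)$, which is a closed immersion into the chart $\CC^*\times\CC$ but not into $\CC^2$). The missing ingredient is exactly what the paper uses to globalize: $\Sigma^E$ is a complete fan in $E$ (Lemma \ref{lemrestr}), so $X^E$ is complete and $f^E$ is proper; hence $f^E(X^E)$ is closed in $X_\Sigma$, and the paper concludes as in Prop.\ \ref{closed_imm} (via Cor.\ \ref{f_inj}, $X^E\to f^E(X^E)$ is the normalization of the closed image, which the local charts exhibit as smooth, together with injectivity on orbits). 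The cheapest repair of your argument is: your charts show $f^E$ is a closed immersion into the open set $W=\bigcup U_{\widehat\sigma}$ over the good $\widehat\sigma$, which contains the image; properness makes the image closed in $X_\Sigma$; and an immersion with closed image is a closed immersion. (Also, a small slip: the orbit maps induced by $f^E$ are $\orb^E(\tau)\to\orb(\tau)$ for $\tau\in\Sigma^E$, not maps $\orb(\sigma)\to\orb(\widehat\sigma)$; orbits of distinct cones are disjoint.)
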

\begin{proof}
The subspace $E$ is spanned by any cone $\tau\in\Sigma^E$ of
maximal dimension $s:=\dim E$. Using a $\ZZ$-basis of $\tau$
as in the proof of Lemma \ref{lemrestr} one finds that
$N^E=N\cap E$ is a sublattice of $N$ of rank $s$
and that the inclusion $\iota : N^E\hookrightarrow N$ is split.
The induced map $\iota_\RR$ sends a cone $\sigma$ to itself
and thus gives rise to a proper toric morphism $f^E$.
Let $M^E$ be the dual lattice of $N^E$ and $\sigma\in \Sigma^E$.
Using the duals of bases of $N^E$ and $N$, one finds that the induced
dual map $\iota^*:M\cap \sigma^\vee \rightarrow M^E\cap\sigma^\vee$
is surjective. Then
\[ f^E|_{U^E_\sigma} : U^E_\sigma \rightarrow U_\sigma \]
is a closed immersion, where $U^E_\sigma\subseteq X^E$ and
$U_\sigma\subseteq X_\Sigma$ denote the open affine spectra defined
by $M^E\cap\sigma^\vee$ resp.\ $M\cap\sigma^\vee$.
As in the proof of Prop.\ \ref{closed_imm} we conclude that $f^E$
is globally a closed immersion.
\end{proof}

\begin{remar}
Note that $Y^E$ is not invariant under the torus action on $X_\Sigma$
but is a \stsym\ smooth toric variety on its own with torus $T^E=N^E\otimes\CC^*\subseteq T$.
\end{remar}

\begin{propo}\label{tor_arr}
With the above notation the subvarieties $Y^E\subseteq X_\Sigma$ have the
following properties.
\begin{enumerate}[(i)]
\item Each $Y^E$, $E\in L(\Ac)$, is invariant under the involution of $X_\Sigma$
defined by the central symmetry of $\Sigma$.
\item For each cone $\sigma\in\Sigma$,
\[ Y^E\cap\orb(\sigma) =
\begin{cases} \orb^E(\sigma) & \text{if } \sigma\subseteq E,\\
\emptyset & \text{if } \sigma\not\subseteq E,\end{cases}\]
and
\[ Y^E\cap V(\sigma) =
\begin{cases} V^E(\sigma) & \text{if } \sigma\subseteq E,\\
\emptyset & \text{if } \sigma\not\subseteq E,\end{cases}\]
where $\orb^E(\sigma)$ resp.\ $V^E(\sigma)$ denote the images of the
orbit of $\sigma$ resp.\ its closure in $X^E$.
\item When $F,E\in L(\Ac)$ with $F\subseteq E$, then
the composition $X^F\hookrightarrow X^E\hookrightarrow X_\Sigma$
is the inclusion $X^F\hookrightarrow X_\Sigma$.
\item\label{posiso} For any $E,F\in L(\Ac)$, $Y^{E\cap F}=Y^E\cap Y^F$.
\item The intersections $Y^E\cap T$ of $Y^E$ with the torus $T$ of $X_\Sigma$ are
the subtori $T^E=N^E\otimes\CC^*$ of $T$ of dimension $\dim(E)$ and constitute a toric arrangement.
\end{enumerate}
\end{propo}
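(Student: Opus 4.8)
The strategy is to prove the five assertions of Proposition~\ref{tor_arr} largely in the order listed, leaning heavily on the local description of the closed immersion $f^E$ established in Lemma~\ref{Eimm} and on the orbit--cone correspondence. The unifying principle is that each $Y^E$ is assembled from the affine charts $U_\sigma$ for $\sigma\in\Sigma^E$, on each of which $f^E|_{U^E_\sigma}\colon U^E_\sigma\to U_\sigma$ is the closed immersion dual to the surjection $M\cap\sigma^\vee\twoheadrightarrow M^E\cap\sigma^\vee$. I will translate each set-theoretic or functorial statement about the fans $\Sigma^E$, $\Sigma$ into the corresponding statement about the $Y^E$ via this chart-wise picture.

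For part (i), I would observe that by Lemma~\ref{AtoSigma} the fan $\Sigma$ is centrally symmetric, so the involution $n\mapsto -n$ on $N$ descends to an involution $\iota_c$ of $X_\Sigma$ permuting the $\orb(\sigma)$ according to $\sigma\mapsto-\sigma$; since $E$ is a linear subspace, $\sigma\subseteq E$ iff $-\sigma\subseteq E$, so $\Sigma^E$ is itself centrally symmetric and $\iota_c$ preserves the union of charts defining $Y^E$. Part (ii) is the orbit--cone correspondence applied to the sublattice $N^E\hookrightarrow N$: because $f^E$ sends a cone $\sigma\in\Sigma^E$ to itself and is a closed immersion, $\orb^E(\sigma)$ maps isomorphically onto $\orb(\sigma)$ and lands in $Y^E$; conversely a point of $Y^E$ lies in some chart $U_\sigma$, $\sigma\in\Sigma^E$, whence it cannot meet $\orb(\tau)$ for $\tau\not\subseteq E$. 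The statement for $V(\sigma)=\overline{\orb(\sigma)}$ follows by taking closures inside $Y^E$, using that $V^E(\sigma)$ is exactly the orbit closure computed in $X^E$.

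For part (iii), the point is purely lattice-theoretic: for $F\subseteq E$ one has $N^F=N\cap F=(N\cap E)\cap F=N^E\cap F$, and the two splittings are compatible, so the composite of the inclusions of character lattices $M\twoheadrightarrow M^E\twoheadrightarrow M^F$ agrees with the direct surjection $M\twoheadrightarrow M^F$; dualizing chart by chart gives the claimed factorization of closed immersions. Part (iv) is the part I expect to require the most care and is the \emph{main obstacle}: the inclusion $Y^{E\cap F}\subseteq Y^E\cap Y^F$ is immediate from~(iii) since $E\cap F\subseteq E,F$, but the reverse inclusion requires showing that a point lying in both $Y^E$ and $Y^F$ actually lies in $Y^{E\cap F}$. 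Here I would argue torus-orbit by torus-orbit using~(ii): a point $x\in Y^E\cap Y^F$ lies in a unique $\orb(\sigma)$, and by~(ii) applied to both $E$ and $F$ we get $\sigma\subseteq E$ and $\sigma\subseteq F$, hence $\sigma\subseteq E\cap F$; it then remains to check that the \emph{subtorus} coordinates match, i.e.\ that $x\in\orb^{E\cap F}(\sigma)$, which reduces to the statement that the images of the subtori $T^E$ and $T^F$ meet exactly in $T^{E\cap F}$ inside $\orb(\sigma)$. This last equality of subtori is the genuinely substantive point and is most cleanly handled by part~(v).

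Finally, for part (v) I would specialize~(ii) to $\sigma=\{0\}$, giving $Y^E\cap T=\orb^E(\{0\})=T^E=N^E\otimes\CC^*$, a subtorus of $T=N\otimes\CC^*$ of dimension $\dim E$; that these form a toric arrangement in $T$ is then the assertion that $E\mapsto T^E$ is an order-preserving, intersection-respecting correspondence from $L(\Ac)$ to the subtori of $T$, which follows from~(iv) restricted to the torus together with the lattice identity $N^{E\cap F}=N^E\cap N^F$. I would prove this lattice identity directly --- $N\cap(E\cap F)=(N\cap E)\cap(N\cap F)$ is clear --- and note it also supplies the missing subtorus equality needed to close the gap in~(iv), so in practice I would prove the lattice identity first and use it in both~(iv) and~(v).
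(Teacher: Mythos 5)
Your arguments for (i)--(iii) and for the identification $Y^E\cap T=T^E$ in (v) are correct and essentially the paper's own: everything is checked orbit by orbit using that $\iota_\RR$ fixes each cone of $\Sigma^E$, so that $f^E$ carries $\orb^E(\sigma)$ into $\orb(\sigma)$, while a point of $Y^E$ lies in a chart $U_\sigma$ with $\sigma\in\Sigma^E$ and therefore meets no orbit $\orb(\tau)$ with $\tau\not\subseteq E$. Two small points: in (ii) you write that $\orb^E(\sigma)$ ``maps isomorphically onto $\orb(\sigma)$''; it maps isomorphically onto a subtorus of $\orb(\sigma)$ of dimension $\dim E-\dim\sigma$, which is what you actually use. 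Also, for the toric-arrangement claim in (v) the paper does not route through (iv): it suffices, for the definition of De Concini--Procesi, that each $Y^E\cap T$ is a subtorus, which is (ii) with $\sigma=\{0\}$.

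The genuine gap is in (iv), exactly at the step you yourself single out as substantive. You propose to deduce the subtorus equality $T^E\cap T^F=T^{E\cap F}$ from the lattice identity $N^E\cap N^F=N^{E\cap F}$. This implication is false, because $-\otimes\CC^*$ does not commute with intersections: for saturated sublattices $N_1,N_2\subseteq N$, the intersection of the subtori $N_1\otimes\CC^*$ and $N_2\otimes\CC^*$ inside $T$ is a diagonalizable group whose identity component is $(N_1\cap N_2)\otimes\CC^*$, but which is disconnected whenever $N_1+N_2$ fails to be saturated in $N$. This failure occurs for crystallographic arrangements within the scope of the proposition: in type $B_2$ with simple roots $\alpha,\beta$, take $E=\alpha^\perp$ and $F=(\alpha+2\beta)^\perp$. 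In the basis of $N$ dual to $(\alpha,\beta)$ one has $N^E=\ZZ(0,1)$ and $N^F=\ZZ(2,-1)$, so $N^E\cap N^F=0=N^{E\cap F}$, while $N^E+N^F$ has index $2$ in $N$; correspondingly $T^E=\{(1,t)\}$ and $T^F=\{(t^2,t^{-1})\}$ in $T\cong(\CC^*)^2$, and $T^E\cap T^F=\{(1,1),(1,-1)\}$ strictly contains $T^{E\cap F}=\{(1,1)\}$. Thus the lattice identity cannot supply the missing equality, and the ``intersection-respecting'' half of your (v), which you base on (iv), falls with it. You should be aware that you have hit a real difficulty rather than a routine verification: the paper's own proof of (iv) reduces to $F=H$ a hyperplane and concludes from a dimension count that $Y^E\cap Y^H\cap\orb(\sigma)$ is a \emph{subtorus} of dimension $s-1-\dim\sigma$, hence equal to the subtorus $Y^{E\cap H}\cap\orb(\sigma)$ of the same dimension; but that count pins down only the identity component, and the connectedness tacitly used there is precisely what the $B_2$ computation above contradicts. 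Any complete argument must control these torsion components, i.e.\ must show that $N^E+N^F$ is saturated in $N$ --- and the example shows this is not automatic.
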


\begin{defin}
We call the system $\{Y^E\}_{E\in L(\Ac)}$ the associated \emph{toric arrangement}
of the strongly symmetric smooth toric variety $X(\Ac)$.
\end{defin}

\begin{remar}
Prop.\ \ref{tor_arr} (\ref{posiso}) shows that the assignment $E\mapsto Y^E$ is
an isomorphism of posets.
\end{remar}

\begin{remar}\label{georem}
Prop.\ \ref{tor_arr} yields a geometric interpretation of the strong
symmetry of $X(\Ac)$ by its toric arrangement:
For any hyperplane $H\in\Ac$ the union of the curves $V(\tau)$, $\tau\subseteq H$,
$\dim(\tau)=r-1$, is the set of fixed points of $X(\Ac)$ under the action of the subtorus
$T^H=N^H\otimes\CC^*=Y^H\cap T$ of $T$. This union meets the hypersurface
$Y^H$ exactly in the set of its fixed points under the action of its torus $T^H$,
and does not meet any other $Y^{H'}$.

The same holds for any $E\in L(\Ac)$ for $Y^E$ and the varieties $V(\tau)$,
$\tau\subseteq E$, $\dim(\tau)=\dim E$, inside any other $Y^F$, $E\subseteq F\in L(\Ac)$.
\end{remar}

\begin{proof}
(i) follows from the fact that $f^E$ is induced by the map
$\iota$ between strongly symmetric fans.

(ii) follows from the orbit decompositions of $Y^E$ and $V(\sigma)$ and the
fact that $f^E$ maps $\orb^E(\sigma)$ into $\orb(\sigma)$, because
$\iota_\RR(\sigma)=\sigma$ for $\sigma\in\Sigma^E$.
If $\sigma\not\subseteq E$, no $\orb^E(\tau)$, $\tau\subseteq E$, can
meet $\orb(\sigma)$. If $\sigma\subseteq E$, $\orb^E(\sigma)=\orb(\sigma)\cap Y^E$.

(iii) follows directly from the definition of the morphisms $f^E$.

(iv) It is sufficient to assume that $F$ is a hyperplane $H\in\Ac$ with
$E\not\subseteq H$. Let $s=\dim E$. Then $\dim Y^{E\cap H}=s-1$ and
$Y^{E\cap H}\subseteq Y^E\cap Y^H$. Suppose that there is a point
$x\in Y^E\cap Y^H$ and $x\notin Y^{E\cap H}$. Let then $\sigma\in\Sigma$
be a maximal cone with $x\in\orb(\sigma)$.
Then $Y^{E\cap H}\cap\orb(\sigma)\subsetneq Y^E\cap Y^H\cap \orb(\sigma)$.
By property (ii), $\sigma\subseteq E\cap H$ and
\[ Y^{E\cap H}\cap\orb(\sigma)=\orb^{E\cap H}(\sigma),\quad
Y^E\cap\orb(\sigma) = \orb^E(\sigma),\]
\[ Y^H\cap\orb(\sigma) = \orb^H(\sigma) \]
are subtori of $\orb(\sigma)$ of dimensions $s-1-\dim(\sigma)$,
$s-\dim(\sigma)$, $r-1-\dim(\sigma)$ and
$\dim(\orb(\sigma))=r-\dim(\sigma)$.
It follows that $Y^E\cap Y^H\cap \orb(\sigma)$ is a subtorus
of dimension $s-1-\dim(\sigma)$ , too.
Hence $Y^{E\cap H}\cap\orb(\sigma)= Y^E\cap Y^H\cap \orb(\sigma)$,
contradiction.

(v) follows from (ii) for the special case $T=\orb(\{0\})$. Then
the definition of a toric arrangement as in \cite{CP05} is satisfied.
\end{proof}

Property (ii) of Prop.\ \ref{tor_arr} also includes that the
intersections $Y^E\cap V(\sigma)$ are smooth, irreducible and
proper of dimension $\dim E-\dim \sigma$. Moreover, we have the

\begin{propo}With the above notation:
\begin{enumerate}
\item For any fixed orbit closure $V(\tau)\subseteq X(\Ac)$
the intersections $Y^E\cap V(\tau)$, $\tau\subseteq E$
constitute the toric arrangement $\{Y^{E/\langle\tau\rangle_\RR}\}$
of the variety $V(\tau)$ corresponding to the crystallographic
arrangement $\Ac_D$, $D=\langle\tau\rangle_\RR$ with
fan $\Star(\tau)$ as in Prop.\ \ref{Udelta}.
\item\label{p68_2} The intersections $Y^E\cap\orb(\tau)$, $\tau\subseteq E$,
form a toric arrangement of subtori in each orbit $\orb(\tau)$ of $X(\Ac)$.
\end{enumerate}
\end{propo}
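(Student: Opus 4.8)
The plan is to prove (1) first and then obtain (2) by restricting to the open dense torus $\orb(\tau)\subseteq V(\tau)$. The whole content of (1) reduces to a single fan-theoretic identity, so I would begin by translating the geometry into the language of star fans and restrictions. First I would invoke Prop.\ \ref{tor_arr}\,(ii): for $\tau\subseteq E$ the intersection $Y^E\cap V(\tau)$ equals $V^E(\tau)$, the image in $X_\Sigma$ of the orbit closure of $\tau$ computed inside $X^E$. Since $f^E$ is a closed immersion (Lemma \ref{Eimm}), this is isomorphic, as an embedded toric variety, to the orbit closure of $\tau$ in $X^E$, which is the toric variety of the star fan $\Star_{\Sigma^E}(\tau)$ of $\tau$ inside $\Sigma^E$. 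On the other side, $V(\tau)$ is the toric variety of $\Star(\tau)$, which by Prop.\ \ref{Udelta} is exactly the fan of the crystallographic arrangement $\Ac_D$, $D=\langle\tau\rangle_\RR$; and the member $Y^{E/\langle\tau\rangle_\RR}$ of its associated toric arrangement is, by definition, the toric variety of the restricted fan $\Star(\tau)^{E/D}$. Thus (1) amounts to the claim
\[ \Star_{\Sigma^E}(\tau)=\Star(\tau)^{E/D},\qquad D=\langle\tau\rangle_\RR, \]
together with compatibility of the two embeddings into $V(\tau)$.

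The heart of the matter is a ``diamond'' computation for the projection $\kappa\colon V\to V/D$. I would verify the two identities
\[ \kappa(\sigma)\cap(E/D)=\kappa(\sigma\cap E) \quad\text{and}\quad \kappa(N\cap E)=\kappa(N)\cap(E/D), \]
valid for every cone $\sigma\in\Sigma$ and the subspace $E\in L(\Ac)$ with $D\subseteq E$. Both follow from $D\subseteq E$: any point with image in $E/D$ has its whole $\kappa$-fibre in $D\subseteq E$, so the point itself lies in $E$. Granting these, the cones of $\Star_{\Sigma^E}(\tau)$, namely the $\kappa(\eta)$ with $\tau\subseteq\eta\in\Sigma^E$, coincide with the cones $\kappa(\sigma)\cap(E/D)$, $\tau\subseteq\sigma\in\Sigma$, of $\Star(\tau)^{E/D}$ (here Lemma \ref{lemrestr} guarantees $\sigma\cap E\in\Sigma^E$), and the second identity matches the ambient lattices $\kappa(N\cap E)$ and $\kappa(N)\cap(E/D)$. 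This yields the fan identity with matching lattices, whence $Y^E\cap V(\tau)=Y^{E/\langle\tau\rangle_\RR}$ inside $V(\tau)$.

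To complete (1) I would note that, by Prop.\ \ref{Udelta}, the assignment $E\mapsto E/D$ is a poset bijection between $\{E\in L(\Ac)\mid\tau\subseteq E\}$ and $L(\Ac_D)$, so as $E$ ranges over the former the sets $Y^E\cap V(\tau)$ range over the entire associated toric arrangement $\{Y^{F}\}_{F\in L(\Ac_D)}$ of $V(\tau)$. Finally, (2) is the restriction of (1) to the open dense torus $\orb(\tau)\subseteq V(\tau)$: by Prop.\ \ref{tor_arr}\,(ii) one has $Y^E\cap\orb(\tau)=\orb^E(\tau)$, a subtorus, and applying Prop.\ \ref{tor_arr}\,(v) to the variety $V(\tau)$ with its arrangement $\Ac_D$ shows that the subtori $Y^{E/D}\cap\orb(\tau)$ form a toric arrangement in $\orb(\tau)$.

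I expect the main obstacle to be not the fan identity itself but the bookkeeping that makes the two descriptions of $V(\tau)$ agree as \emph{embedded} toric varieties: one must check that the closed immersion $Y^{E/\langle\tau\rangle_\RR}\hookrightarrow V(\tau)$ coming from the arrangement $\Ac_D$ is genuinely the same as the one obtained by intersecting $Y^E$ with $V(\tau)$ inside $X_\Sigma$. This is precisely what the lattice identity $\kappa(N\cap E)=\kappa(N)\cap(E/D)$ is designed to guarantee, but carrying it correctly through the orbit decomposition — so that the toric structure on $V(\tau)$ from $\Star(\tau)$ and the restriction morphisms $X^E\to X_\Sigma$ are compatible — is where the care is needed.
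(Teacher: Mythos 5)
Your proposal is correct and follows essentially the same route as the paper: its proof also passes to $\Star(\tau)$ and identifies the orbits $\orb^E(\sigma)$, $\tau\subseteq\sigma\subseteq E$, with the orbits of $X_{\Star(\tau)^{\overline E}}$, $\overline E=E/\langle\tau\rangle_\RR$, which is exactly your fan identity $\Star_{\Sigma^E}(\tau)=\Star(\tau)^{E/D}$ read through the orbit--cone correspondence, and it likewise deduces part (2) from Prop.~\ref{tor_arr}\,(v) applied to $V(\tau)$. Your explicit ``diamond'' identities $\kappa(\sigma)\cap(E/D)=\kappa(\sigma\cap E)$ and $\kappa(N\cap E)=\kappa(N)\cap(E/D)$ merely spell out the compatibility of embeddings that the paper asserts more tersely.
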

\begin{proof}
Let $D=\langle\tau\rangle_\RR\subseteq E$ and $\overline{E}=E/D$.
Under the isomorphism $X_{\Star(\tau)}\cong V(\tau)$ an orbit
$\orb(\sigma)\subseteq V(\tau)$, $\tau\subseteq\sigma$, is
identified with the orbit $\orb(\overline{\sigma})$ with
$\overline{\sigma}\subseteq V/D$ the image of $\sigma$.
Likewise, an orbit $\orb^E(\sigma)$ in $X^E$ with
$\tau\subseteq\sigma\subseteq E$ can be identified
with the orbit $\orb^{\overline{E}}(\overline{\sigma})$ in the
variety $X_{\Star(\tau)^{\overline{E}}}\cong V^E(\tau)$ in $X^E$.
It follows that the embeddings $X^E\hookrightarrow X(\Ac)$
and $X_{\Star(\tau)^{\overline{E}}}\hookrightarrow X_{\Star(\tau)}=V(\tau)$
are compatible and thus that $Y^E\cap V(\tau)$ is the image of the latter.

(\ref{p68_2}) follows from (v) of Prop.\ \ref{tor_arr} since $\orb(\tau)$
is the torus of $V(\tau)$.
\end{proof}

\begin{examp}\label{exsurfarr}
The system $\{Y^E\}_{E\in L(\Ac)}$ for \stsym\ toric
surfaces has the following special features (see Fig.\ \ref{exsurf}).
Here each $E$ is a line of $\Ac$.
\begin{enumerate}
\item For $\rho\subseteq E$, $Y^E\cap D_\rho=\orb^E(\rho)$
is a point $p_\rho\in\orb(\rho)$.\vspace{3pt}
\item $Y^E\smallsetminus(D_\rho\cup D_{-\rho})$ is the torus
$T^E\cong k^\ast$ of $Y^E$.\vspace{3pt}
\item $Y^E\cap D_{\rho'}=\emptyset$ for $\rho'\not\subseteq E$.\vspace{3pt}
\item $Y^E\cap Y^F=\{1\}\subseteq T$ for any $E,F\in L(\Ac)$.
\end{enumerate}
Notice here that all the divisors $D_\rho$ and $Y^E$ are
isomorphic to $\PP_1$ and that the intersections are transversal.
\begin{figure}
\begin{center}
\setlength{\unitlength}{0.4pt}
\begin{picture}(400,400)(120,500)
{{\moveto(305.48109,872.03413262)
\curveto(344.13249,828.35887262)(391.38165,788.06130262)(528.87255,783.21584262)
\strokepath}}
{{\moveto(462.9317,839.73657262)
\curveto(438.02479,765.57269262)(489.88627,677.88020262)(544.02133,665.33414262)
\strokepath}}
{{\moveto(536.94694,706.50914262)
\curveto(510.76744,691.83490262)(458.88784,635.55443262)(480.42621,546.36706262)
\strokepath}}
{{\moveto(166.87073,819.55060262)
\curveto(198.75795,772.44999262)(173.03507,693.05182262)(122.46158,645.95121262)
\strokepath}}
{{\moveto(115.73293,702.47194262)
\curveto(193.79234,656.19396262)(183.22661,606.96180262)(154.75914,557.13292262)
\strokepath}}
{{\moveto(127.84451,598.85060262)
\curveto(215.96429,600.21721262)(268.90654,563.34737262)(274.52927,475.04328262)
\strokepath}}
{{\moveto(458.8945,884.14572262)\lineto(180.32805,481.77194262)\strokepath}}
{{\moveto(100.92988,614.99938262)\lineto(592.12194,763.02987262)\strokepath}}
{{\moveto(109.00427,754.95548262)\lineto(558.47865,594.81340262)\strokepath}}
{{\put(598.12194,753.02987262){$Y^E$}}}
{{\put(553.72265625,651.55078125){$D_{\rho}$}}}
{{\put(122.203125,538.1640625){$D_{-\rho}$}\strokepath}}
\end{picture}
\end{center}
\caption{Example \ref{exsurfarr}\label{exsurf}}
\end{figure}
\end{examp}

There is an interesting formula for the divisor classes of the curves $Y^E$
in terms of the toric divisors $D_\rho$ as follows.
Keeping the notation of Section \ref{ranktwo}, let $a_1,\ldots,a_{2t}$ be
a chosen order of the weights of the circular graph of the surface $X(\Ac)$ with
corresponding divisors $D_1,\ldots,D_{2t}$, and let $Y_1=Y^E$ in case $E:=\langle n_1\rangle_\RR$.

Then the standard sequence
$0\rightarrow M\rightarrow \ZZ^{\Sigma(1)}\rightarrow\Pic X(\Ac)\rightarrow 0$
can be represented by the exact sequence
\[ 0\longrightarrow \ZZ^2 \stackrel{(Q,-Q)}{\longrightarrow}\ZZ^t\oplus\ZZ^t
 \stackrel{\tiny\begin{pmatrix}A&I\\0&I\end{pmatrix}}{\longrightarrow}\ZZ^{t-2}\oplus\ZZ^t \longrightarrow 0 \]
where $Q^\vee=(n_1,\ldots,n_t)$ is the matrix of the first $t$ primitive elements
and $A^\vee$ is the matrix
\[ A^\vee = \begin{pmatrix}
a_1 & 1 & & & -1 \\
1 & a_2 & \ddots & & \\
  & \ddots & \ddots & \ddots & \\
   &  & \ddots & \ddots & 1 \\
-1 &  & & 1 & a_t
\end{pmatrix}\]
of rank $t-2$ expressing the relations $n_{j-1}+a_jn_j+n_{j+1}=0$.
To deduce the formula for $Y_1$ we choose $n_1,n_t$ as the basis of the
lattice $N$. Then
\[ Q^\vee=\begin{pmatrix}1&x_2&\cdots&x_{t-1}&0\\0&y_2&\cdots&y_{t-1}&1\end{pmatrix} \]
and $y_2=1$ since $A\cdot Q=0$.
\begin{propo}
With the above notation,
\begin{equation}\label{relD}
Y_1\sim D_2+\sum_{\nu=3}^{t-1} y_\nu D_\nu+D_t
\sim D_{t+2}+\sum_{\nu=3}^{t-1} y_\nu D_{\nu+2}+D_{2t}
\end{equation}
up to rational equivalence and $Y_1$ has selfintersection $Y_1^2=0$.
\end{propo}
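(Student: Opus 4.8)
The plan is to exhibit $Y_1$ as a fibre of an explicit toric fibration $\pi\colon X(\Ac)\to\PP_1$ and to read off both the class $[Y_1]$ and the selfintersection $Y_1^2$ from this structure. Let $m_1,m_t\in M$ be the basis dual to the chosen basis $n_1,n_t$ of $N$ and put $w:=\chi^{m_t}$. Then $\langle m_t,n_\rho\rangle$ is the second coordinate $y_\rho$ of the primitive generator $n_\rho$ in the basis $(n_1,n_t)$, so $\operatorname{div}(w)=\sum_{\rho}y_\rho D_\rho$. Since $E=\langle n_1\rangle_\RR$ is the coordinate axis $\{y=0\}$, the subtorus $T^E=Y_1\cap T$ of Prop.~\ref{tor_arr}(v) is precisely the kernel $\{w=1\}$ of $w\colon T\to\CC^*$; hence $Y_1=\overline{\{w=1\}}$.

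First I would construct $\pi$. The lattice projection $N\to\ZZ$, $n\mapsto\langle m_t,n\rangle$, sends every $2$-cone $\langle n_j,n_{j+1}\rangle$ of $\Sigma$ into one of the cones $\{0\}$, $\RR_{\ge 0}$, $\RR_{\le 0}$ of the fan of $\PP_1$, and therefore defines a toric morphism $\pi\colon X(\Ac)\to\PP_1$ whose comorphism extends $w$. For $c\in\CC^*$ the fibre $\pi^{-1}(c)=\overline{\{w=c\}}$ is a translate of $Y_1$; thus $Y_1=\pi^{-1}(1)$ is a fibre of $\pi$, and since two distinct fibres of a morphism onto a curve are disjoint, $Y_1^2=0$.

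For the class, note $w=\pi^* s$ for the standard coordinate $s$ on $\CC^*\subseteq\PP_1$, so $\operatorname{div}(w)=\pi^*([0]-[\infty])$. Comparing with $\operatorname{div}(w)=\sum_\rho y_\rho D_\rho$ and using that $\pi^*[0]$ and $\pi^*[\infty]$ have disjoint supports, the positive part $\sum_{y_\rho>0}y_\rho D_\rho$ equals $\pi^*[0]$ and the negative part $\sum_{y_\rho<0}(-y_\rho)D_\rho$ equals $\pi^*[\infty]$. As all points of $\PP_1$ are linearly equivalent and $[Y_1]=\pi^*[1]$, both parts represent $[Y_1]$. It remains to identify the summands: by the counterclockwise ordering the generators $n_2,\dots,n_t$ lie in the open upper half plane, so $y_\rho>0$ there, while $y_1=0$; central symmetry $n_{t+j}=-n_j$ gives $y_{t+j}=-y_j$, so the rays with $y_\rho<0$ are exactly $n_{t+2},\dots,n_{2t}$. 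Inserting $y_2=y_t=1$, the positive part yields $Y_1\sim D_2+\sum_{\nu=3}^{t-1}y_\nu D_\nu+D_t$, and applying the central symmetry $D_j\mapsto D_{j+t}$ to the negative part gives the second expression of (\ref{relD}).

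The technical heart is the verification that the projection is a map of fans, i.e.\ that the sign of $y_\rho$ is constant on the generators of each $2$-cone; this is exactly where the counterclockwise ordering (placing $n_2,\dots,n_{t-1}$ strictly inside the first quadrant) and the central symmetry of $\Sigma$ enter, the two axis rays $n_1,n_{t+1}$ with $y=0$ separating the upper from the lower half. Alternatively, one could bypass $\pi$ and compute $\operatorname{div}(w-1)=Y_1+\sum_{y_\rho<0}y_\rho D_\rho\sim 0$ directly, evaluating the order of vanishing of $w-1=\chi^{m_t}-1$ along each $D_\rho$ (it is $0$ when $y_\rho\ge 0$ and $y_\rho$ when $y_\rho<0$); this gives the negative-part formula at once and the positive-part formula after adding $\operatorname{div}(w)\sim0$, while $Y_1^2=0$ follows since the two representatives have no common, and no adjacent, components.
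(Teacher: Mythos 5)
Your proof is correct, and it takes a genuinely different route from the paper's. The paper argues inside the intersection ring: writing $Y_1\sim\sum_\nu c_\nu D_\nu$, it uses the intersection numbers $D_\nu^2=a_\nu$, $D_\mu D_\nu\in\{0,1\}$ and $Y_1D_\nu=1$ exactly for $\nu\in\{1,t+1\}$ to set up a linear system for the coefficients, solved (using the relations among the weights $a_1,\ldots,a_t$ from Section \ref{ranktwo}) uniquely modulo the lattice relations $(Q,-Q)$; the vanishing $Y_1^2=0$ is then read off from the second equivalence, whose support is disjoint from and non-adjacent to that of the first. You instead obtain both representatives as divisors of the characters $w=\chi^{m_t}$ and $w-1$, packaged geometrically as the fibres $\pi^*[0]$, $\pi^*[\infty]$, $\pi^*[1]=Y_1$ of the toric morphism $\pi\colon X(\Ac)\to\PP_1$ induced by $n\mapsto\langle m_t,n\rangle$; linear equivalence then comes for free from $[0]\sim[1]\sim[\infty]$ on $\PP_1$, and $Y_1^2=0$ from disjointness of fibres. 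The one point that genuinely needs care in your approach --- that the sign of $y_\rho$ is constant on the generators of each $2$-cone, so that $\pi$ is a map of fans --- you address correctly via the counterclockwise ordering and the central symmetry $n_{t+j}=-n_j$. What your route buys: it avoids the linear system and the weight relations entirely, and it explains $Y_1^2=0$ conceptually by exhibiting $Y_1$ as a fibre. What the paper's route buys: it stays within the $\Pic$-presentation given by the displayed exact sequence, which is what ties the coefficients $y_\nu$ directly to the positive-root interpretation in the subsequent remark. One discrepancy worth flagging: your derivation yields the second representative $D_{t+2}+\sum_{\nu=3}^{t-1}y_\nu D_{\nu+t}+D_{2t}$, the centrally symmetric image of the first, whereas (\ref{relD}) literally has $D_{\nu+2}$; the printed index appears to be a typo for $\nu+t$ (for general $t$ the divisor with indices $\nu+2$ has the wrong intersection numbers with, e.g., $D_5$, and its support is not the centrally symmetric one), so your formula is the intended statement, and your argument in fact corrects it.
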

\begin{remar}
Choosing $n_1$, $n_t$ as a basis, the columns of $Q^\vee$
become the positive roots of the associated Weyl groupoid at the
object corresponding to $Y_1$.
\end{remar}
The formula for the other $Y_\nu=Y^E$, $n_\nu\in E$, follows
by cyclic permutation of the indices. Note that the classes of
$D_2,\ldots,D_t$ are part of a basis of $\Pic X(\Ac)$.
The formula can be derived as follows. If $Y_1$ is equivalent
to $\sum c_\nu D_\nu$,
the intersection numbers $D_\nu^2=a_\nu$, $D_\mu D_\nu\in\{0,1\}$
for $\mu\ne\nu$ and
\[ Y_1 D_\nu =\begin{cases}1 & \nu\in\{1,t+1\}\\ 0 & \text{else} \end{cases} \]
yield a system of equations for the coefficients $c_2,\ldots,c_{2t}$.
This system has a unique solution modulo $(Q,-Q)$ such that $c_1=0$, $c_2=1$,
which is
\[ (c_2,\ldots,c_{2t})=(y_2,\ldots,y_{t-1},1,0,\ldots,0) \text{ mod } (Q,-Q). \]
For that one has to use the relations between the weights $a_1,\ldots,a_t$,
see Section \ref{ranktwo}.
The proof for $Y_1^2=0$ follows from the second equivalence of Equation \ref{relD}.
\begin{remar}
The relations between the weights $a_1,\ldots,a_{2t}$ naturally lead to the
Grassmanian and to cluster algebras of type $A$, see \cite{p-CH09d}
for more details.
\end{remar}

\section{Further remarks}

\subsection{Reducibility}
An arrangement $(\Ac,V)$ is called \emph{reducible} if there exist
arrangements $(\Ac_1,V_1)$ and $(\Ac_2,V_2)$ such that $V=V_1\oplus V_2$ and
\[ \Ac = \Ac_1\times\Ac_2:=\{H\oplus V_2\mid H\in \Ac_1\}
\cup \{V_1\oplus H\mid H\in \Ac_2\}, \]
compare \cite[Def.\ 2.15]{OT}.
It is easy to see that a crystallographic arrangement $(\Ac,V)$ is reducible
if and only if the corresponding Cartan scheme is reducible
in the sense of \cite[Def.\,4.3]{p-CH09a}, i.e.\ the generalized
Cartan matrices are decomposable.
For the fan $\Sigma$ corresponding to $\Ac$, reducibility translates to the fact
that there are fans $\Sigma_1$ and $\Sigma_2$ such that
\[ \Sigma = \Sigma_1\times \Sigma_2 =\{\sigma\times\tau \mid \sigma\in\Sigma_1,\tau\in\Sigma_2\}. \]
Notice that by Lemma \ref{lemrestr} the fans $\Sigma_1$ and $\Sigma_2$ are \stsym\ and smooth as well.

\subsection{Inserting one hyperplane and blowups}

In higher dimension, the situation is much more complicated.
There are only finitely many crystallographic arrangements for each
rank $r>2$. Whether the insertion of new hyperplanes corresponds to
a series of blowing-ups is unclear. The case of a single new hyperplane
may be explained in the following way:

\begin{propo}
Let $(\Ac,R)$ and $(\Ac',R')$ be crystallographic arrangements of rank $r$ with $\Ac'=\Ac\dot\cup\{H\}$.
Then the toric morphism $X_{\Sigma'}\rightarrow X_{\Sigma}$ induced by the subdivision
is a blowup along two-dimensional torus invariant subvarieties of $X_{\Sigma}$.
\end{propo}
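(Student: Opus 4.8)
The plan is to analyze the refinement $\Sigma'\to\Sigma$ cone by cone and to recognize it as a simultaneous star subdivision, after which the toric dictionary between star subdivisions and blowups finishes the job. Since $\Ac'=\Ac\,\dot\cup\,\{H\}$, the fan $\Sigma'$ arises from $\Sigma$ by slicing each cone with $H$: a maximal cone $\sigma\in\Sigma(r)$ whose interior avoids $H$ survives unchanged, while a maximal cone whose interior meets $H$ is cut into the two halves $\sigma\cap\{\ell\ge0\}$ and $\sigma\cap\{\ell\le0\}$, where $\ell\in V^*$ defines $H$. By Def.\ \ref{def:stsym} and Lemma \ref{stsymHi} the hyperplane $H$ is a union of cones of $\Sigma'(r-1)$, so these halves are exactly the chambers of the simplicial arrangement $\Ac'$ that meet $H$. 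First I would record this description and isolate the cones that actually change.

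The heart of the argument is the local picture in one maximal cone $\sigma=\langle\rho_1,\dots,\rho_r\rangle$ whose interior meets $H$. Writing $P,N,Z$ for the index sets on which $\ell(\rho_i)$ is positive, negative, resp.\ zero, the two halves are simplicial cones (being chambers of $\Ac'$), and their extreme rays are the surviving $\rho_i$ together with one new ray on each edge $\langle\rho_i,\rho_j\rangle$ with $i\in P$, $j\in N$. Counting rays forces $|P|\cdot|N|=|N|$ from one half and, symmetrically, $|P|\cdot|N|=|P|$ from the other, so $|P|=|N|=1$: the form $\ell$ crosses exactly one edge $\gamma_\sigma=\langle\rho_1,\rho_2\rangle$ of $\sigma$ and introduces a single new ray $\rho_0=H\cap\langle\rho_1,\rho_2\rangle_\RR$, splitting $\sigma$ into $\langle\rho_1,\rho_0,\rho_3,\dots,\rho_r\rangle$ and $\langle\rho_0,\rho_2,\rho_3,\dots,\rho_r\rangle$. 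I expect this counting step to be the main obstacle, as it is what rules out more complicated subdivisions and uses the simplicial hypothesis essentially. Now smoothness enters: writing $\rho_0=a\rho_1+b\rho_2$ in the $\ZZ$-basis $\rho_1,\dots,\rho_r$ of $N$, smoothness of the two halves means the relevant determinants $\pm a$, $\pm b$ are units, so $\rho_0=\rho_1+\rho_2$. Thus locally the cut of $\sigma$ by $H$ is precisely the star subdivision of $\sigma$ at its two-dimensional face $\gamma_\sigma$.

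To globalize, I would collect the two-dimensional cones $\gamma=\langle\rho,\rho'\rangle\in\Sigma(2)$ whose relative interior meets $H$ (equivalently, whose two rays $\ell$ strictly separates); by the previous paragraph the ray attached to such a crossed edge is $\RR_{\ge0}(\rho+\rho')$. The same sign count, applied inside a maximal cone containing them, shows that no cone of $\Sigma$ contains two crossed edges: two disjoint crossed edges would force $|P|,|N|\ge2$, and two adjacent ones would force $|P|\ge2$ or $|N|\ge2$. Hence for distinct crossed edges $\gamma,\gamma'$ there is no cone of $\Sigma$ having both as faces, so the orbit closures $V(\gamma)$ and $V(\gamma')$ are disjoint; moreover every cone of $\Sigma$ containing a crossed edge $\gamma$ is subdivided only across $\gamma$. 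Therefore $\Sigma'$ is exactly the simultaneous star subdivision of $\Sigma$ along the pairwise non-adjacent family of crossed edges.

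Finally I would invoke the toric dictionary: the star subdivision of a smooth fan at a smooth two-dimensional cone $\gamma=\langle\rho,\rho'\rangle$, inserting the ray $\RR_{\ge0}(\rho+\rho')$, induces the blowup of $X_\Sigma$ along the orbit closure $V(\gamma)$ (see \cite{CLS}). Because the centers $V(\gamma)$ are pairwise disjoint, carrying these out simultaneously is a single blowup of $X_\Sigma$ along the disjoint union $\bigsqcup_{\gamma}V(\gamma)$, whose fan is $\Sigma'$; this identifies the subdivision morphism $X_{\Sigma'}\to X_\Sigma$ with that blowup. Each center $V(\gamma)$ is the torus invariant subvariety attached to a two-dimensional cone $\gamma$, of codimension two in $X_\Sigma$, and by Lemma \ref{lem1} and Thm.\ \ref{thmcor} both $X_\Sigma$ and $X_{\Sigma'}$ are smooth, so centers and total space are smooth, as required.
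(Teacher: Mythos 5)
Your argument is correct and shares its skeleton with the paper's proof: both reduce to showing that each maximal cone $\sigma$ whose interior meets $H$ is star subdivided at a single two-dimensional face, with the new ray generated by the sum of the two primitive generators, and then invoke the star-subdivision/blowup dictionary. Where you differ is in the two key steps. First, for the relation $\rho_0=\rho_1+\rho_2$ the paper restricts $\Ac'$ to the plane $\langle\rho_1,\rho_2\rangle_\RR$, uses Cor.~\ref{restrU} to see that this restriction is a rank-two crystallographic arrangement in which $\rho_1,\rho_0,\rho_2$ are consecutive rays, and reads the relation off the rank-two discussion of Section~\ref{ranktwo}; you get it directly from smoothness of the two halves by a determinant computation in the $\ZZ$-basis $\rho_1,\ldots,\rho_r$. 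Your route is more elementary and avoids Cor.~\ref{restrU} entirely, at the (shared) cost of tacitly assuming that $\Sigma$ and $\Sigma'$ are smooth with respect to one common lattice $N$ --- an identification the paper also makes implicitly when it speaks of ``the toric morphism induced by the subdivision''. Second, your bookkeeping with the sign partition $P,N,Z$ is a sharper version of the paper's count $|\sigma_1'(1)\cup\sigma_2'(1)|=r+1$, and it pays off: it shows that no cone of $\Sigma$ contains two crossed edges, hence that the centers $V(\gamma)$ are pairwise disjoint, so the simultaneous star subdivisions assemble into a single blowup along their disjoint union. The paper stops at the local statement that $H$ star subdivides each maximal cone and leaves this globalization implicit, so on that point your proof is the more complete of the two.
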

\begin{proof}
Let $\sigma\in\Sigma:=\Sigma_R$ be a maximal cone with $H\cap\stackrel{\circ}{\sigma} \ne\emptyset$.
We prove that $H$ star subdivides $\sigma$.
The hyperplane $H$ divides $\sigma$ into two parts $\sigma_1'$ and $\sigma_2'$ which
intersect in a codimension one cone $\tau'$.
Note that $|\sigma(1)|=r$, $|\sigma_1'(1)\cup\sigma_2'(1)|=r+1$,
thus there is exactly one ray $\rho'$ involved which is not in $\Sigma$.
Let $\rho_1\subseteq\sigma_1',\rho_2\subseteq\sigma_2'$ be the rays which are not
subsets of $\tau'$, and $\tau\subseteq\sigma$ be the cone generated by $\rho_1,\rho_2$.
Then $H\cap\tau=\rho'$.
But by Cor.\ \ref{restrU}, $\Ac'^{\langle\tau\rangle_\RR}$ is a crystallographic arrangement in which
$\langle\rho_1\rangle_\RR$, $\langle\rho'\rangle_\RR$, $\langle\rho_2\rangle_\RR$
are subsequent hyperplanes. By \ref{ranktwo} we obtain that $\rho$ is generated by the
sum of the generators of $\rho_1'$, $\rho_2'$.
\end{proof}

\subsection{Automorphisms}

Let $\Sigma$ be a \stsym\ smooth fan,
$(\Ac,R)$ the corresponding crystallographic arrangement.
\begin{defin} If $\Ac$ comes from the connected simply connected Cartan scheme
$\Cc =\Cc (I,A,(\rho_i)_{i\in I},(C^a)_{a\in A})$, and $a\in A$, then we call
\[ \Aut(\Cc,a):=\{w \in \Hom(a,b) \mid b\in A,\:\: R^a=R^b \} \]
the \emph{automorphism group of $\Cc$ at $a$}. This is a finite subgroup
of $\Aut(\ZZ^r)\cong\Aut(M)$ because the number of all morphisms is finite.
\end{defin}
Since $\Cc$ is connected, $\Aut(\Cc,a)\cong \Aut(\Cc,b)$ for all $a,b\in A$.
The choice of $a\in A$ corresponds to the choice of a chamber and thus of
an isomorphism $\ZZ^r\cong M$.
Every element of $\Aut(\Cc,a)$ clearly induces a toric automorphism of $\Sigma$.
The groups $\Aut(\Cc,a)$ have been determined in \cite{p-CH10}, see \cite[Thm.\ 3.18]{p-CH10}
and \cite[A.3]{p-CH10}.
However, sometimes there are elements of $\Aut(\Sigma)$ which are not induced by an element
of $\Aut(\Cc,a)$.
For example, we always have the toric automorphism
\[ N\rightarrow N, \quad v\mapsto -v, \]
but there is a sporadic Cartan scheme of rank three with trivial automorphism group.


\begin{thebibliography}{DCP05}

\bibitem[AHS10]{p-AHS-08}
N.~Andruskiewitsch, I.~Heckenberger, and H.-J. Schneider, \emph{The {N}ichols
  algebra of a semisimple {Y}etter-{D}rinfeld module}, Amer. J. Math.
  \textbf{132} (2010), no.~6, 1493--1547.

\bibitem[BC10]{BC10}
M.~Barakat and M.~Cuntz, \emph{Coxeter and crystallographic arrangements are
  inductively free},
  \href{http://arxiv.org/abs/1011.4228}{\texttt{http://arxiv.org/abs/1011.4228%
}} (2010), 22 pp.

\bibitem[BJ08]{BJ08}
M.~Brion and R.~Joshua, \emph{Equivariant {C}how ring and {C}hern classes of
  wonderful symmetric varieties of minimal rank}, Transform. Groups \textbf{13}
  (2008), no.~3-4, 471--493.

\bibitem[CH09a]{p-CH09b}
M.~Cuntz and I.~Heckenberger, \emph{Weyl groupoids of rank two and continued
  fractions}, Algebra \& Number Theory \textbf{3} (2009), 317--340.

\bibitem[CH09b]{p-CH09a}
\bysame, \emph{Weyl groupoids with at most three objects}, J.~Pure
  Appl.~Algebra \textbf{213} (2009), no.~6, 1112--1128.

\bibitem[CH10a]{p-CH10}
\bysame, \emph{Finite {W}eyl groupoids},
  \href{http://arxiv.org/abs/1008.5291}{\texttt{arXiv:1008.5291v1}} (2010), 35
  pp.

\bibitem[CH10b]{p-CH09c}
\bysame, \emph{Finite {W}eyl groupoids of rank three}, to appear in Trans.
  Amer. Math. Soc. (2010).

\bibitem[CH11]{p-CH09d}
\bysame, \emph{Reflection groupoids of rank two and cluster algebras of type
  ${A}$}, J. Combin. Theory Ser. A \textbf{118} (2011), no.~4, 1350--1363.

\bibitem[CLS11]{CLS}
David~A. Cox, John~B. Little, and Henry~K. Schenck, \emph{Toric varieties},
  \href{http://www.cs.amherst.edu/~dac/toric.html}{\tt
  http://www.cs.amherst.edu/~dac/toric.html}, 2011.

\bibitem[Cun11]{p-C10}
M.~Cuntz, \emph{Crystallographic arrangements: Weyl groupoids and simplicial
  arrangements}, Bull. London Math. Soc. \textbf{43} (2011), no.~4, 734--744.

\bibitem[DCP83]{CP83}
C.~De~Concini and C.~Procesi, \emph{Complete symmetric varieties}, Invariant
  theory ({M}ontecatini, 1982), Lecture Notes in Math., vol. 996, Springer,
  Berlin, 1983, pp.~1--44.

\bibitem[DCP05]{CP05}
\bysame, \emph{On the geometry of toric arrangements}, Transform. Groups
  \textbf{10} (2005), no.~3-4, 387--422.

\bibitem[DL94]{DL94}
Igor Dolgachev and Valery Lunts, \emph{A character formula for the
  representation of a {W}eyl group in the cohomology of the associated toric
  variety}, J. Algebra \textbf{168} (1994), no.~3, 741--772.

\bibitem[Gr{\"u}09]{p-G-09}
B.~Gr{\"u}nbaum, \emph{A catalogue of simplicial arrangements in the real
  projective plane}, Ars Math.~Contemp. \textbf{2} (2009), no.~1, 25 pp.

\bibitem[Hec06]{p-H-06}
I.~Heckenberger, \emph{The {W}eyl groupoid of a {N}ichols algebra of diagonal
  type}, Invent.~Math. \textbf{164} (2006), no.~1, 175--188.

\bibitem[HS10]{p-HS-08}
I.~Heckenberger and H.-J. Schneider, \emph{Root systems and weyl groupoids for
  nichols algebras}, Proc.~London Math.~Soc. \textbf{101} (2010), no.~3,
  623--654.

\bibitem[HW10]{p-HW-10}
I.~Heckenberger and V.~Welker, \emph{Geometric combinatorics of weyl
  groupoids}, J. Algebr. Comb. \textbf{34} (2010), no.~1, 115--139.

\bibitem[Kly95]{Kly95}
A.~A. Klyachko, \emph{Toric varieties and flag varieties}, Proceedings of the
  Steklov Institute of Mathematics \textbf{208} (1995), 124--145.

\bibitem[Oda88]{Oda}
Tadao Oda, \emph{Convex bodies and algebraic geometry}, Ergebnisse der
  Mathematik und ihrer Grenzgebiete (3), vol.~15, Springer-Verlag, Berlin,
  1988, An introduction to the theory of toric varieties, Translated from the
  Japanese.

\bibitem[OT92]{OT}
P.~Orlik and H.~Terao, \emph{Arrangements of hyperplanes}, Grundlehren der
  Mathematischen Wissenschaften, vol. 300, Springer-Verlag, Berlin, 1992.

\bibitem[Pro90]{Pro90}
C.~Procesi, \emph{The toric variety associated to {W}eyl chambers}, Mots, Lang.
  Raison. Calc., Herm\`es, Paris, 1990, pp.~153--161.

\bibitem[VK85]{VK85}
V.E. Voskresenskij and A.A. Klyachko, \emph{{Toroidal Fano varieties and root
  systems}}, Math. USSR, Izv. \textbf{24} (1985), 221--244.

\end{thebibliography}
\def\cprime{$'$}
\providecommand{\bysame}{\leavevmode\hbox to3em{\hrulefill}\thinspace}
\providecommand{\MR}{\relax\ifhmode\unskip\space\fi MR }
\providecommand{\MRhref}[2]{%
  \href{http://www.ams.org/mathscinet-getitem?mr=#1}{#2}
}
\providecommand{\href}[2]{#2}

\end{document}